\documentclass{article}

\usepackage[utf8]{inputenc}
\usepackage{geometry, mathtools, amsmath, amssymb,amsfonts, amsthm, cases, thmtools, tabularx, appendix, xcolor, bbm, titling, bm}
\usepackage{soul}
\usepackage{stmaryrd}
\usepackage{hyperref}
\usepackage{bbding}
\usepackage[maxbibnames=9]{biblatex}
\usepackage{tikz,lipsum,lmodern}
\usepackage{tkz-euclide}
\usepackage{float}
\usepackage[most]{tcolorbox}
\usepackage{amsthm,latexsym,amssymb,amsmath, verbatim, empheq, enumerate,bm}
\numberwithin{equation}{section}
\newcommand{\TV}{\mathrm{TV}}
\usepackage{makebox}
\renewbibmacro{in:}{%
  \ifentrytype{article}
    {}
    {\bibstring{in}%
     \printunit{\intitlepunct}}}
\DeclareFieldFormat{pages}{#1}
\addbibresource{bib.bib}
\geometry{margin=1in}
\renewcommand{\epsilon}{\varepsilon}

\newtheorem{theorem}{Theorem}[section]
\newtheorem{definition}[theorem]{Definition}

\newtheorem{proposition}[theorem]{Proposition}
\newtheorem{lemma}[theorem]{Lemma}
\newtheorem{corollary}[theorem]{Corollary}
\newtheorem{remark}[theorem]{Remark}
\usepackage[T1]{fontenc}
\usepackage{lmodern}

\newcommand{\EZ}[1]{\textcolor{magenta}{EZ : #1}}

\newcommand{\lr}[1]{\left( #1\right)}
\usepackage[normalem]{ulem}
\newcommand{\eq}[1]{\begin{equation}
\begin{split}
#1
\end{split}
\end{equation}}

\newcommand{\ep}{\varepsilon}

\usepackage{bbding}

\DeclareMathOperator*{\esssup}{ess\,sup}
\title{Microscopic derivation of a one-dimensional lubrication model with roughness}
\date{\today}
\begin{document}
\author{A. Lefebvre-Lepot\thanks{CNRS, Univ. Paris Saclay, Univ. Paris Cité, ENS Paris Saclay, SSA, INSERM, Centre Borelli, F-91190, Gif-sur-Yvette}, M.A. Mehmood\thanks{Imperial College London, London, United Kingdom; muhammed.mehmood21@imperial.ac.uk}, C. Perrin\thanks{Aix Marseille Univ, CNRS, I2M, Marseille, France; charlotte.perrin@cnrs.fr}, E. Zatorska\thanks{Mathematics Institute, University of Warwick, Zeeman Building, Coventry CV4 7AL, United Kingdom}}

\maketitle
\begin{abstract}
We derive a hydrodynamic model for the motion of inertial particles with a spherical hard core, interacting through lubrication forces and pairwise repulsive forces. The repulsion arises from the assumption that each particle is surrounded by a thin rough layer of reduced permeability. We prove that, as the number of particles tends to infinity (and their size tends to 0), the microscopic dynamics converges to a macroscopic hydrodynamic model in which congestion effects are encoded directly into the macroscopic interaction forces, depending on a local critical density transported by the flow.  In particular, we extend the work of Lefebvre-Lepot and Maury \cite{lefebvre2008micro} where non-inertial particles, submitted to only a lubrication force were considered, and present the convergence proof when inertial effects and roughness are taken into account. 
\end{abstract}

\section{Introduction}
The dynamics of dense suspensions of rigid particles immersed in a viscous fluid have attracted sustained attention across applied mathematics, physics, and engineering. At high particle concentrations, the behaviour of such systems is governed by strong short-range interactions that arise when neighbouring particles approach one another. The leading-order hydrodynamic effect in this regime is the {\emph{lubrication}} force, which is known to become singular as the interparticle distance tends to zero. These singular interactions prevent direct contact between smooth particles and strongly influence the macroscopic behaviour of the suspension.

A  mathematical description of this phenomenon was established in \cite{hillairet2015} by Hillairet and Kelaï at the microscopic level of the grains, and a first one-dimensional hydrodynamic limit was obtained by Lefebvre-Lepot and Maury \cite{lefebvre2008micro} for a system of non-inertial smooth particles interacting solely through lubrication forces.
Their macroscopic model, derived in the limit of large particle number (small size), captures the emergence of congestion through a singularity in the density-dependent  viscosity. This derivation and postulated ``hard-congestion'' limit, corresponding to the vanishing-viscosity regime, have since inspired numerous rigorous studies of the macroscopic  model of lubrication \cite{HCL,ChaMePeZa,perrin2018one}. 


However,  real particles are rarely perfectly smooth \cite{SmartLeighton1989,VinogradovaYakubov2006}. Experimental and theoretical studies have shown that surface roughness, polymer coatings, or thin low-permeability layers may allow physical contact to occur even when lubrication forces alone would forbid it, see for instance \cite{GV2012, Mongruel13} and the references therein. 
Such microscopic irregularities generate effective repulsive forces at small distances, often modelled through short-range potentials or through minimal admissible gaps \cite{Rognon2011,Gillissen2020,wachs2023modeling}. Incorporating these effects at the microscopic scale, and understanding their impact on the hydrodynamic limit is the main motivation for this work.

We derive a macroscopic hydrodynamic model starting from a one-dimensional microscopic system of inertial rigid spheres of radius 
$\ep$, which interact through both lubrication forces and pairwise repulsive forces. The repulsive forces model the presence of a thin rough or low-permeability layer surrounding each particle, which introduces a short-range repulsive mechanism penalizing interparticle distances $d < d^\star$ where $d^\star$ denotes a pair-dependent threshold distance defined in the next section. 
In the limit as the number of particles tends to infinity and $\ep \to 0$, this microscopic constraint gives rise to a macroscopic critical volume fraction (or, in this one-dimensional context, critical density) $\rho^\star$, transported by the flow, at which congestion effects induced by roughness become significant.


Our contribution is twofold.
First, we provide a unified microscopic description that couples lubrication forces with a repulsive potential, allowing for limited overlap of the effective rough surfaces. Second, and most significantly, we prove that in the hydrodynamic limit the empirical particle density, the velocity field, and the critical density converge (in a suitable weak sense) to a continuum system of conservation laws with nonlinear singular viscosity and a ``soft-congestion'' pressure. To the best of our knowledge, this is the first rigorous hydrodynamic limit that simultaneously incorporates singular lubrication forces, repulsive rough-layer interactions, and inertial effects. 

The resulting limit system includes three coupled equations: a continuity equation for the density  $\rho$,  a momentum equation  for the velocity $u$, and a transport equation for the critical density $\rho^{\star}$:
 \begin{equation}
\label{limit-NSE-intro}
    \left\{
    \begin{aligned}
        &\partial_{t} \rho + \partial_{x}(\rho u ) = 0, \\[1ex]
       &\partial_t(\rho u) + \partial_x (\rho u^2) - \partial_{x} \left( \frac{\mu}{1- \rho} \partial_{x} u \right) 
       + \partial_{x} \left( \frac{\rho}{\rho^{\star}} \right)^{\gamma} =  \rho f,\\[1ex] &\partial_{t} \rho^{\star} + u \partial_{x} \rho^{\star} = 0.
    \end{aligned}
    \right.
\end{equation}

The first two terms in the momentum equation come from the inertia of the particles at the microscopic level,
 the third term represents the singular viscosity induced by lubrication, and the fourth term is a soft-congestion pressure penalising densities exceeding $\rho^\star$, in the spirit of \cite{Lions1999}, and induced by the repulsive forces at the microscopic level; $f$ is the external force.
 Such penalisation (/congestion) pressures are indeed commonly used in a wide range of applications, including traffic and crowd dynamics \cite{degond2018transport, HCLMehmood},
 the motion of floating objects or flows in constricted geometries \cite{godlewski2018}, tumour growth and biological tissues 
 \cite{perthame2014, DS24, DPSV, ES25}, and are also widely implemented at the microscopic level of contact mechanics~\cite{johnson1987}.

 This macroscopic model and the associated hydrodynamic limit extend the framework of Lefebvre-Lepot and Maury by incorporating both repulsive interactions and inertial effects. 
 The presence of inertia notably complicates the analysis, particularly the compactness arguments necessary to obtain the identification of the limiting nonlinear convective and pressure terms.

The remainder of the paper is organised as follows. In Section \ref{Sec:micro} we introduce the microscopic model and describe the lubrication and repulsive forces in detail. Section \ref{sec:main-results} states the main convergence theorem. Section \ref{sec:global-NSE} is devoted to the analysis of the discrete system, including global-in-time existence and uniform bounds. In Section \ref{sec:limit} we derive the PDE representation and establish compactness properties sufficient for passing to the limit.  Our results are complemented by a series of numerical simulations carried out in Section \ref{sec:numerics}. 
Lastly, in Appendix~\ref{sec:extension} we discuss an extension of our main result to the case of partially congested initial data, and in Appendix~\ref{sec:AppB}, we detail the numerical solver used in section~\ref{sec:numerics}.

\section{The microscopic dynamics}\label{Sec:micro}

We consider a distribution of $N+1$ three-dimensional  particles $P_0, ..., P_N$ suspended in a viscous fluid, moving along an axis.
The particles have solid spherical cores, all with a radius $\varepsilon$, and are characterized by their positions $(q_i)_{i=0,\dots,N}$ along the axis and their velocities $(u_i)_{i=0,\dots,N}$, see Figure~\ref{fig:micro_distrib}. 
We suppose that the solid cores do not overlap, so that the distance between them is non-negative, that is, $d_i := q_{i} - q_{i-1} -2\varepsilon \geq 0$, for $i = 1, \dots, N$.
\begin{figure}[ht]
	\centering
	\includegraphics[scale= 1.2]{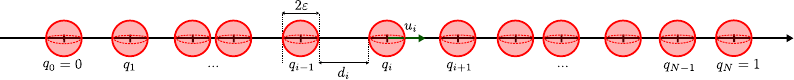}
	\caption{Discrete particles moving along the horizontal axis.}
	\label{fig:micro_distrib}
\end{figure}
We assume that the system lies in the interval $I=[0,1]$; the positions and velocities of the two extremal particles are fixed and equal to $(q_0,u_0) = (0,0)$, $(q_N, u_N) = (1,0)$, respectively. 

The positions and velocities of the other particles depend on the hydrodynamic forces, that is the interactions with the surrounding fluid.
In this system, there are a number of different physical effects that contribute to the total hydrodynamic forces exerted on the particles such as drag, lift (in the case of two dimensions) or Basset forces (which essentially arise due to temporal delay in boundary layer development as the relative velocity changes with time). In this work, we focus on the modelling of the lubrication forces which are due to short-range hydrodynamic interactions between close particles. The importance of lubrication increases with the particle volume fraction as the average distance between particles decreases and eventually becomes the dominant term in hydrodynamic interactions for dense suspensions \cite{wachs2023modeling}.

\subsection{Description of forces}
As established in \cite{cox1974motion}, when the distance between two neighbouring spheres $i$  and $j$  goes to zero ($j=i-1$ or $j=i+1$ in our configuration), the leading term in the asymptotic expansion of the lubrication force exerted on particle $i$ by the fluid in the narrow gap between the two spheres is:
\[
F_{j \to i} = -  2 \pi \nu \frac{\epsilon_{i}^2 \epsilon_j^2}{(\epsilon_{i} + \epsilon_j)^2} \frac{u_{i} - u_{j}}{d_{ij}},
\]
where $\nu$ is the viscosity of the interstitial fluid, $d_{ij}$ is the distance between the two particles, and $(\epsilon_i,u_i)$ and $(\epsilon_{j},u_j)$ are the radii and velocities of particles $i$ and $j$ respectively. Therefore, in the configuration described above, the asymptotic expansions of the two lubrication forces exerted on particle $i$ by its neighbours are
\begin{equation} \label{Fi-intro}
    F_{i-1 \to i} = -  \mu(2\epsilon)^{2} \frac{u_{i} - u_{i-1}}{d_{i}} \quad \text{and}\quad F_{i+1 \to i} = -  \mu(2\epsilon)^{2} \frac{u_{i} - u_{i+1}}{d_{i+1}}
\end{equation} where $\mu := 2 \pi \nu / 4$ is a constant introduced for convenience.

If one considers two particles at distance $d$, the lubrication force scales as $-\phi(d) \dot d$, where the primitive of the function $d\to\phi(d)=1/d$ blows up as $d\to 0$. By exploiting this property, applying Newton’s second law, and invoking the Cauchy-Lipschitz theorem, one can show that such forces prevent the contact between particles  in finite time. It is worth noting that the lubrication force places the system at the threshold of contact: any weaker force -- such as $\phi(d)=1/d^{1-\eta}$ with $\eta > 0$ -- would allow particles to come into contact.
This non-contact property is specific to smooth particles and was also obtained for the full coupled Stokes/rigid particles system~\cite{hesla2004,hillairet2007}. 
In contrast, rough (non-smooth) particles can experience contact. Experimental measurements indicate that a rough sphere behaves equivalently to a smooth sphere that is slightly smaller than the rough one \cite{SmartLeighton1989, VinogradovaYakubov2006}. Similarly, from a mathematical standpoint, it was established in~\cite{GV2012} that the lubrication force exerted on a sphere by a corrugated wall behaves as if it were exerted by a shifted smooth plane.
As a consequence, while the lubrication prevents contact between the smooth surfaces, it allows  for contact between the rough surfaces. To model this behaviour, we consider that $\epsilon$ is the radius of the equivalent smooth spheres and that each particle $P_{i}$ has a roughness radius $r_{i}$ (see Figure~\ref{fig:rough_sphere}).
\begin{figure}[ht]
	\centering
	\includegraphics[scale= 1.9]{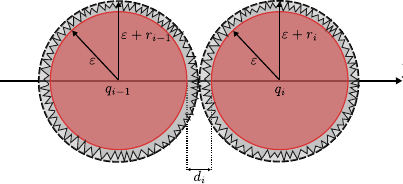}
	\caption{Two rough solid particles }
	\label{fig:rough_sphere}
\end{figure}

For each pair of neighbouring particles, $i-1$ and $i$, we introduce a critical distance $d_i^\star$ for which the rough surfaces touch:
\begin{equation}
    d_{i}^{\star} := r_{i-1} + r_{i},
\end{equation}
so that the distance between the particles should remain greater than this value,  if the rough areas are impermeable. We denote by $\mathbf{d}^{\star} = (d_{1}^{\star}, d_{2}^{\star}, ..., d_{N}^{\star})$ the set of critical distances. The non-overlapping property is relaxed by considering a collection of regularized repulsive contact forces $\mathbf{G}^{\epsilon} = (G_{0\to1}, ~... ~, G_{(N-1)\to N})$, where $G_{(i-1)\to i}$ represents the force exerted by particle $i-1$ onto particle $i$. Conversely, particle $i-1$ undergoes the force $-G_{(i-1)\to i}$ from particle $i$, where
\begin{equation} \label{Gi-intro}
G_{(i-1) \to i}= (2\epsilon)^{2}\left( \frac{d_{i}^{\star} + 2\epsilon}{d_{i} + 2\epsilon} \right)^{\gamma},
\end{equation}
for some $\gamma \ge 1$. The factor of $(2\epsilon)^{2}$ reflects the modelling assumption that the interaction force is proportional to the size of the interaction surface seen by the neighbouring particles. Note that, when $\gamma$ is given, this force is non stiff when $d_i$ goes to $d_i^\star$, so that it allows overlapping of the roughness areas ($d_i$ smaller than $d_i^\star$). This can be understood as the possibility for the rough surface to deform when they get in contact. When $\gamma$ increases, the repulsive force is more and more stiff and finally behaves as a contact force between rigid rough surfaces, imposing $d_i>d_i^\star$. Note that such a relaxed repulsive model can also be considered for repulsive colloidal particles or for active entities that would repulse its neighbours.

Finally, we consider an external force $F^{ext}_i(t)$ exerted on each particle, which is  derived from a force density $f_{3d}(t,x,y,z)$:
\[
F^{ext}_i(t)=\int_{P_i^\epsilon(t)} f_{3d}(t,x,y,z)\,dx\,dy\,dz = \int_{q_i-\epsilon}^{q_i+\epsilon} \left(\int_{D_i^\epsilon(t,x)} f_{3d}(t,x,y,z)\,dy\,dz \right)\, dx,
\]
where $P_i^\epsilon$ is the 3-dimensional domain covered by particle $i$ at time $t$ and $D_i^\epsilon(t,x)$ is a slice of this domain at abscissa $x$. The point $x$ being given, the inner integral scales as $\epsilon^2$ when $\epsilon$ goes to zero, so that  we write the following model for the external force:
\[
F^{ext}_i(t) = (2\epsilon)^2 \int_{q_i-\epsilon}^{q_i+\epsilon} f(t,x) dx,
\]
 where $f$ is given and will be assumed to be Lipschitz in space and time.
If we now define a mean linear density at time $t$ on particle $i$ by:
\begin{equation}\label{eqfi-nse}
\bar f_i^\epsilon(t) = \frac{1}{2\epsilon} \int_{q_i-\epsilon}^{q_i+\epsilon} f(t,x) \,dx
\end{equation}
we obtain
\[
F^{ext}_i(t) = (2\epsilon)^3 \bar f_i^\epsilon(t).
\]

\subsection{The microscopic model}
We now give a full mathematical description of the microscopic problem for positive times. Consider $N+1$ particles on $I = [0,1]$, each of them of radius $\epsilon$ (later, $\ep$ will depend on $N$), where initially the positions, velocities and critical distances are denoted by $ (\mathbf{q}^\epsilon_{0} , \mathbf{u}_0^\epsilon, \mathbf{d}^{\star,\epsilon }_0)$, where
\begin{equation}
    \begin{aligned}
        &\mathbf{q}_{0}^\epsilon  = (q_{0,1}^\epsilon , ..., q_{0,N-1}^\epsilon ), \\[1ex]
        &\mathbf{u}_{0}^\epsilon = (u_{0,1}^\epsilon , ..., u_{0,N-1}^\epsilon ), \\[1ex]
        &\mathbf{d}^{\star, \epsilon}_0 = (d_{0,1}^{\star,\epsilon},...,d_{0,N-1}^{\star,\epsilon}), 
    \end{aligned}
\end{equation}
and for positive times the positions, velocities and critical distances are expressed as
\begin{equation}
    \begin{aligned}
        &\mathbf{q}^\epsilon(t)  = (q_{1}^\epsilon(t) , ..., q_{N-1}^\epsilon(t) ), \\[1ex]
        &\mathbf{u}^\epsilon(t) = (u_{1}^\epsilon(t) , ..., u_{N-1}^\epsilon(t) ), \\[1ex]
        &\mathbf{d}^{\star, \epsilon}(t) = (d_{1}^{\star, \epsilon}(t),...,d_{N-1}^{\star, \epsilon}(t)) = (d_{0,1}^{\star, \epsilon},...,d_{0,N-1}^{\star, \epsilon}).
    \end{aligned}
\end{equation}
Notice that each $d^{\star, \epsilon}_{i}$ is constant in time, since it depends only on the roughness profile of the pair of particles $P_{i-1}$ and $P_i$. We assume the initial positions $\mathbf{q}_{0}^\epsilon$ are such that no two particles are in contact, i.e. that 
\begin{equation}\label{eq:distrib_init}
    d_{0,i}^\epsilon:= q_{0,i}^\epsilon - q_{0,i-1}^\epsilon  - 2\epsilon > 0
\end{equation}
for each $i$, and that the first and last particles are fixed at $x=0$ and $x=1$ respectively. Taking into account the interaction forces described above, the equation of motion at time $t$ for particle $P_{i}$, for each $i=1,..., N-1$, is given by
\begin{equation} \label{balance-orig-intro}
  m \ddot{q}_i^\epsilon(t) =  F_{(i+1) \to i}(t) + F_{(i-1) \to i}(t) + G_{(i+1) \to i}(t) + G_{(i-1)\to i}(t) + F_{i}^{ext}(t),
\end{equation}  where $m := (2\epsilon)^{3}$ is the volume of each particle.
For the sake of simplicity we will adopt the notation $G_i^\epsilon(t)=G_i^\epsilon[\mathbf{q}^\epsilon(t),\mathbf{d}^{\star,\epsilon}(t)]$, where
\begin{equation} \label{Gi-def}
    G_{i}^\epsilon(t) := (2\epsilon)^{-2}G_{(i-1) \to i}(t) = \left( \frac{d_{i}^{\star, \epsilon} + 2\epsilon}{d_{i}^\epsilon(t) + 2\epsilon} \right)^{\gamma}.
\end{equation} 
Then using the symmetry of the lubrication and interaction forces and dividing \eqref{balance-orig-intro} by $(2\epsilon)^2$, we arrive at the simplified balance of forces for particle $P_i$:
\begin{equation} \label{balance-intro}
  2\epsilon \ddot{q}_i^\epsilon(t) =  \mu\left( \frac{u_{i+1}^\epsilon(t) - u_{i}^\epsilon(t)}{d_{i+1}^\epsilon(t)} 
 - \frac{u_{i}^\epsilon(t) - u_{i-1}^\epsilon(t)}{d_{i}^\epsilon(t)} \right) + \left( G_{i}^\epsilon(t) - G_{i+1}^\epsilon(t)\right) +  2\epsilon \bar{f}_i^\epsilon(t). 
\end{equation}
It will also be useful to express the balance of forces in matrix form. We can define $\mathbb{A}(\mathbf{q}^\epsilon(t) )$ to be the $(N-1) \times (N-1)$ tri-diagonal stiffness matrix \begin{equation} \label{A-matrix}
    \mathbb{A}(\mathbf{q}^\epsilon(t) ) = \mu\begin{pmatrix}
\frac{1}{d_1^\epsilon(t)} + \frac{1}{d_2^\epsilon(t)} & -\frac{1}{d_2^\epsilon(t)}  & 0 & \cdots & 0  \\
-\frac{1}{d_2^\epsilon(t)} & \frac{1}{d_2^\epsilon(t)} + \frac{1}{d_3^\epsilon(t)} & -\frac{1}{d_3^\epsilon(t)}  & \cdots & 0  \\
0 & \ddots & \ddots & \ddots & \vdots \\
\vdots & \vdots & - \frac{1}{d_{N-2}^\epsilon(t)} & \frac{1}{d_{N-2}^\epsilon(t)} + \frac{1}{d_{N-1}^\epsilon(t)} & -\frac{1}{d_{N-2}^\epsilon(t)} \\
0 & 0 & \cdots & -\frac{1}{d_{N-1}^\epsilon(t)} & \frac{1}{d_{N-1}^\epsilon(t)} + \frac{1}{d_N^\epsilon(t)} \\
\end{pmatrix}.
\end{equation} 
We also define the vectors \begin{align} \label{b-vector}
    &\mathbf{b}(\mathbf{q}^\epsilon(t) , \mathbf{d}^{\star,\epsilon }) =(b_{i}^\epsilon(t))_{i=1}^{N-1}:= (G_{1}^\epsilon(t)  - G_{2}^\epsilon(t) , G_{2}^\epsilon(t) -G_{3}^\epsilon(t) , ..., G_{N-1}^\epsilon(t)  - G_{N}^\epsilon(t) ),\\[1ex]
    &\mathbf{f}(t,\mathbf{q}^\epsilon(t))  = 2\epsilon( \bar{f}_1^\epsilon(t), ..., \bar{f}_{N-1}^\epsilon(t)). \label{eq:bff}
\end{align}
Then the equation of motion for $P_1, ..., P_{N-1}$ \eqref{balance-intro}
 can be rewritten as
 \begin{equation}\label{balance-matrix-intro}
     2\ep \ddot{\mathbf{q}}^\epsilon(t)=-\mathbb{A}(\mathbf{q}^\epsilon(t) )\dot{\mathbf{q}}^\epsilon(t)  +    \mathbf{b}(\mathbf{q}^\epsilon(t) , \mathbf{d}^{\star,\epsilon })+\mathbf{f}(t,\mathbf{q}^\epsilon(t) ).
 \end{equation}
The initial distances $d_{0,i}^\epsilon$ being positive \eqref{eq:distrib_init}, we have by continuity of the distances that the matrix \eqref{A-matrix} is well-defined at least on a short time interval. In summary, a solution to the microscopic model is a mapping $t \mapsto(\mathbf{q}^\epsilon(t), \mathbf{u}^\epsilon(t), \mathbf{d}^{\star,\epsilon})$ that solves the ODE
 \begin{equation}
\label{ODE_NS}
    \left\{
    \begin{aligned}
       & \dot{\mathbf{q}}^\epsilon  = \mathbf{u}^\epsilon , \\[1ex]
       &2\epsilon \dot{\mathbf{u}}^\epsilon  = -\mathbb{A}(\mathbf{q}^\epsilon )\mathbf{u}^\epsilon  +    \mathbf{b}(\mathbf{q}^\epsilon , \mathbf{d}^{\star,\epsilon })+\mathbf{f}(t,\mathbf{q}^\epsilon ), \\[1ex]
        &\dot{\mathbf{d}}^{\star,\epsilon } = 0,
    \end{aligned}
    \right.
\end{equation} 
with some initial data
\begin{equation}\label{initial_NS}
    \left\{
    \begin{aligned}
    &\mathbf{q}^\epsilon(0)=\mathbf{q}_{0} = (q_{0,i}^\epsilon)_{i=1}^{N-1}, \\
    &\mathbf{u}^\epsilon(0) = \mathbf{u}_{0} = (u_{0,i}^\epsilon)_{i=1}^{N-1},\\
    &\mathbf{d}^{\star,\epsilon}(0)=\mathbf{d}^{\star}_0 = (d_{0,i}^{\star,\epsilon})_{i=1}^{N-1}.
    \end{aligned}
    \right.
 \end{equation}


 Finally, let us also note that if two particles are in contact initially (i.e. $d_{0,i}^\epsilon = 0$ for some $i=1,\ldots,N$), then one needs to re-interpret the balance of forces \eqref{balance-intro}. For now, we will not assume that any two particles can be in contact initially. This case will be discussed in the Appendix.

\section{The main convergence result} \label{sec:main-results}
Our main result states that for any well prepared initial data, we can construct weak solutions to the macroscopic system~\eqref{limit-NSE-intro} as the limit of a microscopic approximation satisfying \eqref{ODE_NS}.

\subsection{The macroscopic model} 
We are interested in the asymptotic limit $N\to\infty$ of system \eqref{ODE_NS} with initial data \eqref{initial_NS} satisfying~\eqref{eq:distrib_init}. We impose the scaling $\ep(N) \propto N^{-1}$, so that the size particles $\epsilon=\epsilon(N) \to 0$.  In this limit, we expect that a solution $(\mathbf{q}^\epsilon, \mathbf{u}^\epsilon, \mathbf{d}^{\star,\epsilon})_\epsilon$ to \eqref{ODE_NS} converges, in some sense, to a weak solution $(\rho, u, \rho^{\star})$ of the macroscopic system \eqref{limit-NSE-intro} considered on $(0,T)\times I$ and supplemented with Dirichlet boundary conditions for the velocity.

\begin{definition}  \label{def:nse-soln}
Let  $f \in W^{1,\infty}([0,T] \times I)$, $\rho_{0},\rho^\star_0 \in [L^{\infty}(I)]^2$ with $\rho_0 \in [0,1)$,  $\rho^\star_0 \in [0,1]$ and $u_0 \in H^1_0(I)$. We say that the triple $(\rho,u,\rho^\star)$ is a weak solution to problem \eqref{limit-NSE-intro}  with zero Dirichlet boundary conditions for $u$ and initial conditions
    \begin{equation}
\label{initial-semi}
    \left\{
    \begin{aligned}
    &\rho(0,\cdot) = \rho_{0},\\
    &u(0,\cdot) = u_0,\\
       &\rho^\star(0,\cdot) = \rho^\star_{0},
    \end{aligned}
    \right.
\end{equation}
if:
\begin{itemize}
\item $\rho \in [0,1),$ $\rho^{\star} \in [0,1]$ and the triple $(\rho, u, \rho^{\star})$ belongs to the following regularity class
\begin{align}
     (\rho, u, \rho^{\star})\in  C([0,T]; L^{\infty}(I)) \times L^{2}(0,T; H^{1}_{0}(I)) \times C([0,T];L^{\infty}(I)),
    \end{align} with \begin{equation}
        \rho u \in C_{weak}([0,T]; L^2(I));
    \end{equation}
    \item the continuity equation 
    
\begin{equation} \label{WFcontinuity}
   \int_0^T \!\!\!\int_I \rho \partial_{t}\phi(t,x)\,dx\,dt + \int_{I} \rho_{0}(x)\phi(0,x)\,dx + \int_0^T \!\!\!\int_I \rho u \partial_{x}\phi(t,x)\,dx\,dt = 0, 
\end{equation}
   holds for all $\phi \in C^1_c([0,T)\times \bar I)$;
   \item  the momentum equation
 \begin{equation} \label{WFmom}
 \begin{aligned}
    &  \int_0^T \!\!\!\int_I \rho  u \partial_t \phi(t,x)\,dx\,dt+\int_I  \rho_0 u_0(x)\phi(0,x)\,dx  + \int_0^T \!\!\!\int_I \rho u^2 \partial_x \phi (t,x)\,dxds  \\
     &\qquad-  \int_0^T \!\!\!\int_{I} \frac{\mu}{1-\rho} \partial_{x}u ~\partial_{x}\phi (t,x)\,dx\,dt + \int_0^T \!\!\!\int_{I} \left(\frac{\rho}{\rho^{\star}}  \right)^{\gamma} \partial_{x}\phi (t,x)\,dx\,dt = -\int_0^T \!\!\!\int_{I}\rho f \phi (t,x)\,dx\,dt, \end{aligned}
    \end{equation}
holds for all $\phi \in C^1_c([0,T)\times I)$;
   \item the transport equation 
   \begin{equation} \label{WFtransport}
   \int_0^T \!\!\!\int_I \rho^{\star} \partial_{t}\phi(t,x)\,dx\,dt + \int_{I} \rho^{\star}_{0}(x)\phi(0,x)\,dx + \int_0^T \!\!\!\int_I \rho^{\star} \partial_{x}(u \phi)(t,x)\,dx\,dt = 0, 
\end{equation}
holds for all $ \phi \in C^1_c([0,T)\times \bar I)$.
\end{itemize}
\end{definition}

Before stating the theorem more precisely, we first define the auxiliary approximate functions connecting the discrete quantities $(\mathbf{q}^\epsilon, \mathbf{u}^{\epsilon}, \mathbf{d}^{\star,\epsilon})_{\epsilon>0}$ to their macroscopic limits $(\rho, u, \rho^{\star})$. 

\subsection{\texorpdfstring{Approximation of initial data $(\rho_0, u_0, \rho^{\star}_0)$}{Approximation of initial data (rho0, u0, rho*0)}}
First, given the initial data of the microscopic system $(\mathbf{q}_0^\epsilon, \mathbf{u}_0^\epsilon, \mathbf{d}_0^{*,\epsilon})$, the approximate initial density $\rho_0^\epsilon(x)=\rho_0^\epsilon[\mathbf{q}_0^\epsilon]$ and critical density $\rho_0^{\star, \epsilon}(x) = \rho_0^{\star, \epsilon}[\mathbf{q}_0^\epsilon,\mathbf{d}^{\star,\epsilon}_0]$ are defined as (see Figure \ref{fig:rho-ss})
 \begin{align}
                &\rho^\epsilon_0(x) := \sum_{i=1}^{N}\frac{2\epsilon}{d_{0,i}^\epsilon+2\epsilon} \mathbf{1}_{[q_{0,i-1}^\epsilon, q_{0, i}^\epsilon)}(x),  \label{ID-rho0-NSE} \\[1ex] 
                &\rho_0^{\star,\epsilon}(x) := \sum_{i=1}^{N}\frac{2\epsilon}{d_{0,i}^{\star,\epsilon}+2\epsilon}\mathbf{1}_{[q_{0,i-1}^\epsilon, q_{0, i}^\epsilon)}(x).  \label{ID-rho0st-NSE}
\end{align}
 The macroscopic representation of the initial velocities $\mathbf{u}_0^\epsilon$ is denoted by $u_0^\epsilon = u_0^\epsilon[\mathbf{q}^\epsilon_0,\mathbf{u}^\epsilon_0]$, and defined as (see Figure \ref{fig:u-ss})
\begin{equation} \label{ID-u0-NSE}
            u^\epsilon_0(x) :=  \sum_{i=1}^{N} \left[u_{0,i-1}^\epsilon + \left( \frac{u_{0,i}^\epsilon - u_{0,i-1}^\epsilon}{q_{0,i}^\epsilon - q_{0,i-1}^\epsilon} \right)(x-q_{0,i-1}^\epsilon) \right] \mathbf{1}_{[q_{0,i-1}^\epsilon, q_{0,i}^\epsilon)}(x).
\end{equation}

\subsection{\texorpdfstring{Approximation of solutions $(\rho, u, \rho^{\star})$}{Approximation of solutions (rho, u, rho*) to the microscopic model for t≥0}}

We assume that $f$ is Lipschitz in both variables and $d_{0,i}^\epsilon>0$~\eqref{eq:distrib_init}, and seek a solution to the ODE \eqref{ODE_NS}. Note that such a solution $t \mapsto \mathbf{q}^\epsilon(t)$ is necessarily continuous and so the distances $d_i(t)$ must be positive on some short time interval, meaning that the matrix \eqref{A-matrix} is well-defined at least for short time. Therefore we can safely apply the Cauchy-Lipschitz theorem to \eqref{ODE_NS} to obtain the existence of $T^\star_\epsilon > 0$ such that we have a unique solution $ t \mapsto (\mathbf{q}^\epsilon(t), \mathbf{u}^\epsilon(t), \mathbf{d}^{\star, \epsilon}(t))_{\epsilon > 0}$ on $(0,T^\star_\ep)$, where
 \begin{equation}
     \begin{aligned}
         &\mathbf{q}^\epsilon(t) := (q_{1}^\epsilon(t), ..., q_{N-1}^\epsilon(t)), \\[1ex]
         &\mathbf{u}^{\epsilon}(t) := (u_1^\epsilon(t), ..., u_{N-1}^\epsilon(t)),
     \end{aligned}
 \end{equation} 
 and $\mathbf{d}^{\star,\epsilon}(t) = \mathbf{d}^{\star,\epsilon}_0$, since $\dot{\mathbf{d}}^{\star,\epsilon}(t) = 0$ from \eqref{ODE_NS}. 
 Then we can extend the definitions of $\rho_0^\epsilon$ and $\rho_0^{\star,\epsilon}$ to positive times. 
 The density $\rho^\epsilon = \rho^{\epsilon}[\mathbf{q}^{\epsilon}]$ is defined as
  \begin{align} \label{rho-NSE}
    \rho^{\epsilon}(t,x) := \sum_{i=1}^{N} \rho_{i}^{\epsilon}(t) \mathbf{1}_{[q_{i-1}^{\epsilon}(t), q_{i}^{\epsilon}(t))}(x), \hspace{30pt} \rho_{i}^{\epsilon}(t) := 1 - \frac{d_{i}^{\epsilon}(t)}{q_{i}^{\epsilon}(t)-q_{i-1}^{\epsilon}(t)} = \frac{2\epsilon}{d_{i}^{\epsilon}(t) + 2\epsilon},
\end{align} 
where the distances $t \mapsto d_i^{\epsilon}(t)$ are defined as 
\begin{equation} \label{def_di}
    d_i^{\epsilon}(t) := q_{i}^{\epsilon}(t) - q_{i-1}^{\epsilon}(t)-2\epsilon,\quad \text{for each }i=1,...,N-1,
\end{equation}
and similarly for the critical density $\rho^{\star,\epsilon} = \rho^{\star, \epsilon}[\mathbf{q}^{\epsilon}, \mathbf{d}^{\star,\epsilon}_0]$,
\begin{align} \label{rhost-NSE}
    \rho^{\star, \epsilon}(t,x) := \sum_{i=1}^{N} \rho_{i}^{\star, \epsilon}(t) \mathbf{1}_{[q_{i-1}^{\epsilon}(t), q_{i}^{\epsilon}(t))}(x), \hspace{30pt} \rho_{i}^{\star, \epsilon}:=  \frac{2\epsilon}{d_{0,i}^{\star,\epsilon} + 2\epsilon}.
\end{align}
\begin{figure}[ht]
    \centering
    \includegraphics[width=1\linewidth]{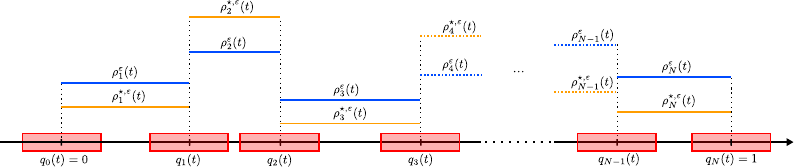}
    \caption{$\rho^\epsilon$ and $\rho^{\star,\epsilon}$ at time $t \ge0$}
    \label{fig:rho-ss}
\end{figure}
 Finally, we associate with $\mathbf{u}^{\epsilon}[\mathbf{q}^\epsilon, \mathbf{d}^{\star, \epsilon}, \mathbf{f}^\epsilon] = (u_1^\epsilon, ..., u_{N-1}^\epsilon)$   the piecewise linear function $u^\epsilon=u^\epsilon[\mathbf{q}^{\epsilon},\mathbf{u}^{\epsilon}]: [0,T_\epsilon^\star] \times I \to \mathbb{R}$  such that
 \begin{equation}  \label{u-NSE}
     u^\epsilon(t,x) = \sum_{i=1}^{N} \left[u_{i-1}^\epsilon(t) + \left( \frac{u_{i}^\epsilon(t) - u_{i-1}^\epsilon(t)}{q_{i}^\epsilon(t) - q_{i-1}^\epsilon(t)} \right)(x-q_{i-1}^\epsilon(t)) \right] \mathbf{1}_{[q_{i-1}^\epsilon(t), q_{i}^\epsilon(t))}(x).
 \end{equation}

\begin{figure}[ht]
    \centering
    \includegraphics[width=1\linewidth]{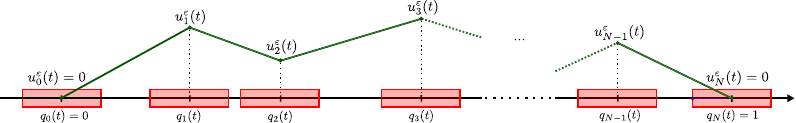}
    \caption{$u^\epsilon$  at time $t\ge0$}
    \label{fig:u-ss}
\end{figure} 

As a first step, we will prove in Section~\ref{sec:extension-NSE} that this local solution can be extended to a global one, namely that $d_i^\epsilon >0$ on any interval $(0,T)$, $T > 0$.

\subsection{The main theorem}
We are now ready to state our main result.
\begin{theorem} \label{thm:nse}
    Let $T>0$ and let $f \in W^{1,\infty}([0,T] \times I)$, $(\rho_0, \rho_0^\star) \in [W^{1,\infty}(I)]^2$ satisfying $\rho_0 \in [\delta, \overline{\rho}]$ and $\rho^\star_0 \in [\delta, 1]$ for some $0 < \delta < \overline{\rho} < 1$ be given. Moreover, suppose we have $u_0 \in H^1_0(I)$. \\
    
    1. There exists a sequence $(\mathbf{q}^{\epsilon}_{0},\mathbf{u}_0^\epsilon, \mathbf{d}^{\star, \epsilon}_0)_{\epsilon >0}$ satisfying the following properties:
    \begin{enumerate}
        \item[(i)] There exist positive constants $c_0, C_0$ independent of $\epsilon$ such that  for each $i=1,...,N$,
\eq{ \label{nse-dist-assumption}
        0 < c_0 \epsilon \le & \,\,d_{0,i}^{\epsilon} \le C_0\epsilon, \hspace{5pt} \hspace{5pt} 0 \, \leq \, d_{0,i}^{\star, \epsilon} \le C_0\epsilon,}
        and, for  each $i=1,...,N-1$,
        \eq{\label{nse-di-diff-assumption}
                    &|d_{0,i+1}^\epsilon - d_{0,i}^\epsilon| \le C_0\epsilon^2, \\[1ex]
                    &|d_{0,i+1}^{\star} - d_{0,i}^{\star}|\le C_0 \epsilon^2.
    }

        \item[(ii)] We have 
           \begin{equation} \label{nse-conv-assumption}
            \rho^{\epsilon}_0 \to \rho_0 \text{ and } \rho^{\star, \epsilon}_0 \to \rho^{\star}_0 \text{ in } L^{\infty}(I), \end{equation} where $\rho^\epsilon_0, \rho^{\star,\epsilon}_0$ are the approximate initial densities and critical densities respectively, defined by \eqref{ID-rho0-NSE} and \eqref{ID-rho0st-NSE} respectively.
        \item[(iii)] We have \begin{equation} \label{u0-NSE}
            u^\epsilon_0 \to u_0 \text{ in } C(I),
        \end{equation} where $u^\epsilon_0$ is defined by \eqref{ID-u0-NSE}
        and for each $i= 1,... ,N-1$,
        \begin{equation} \label{nse-u0-condition}
        |u_{0,i}| \le \|u_0\|_{L^\infty}. 
        \end{equation}
\end{enumerate}

 2.  There exists a unique solution $t \mapsto (\mathbf{q}^{\epsilon}(t), \mathbf{u}^\epsilon(t), \mathbf{d}^{\star, \epsilon}(t))$ to the ODE \eqref{ODE_NS}-\eqref{initial_NS} on $(0,T)$ with initial data $(\mathbf{q}_0^\epsilon, \mathbf{u}_0^\epsilon,\mathbf{d}_0^{\star,\epsilon})_\epsilon$ satisfying \eqref{nse-dist-assumption}-\eqref{nse-u0-condition} and $\mathbf{f}$ given by \eqref{eqfi-nse}, \eqref{eq:bff}.
     The macroscopic representation of this solution, $(\rho^\epsilon, u^\epsilon,\rho^{\star,\epsilon})$  given by \eqref{rho-NSE}, \eqref{rhost-NSE} and \eqref{u-NSE}, satisfies
     \begin{equation}
         \begin{aligned}
             &\|\rho^\epsilon\|_{BV((0,T) \times I)} + \|\partial_t\rho^\epsilon\|_{L^2(0,T; H^{-1}(I))} \le C(T), \\[1ex]
             &\|u^\epsilon\|_{L^2(0,T; H^1_0(I))} \le C(T), \\[1ex]
             &\|\rho^{\star,\epsilon}\|_{L^\infty(0,T; BV(I))} + \|\partial_t\rho^{\star,\epsilon}\|_{L^2(0,T; H^{-1}(I))} \le C(T).
         \end{aligned}
     \end{equation}

     3. The system \eqref{limit-NSE-intro} with zero Dirichlet boundary conditions and initial conditions $(\rho_0, u_0,
  \rho_0^\star)$ admits a weak solution $(\rho, u, \rho^\star)$ in the sense of Definition \ref{def:nse-soln}, such that  for $p \in [1,\infty)$,  up to a subsequence,
  \begin{equation}
      \begin{aligned}
                &\rho^\epsilon \to \rho \text{ strongly in }  C([0,T]; L^p(I)), \\[1ex]
          &u^{\epsilon} \rightharpoonup u ~ \text{ weakly in } L^{2}(0,T; H_0^1(I)), \\[1ex]
          &\rho^{\star,\epsilon} \to \rho^{\star} \text{ strongly in }   C ([0,T]; L^{p}(I)),
      \end{aligned}
  \end{equation}  
  where
  \begin{equation*}
      \rho, \rho^\star \in L^\infty_t BV_x \cap BV_{t,x},\quad   0 < \rho < 1, ~ 0 < \rho^\star \le 1.
  \end{equation*}
\end{theorem}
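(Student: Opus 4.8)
The plan is to carry out the hydrodynamic limit in three stages corresponding to the three parts of the statement: (1) construction of admissible discrete initial data, (2) uniform a priori estimates on the discrete solutions rewritten in PDE form, and (3) compactness and passage to the limit. For Part 1, given $(\rho_0,u_0,\rho_0^\star)$ as in the hypotheses, I would place the $N$ points $q_{0,i}^\epsilon$ greedily so that the cell $[q_{0,i-1}^\epsilon,q_{0,i}^\epsilon)$ has length $2\epsilon/\rho_0(q_{0,i-1}^\epsilon)$ (up to $O(\epsilon^2)$ rounding to fit exactly in $[0,1]$, which fixes $N=N(\epsilon)\sim\epsilon^{-1}$); this yields $d_{0,i}^\epsilon = 2\epsilon(1/\rho_0-1)\in[c_0\epsilon,C_0\epsilon]$ from $\rho_0\in[\delta,\overline\rho]$, and the Lipschitz bound on $\rho_0$ gives $|d_{0,i+1}^\epsilon-d_{0,i}^\epsilon|\le C_0\epsilon^2$. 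Then set $d_{0,i}^{\star,\epsilon}:=2\epsilon(1/\rho_0^\star(q_{0,i-1}^\epsilon)-1)\ge 0$ (admissible since $\rho_0^\star\le 1$), $u_{0,i}^\epsilon:=u_0(q_{0,i}^\epsilon)$, and verify \eqref{nse-conv-assumption}--\eqref{nse-u0-condition} directly: the piecewise-constant/linear interpolants converge in $L^\infty$, resp.\ $C(I)$, by uniform continuity of $\rho_0,\rho_0^\star,u_0$.

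For Part 2, the existence and uniqueness of a local solution follow from Cauchy--Lipschitz as already noted in the excerpt; the key point is the global extension $d_i^\epsilon(t)>0$ on $(0,T)$ and the $\epsilon$-uniform bounds, which I expect to prove via a discrete energy identity: multiplying \eqref{balance-matrix-intro} by $\mathbf{u}^\epsilon$ and summing, the lubrication term gives the dissipation $\sum_i \mu(u_i-u_{i-1})^2/d_i$, the repulsive term is the time derivative of a discrete potential energy $\sim\sum_i \tfrac{2\epsilon}{\gamma-1}\big((\tfrac{d_i^\star+2\epsilon}{d_i+2\epsilon})^{\gamma-1}-1\big)$ (the $\gamma=1$ logarithmic case handled separately), and the external force is controlled by Gr\"onwall. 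This bounds $\tfrac12\sum_i 2\epsilon (u_i^\epsilon)^2$, the potential energy, and $\int_0^T\sum_i\mu(u_i-u_{i-1})^2/d_i\,dt$ uniformly; boundedness of the potential energy forces $d_i^\epsilon(t)$ to stay away from $0$ quantitatively (hence global existence) and also keeps $\rho^{\star,\epsilon}$ bounded away from collapse is not needed since it is transported. Translating to the continuous representations: the kinetic-energy bound gives $\rho^\epsilon u^\epsilon\in L^\infty_tL^2_x$; the dissipation bound, together with $\int_I \tfrac{\mu}{1-\rho^\epsilon}|\partial_x u^\epsilon|^2 = \sum_i \mu(u_i-u_{i-1})^2/(q_i-q_{i-1})$ and $q_i-q_{i-1}\ge d_i$, gives the $L^2(0,T;H^1_0)$ bound on $u^\epsilon$; the $BV$ bounds on $\rho^\epsilon$ and $\rho^{\star,\epsilon}$ in space come from the $O(\epsilon^2)$ control on $|d_{i+1}-d_i|$ propagated in time (differentiating the discrete relation $\dot d_i = u_i-u_{i-1}$ and using the regularity of the flow), and in time from the continuity/transport structure; finally $\partial_t\rho^\epsilon,\partial_t\rho^{\star,\epsilon}\in L^2_tH^{-1}_x$ follows from the discrete continuity and transport identities that $\rho^\epsilon,\rho^{\star,\epsilon}$ satisfy exactly, namely $\partial_t\rho^\epsilon+\partial_x(\rho^\epsilon v^\epsilon)=0$ for an appropriate transport velocity $v^\epsilon$ close to $u^\epsilon$.

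For Part 3, with the uniform bounds in hand I would extract subsequences: $u^\epsilon\rightharpoonup u$ in $L^2(0,T;H^1_0)$; $\rho^\epsilon,\rho^{\star,\epsilon}$ relatively compact in $C([0,T];L^p)$ for $p<\infty$ by Aubin--Lions ($L^\infty_tBV_x$ spatial compactness plus $L^2_tH^{-1}_x$ time regularity, interpolated and upgraded from $L^2$ to $L^p$ using the uniform $L^\infty$ bound $0\le\rho^\epsilon,\rho^{\star,\epsilon}\le 1$); and $\rho^\epsilon u^\epsilon\to\rho u$ in $C_{weak}([0,T];L^2)$. The strong convergence of $\rho^\epsilon$ identifies the viscous term $\tfrac{\mu}{1-\rho^\epsilon}\partial_x u^\epsilon\rightharpoonup\tfrac{\mu}{1-\rho}\partial_x u$ (product of strongly and weakly convergent sequences, after checking $1-\rho^\epsilon$ stays uniformly positive — which needs care, see below), the pressure term $(\rho^\epsilon/\rho^{\star,\epsilon})^\gamma\to(\rho/\rho^\star)^\gamma$ strongly, and the convective term $\rho^\epsilon (u^\epsilon)^2$: here I would write $\rho^\epsilon(u^\epsilon)^2 = (\rho^\epsilon u^\epsilon)u^\epsilon$ and use the $C_{weak}L^2$ convergence of $\rho^\epsilon u^\epsilon$ against the strong $L^2_{t,x}$ convergence of $u^\epsilon$ (the latter from Aubin--Lions applied to $\rho^\epsilon u^\epsilon$ or to $u^\epsilon$ directly once a time-derivative bound on $\rho^\epsilon u^\epsilon$ in a negative space is obtained from the momentum equation). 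Passing to the limit in the weak formulations of the discrete continuity, momentum, and transport identities — together with the convergence of initial data from Part 1 — yields the weak solution of Definition \ref{def:nse-soln}.

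The main obstacle is the strict upper bound $\rho<1$ (equivalently, a uniform-in-$\epsilon$ lower bound $d_i^\epsilon(t)\ge c(T)\epsilon$), which is exactly what makes the singular viscosity $\tfrac{\mu}{1-\rho}$ meaningful and is not delivered by the energy estimate alone. I expect to obtain it by the argument of Lefebvre-Lepot--Maury adapted to the inertial, repulsive setting: show that the minimal gap $\min_i d_i^\epsilon(t)$ cannot reach $0$ in finite time because the lubrication force $\sim 1/d_i$ integrates to a logarithmic barrier — combining the momentum balance for the pair achieving the minimum with the energy bound to get a differential inequality of the form $\tfrac{d}{dt}(\text{something comparable to }\log d_{\min})\ge -C(T)$, ruling out finite-time collapse and yielding the quantitative bound $d_{\min}^\epsilon(t)\ge \epsilon\, e^{-C(T)}\cdot c_0$. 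The inertial term $2\epsilon\ddot q_i$ is a genuine complication here and must be absorbed either by a careful comparison argument on the discrete level or by passing through the PDE energy/entropy structure; this, and the resulting compactness needed to identify the nonlinear convective term, is where the novelty relative to \cite{lefebvre2008micro} lies.
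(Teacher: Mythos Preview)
Your overall three-stage strategy matches the paper, and your construction of initial data and energy estimate are essentially the same (the paper defines $q_{0,i}$ via $\int_{q_{0,i-1}}^{q_{0,i}}\rho_0=2\epsilon$ rather than cell lengths $2\epsilon/\rho_0$, but this is cosmetic). However, three technical points diverge from the paper in ways that matter.

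\textbf{Lower bound on $d_i$.} You correctly note in your final paragraph that the energy/potential bound alone does not yield $d_i\ge c(T)\epsilon$ (your earlier remark that ``boundedness of the potential energy forces $d_i^\epsilon(t)$ to stay away from $0$'' is wrong: $(d_i+2\epsilon)G_i$ stays bounded as $d_i\to 0$). Your proposed fix---a differential inequality on $\log d_{\min}$---is not how the paper proceeds and is awkward because of inertia. The paper instead exploits conservation of $\sum_i d_i$: if $d_k(\tau)\le d_k(0)$ there exists $m$ with $d_m(\tau)\ge d_m(0)$; summing the balance of forces \eqref{balance-intro} from $i=m$ to $i=k-1$ telescopes the lubrication term to $\dot d_k/d_k-\dot d_m/d_m$, and integrating in time gives $\ln(d_k(\tau)/d_k(0))=\ln(d_m(\tau)/d_m(0))+\text{(bounded)}$, where the inertial contribution $2\epsilon\sum_j(u_j(\tau)-u_j(0))$ is controlled directly by the kinetic energy bound. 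This telescoping-plus-reference-gap argument is the clean way to absorb inertia here.

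\textbf{Convective term and auxiliary variables.} The paper does \emph{not} prove strong $L^2$ convergence of $u^\epsilon$, and the approximate momentum equation is not written for $(\rho^\epsilon,u^\epsilon)$ but for auxiliary quantities $(\chi^\epsilon,v^\epsilon,w^\epsilon,G^\epsilon)$: $\chi^\epsilon$ is the volume-fraction indicator, $v^\epsilon$ a piecewise-linear velocity constant on each particle, and the exact identity is $\partial_t(\chi^\epsilon v^\epsilon)+\partial_x(\chi^\epsilon(v^\epsilon)^2)-\partial_x w^\epsilon+\partial_x G^\epsilon=f^\epsilon$. The product $\chi^\epsilon(v^\epsilon)^2\rightharpoonup\rho u^2$ is obtained by compensated compactness (Lions' div--curl type lemma) from $\partial_t(\chi^\epsilon v^\epsilon)\in L^2_tH^{-1}_x$ and $\partial_x v^\epsilon\in L^2_{t,x}$, not by Aubin--Lions on $u^\epsilon$. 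Your route via strong convergence of $u^\epsilon$ is plausible in principle (since $\rho^\epsilon$ is bounded below), but you would first need to write an exact approximate momentum equation, which does not hold for the pair $(\rho^\epsilon,u^\epsilon)$ as defined.

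\textbf{Missing $L^\infty$ bound on $u^\epsilon$.} The paper also proves $\|u^\epsilon\|_{L^\infty_{t,x}}\le C(T)$ by a discrete maximum principle (the particle with maximal velocity sees a nonpositive lubrication contribution), using the already-established $\|\partial_x G^\epsilon\|_{L^1_tL^\infty_x}$ bound. This is then fed back to upgrade the $|d_{i+1}(t)-d_i(t)|\le C\epsilon^2$ bound from $L^1_t$ to $L^\infty_t$, which in turn gives the $L^\infty_tBV_x$ control on $\rho^\epsilon$ that you need. Your sketch of ``propagating the $O(\epsilon^2)$ control by differentiating $\dot d_i=u_i-u_{i-1}$'' skips this step.
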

\begin{remark}
Throughout the paper, the limit of sequences indexed by  $\epsilon $ should be understood as the limit when $N\to\infty$ where the associated particle radius $\epsilon(N)\to0$. For notational simplicity, we will always write this parameter simply as $\epsilon$ even though it is implicitly tied to $N$.
\end{remark}

\begin{remark}
    As we will see from the proof of Theorem \ref{prop:global-nse}, the density $\rho^{\epsilon}(t,\cdot) < 1$ for all $t>0$ whenever $\rho^{\epsilon}(0,\cdot) < 1$ (i.e. no particles will come into contact in finite time if no two particles are in contact initially). This is because the lubrication force prevents collisions from forming - see Proposition \ref{prop:global-nse} for a quantification of this statement.
\end{remark}


\begin{remark}[Notation]
It is important to note that each of the microscopic quantities we have defined so far ($q_i, u_i, d_i$ and $d_i^\star$) depend on $\epsilon$, but we will drop this dependence from now, for the sake of brevity. 

The superscript $\epsilon$ will only be used for the vector quantities $(\mathbf{q}^\epsilon, \mathbf{u}^\epsilon, \mathbf{d}^{\star,\epsilon})$ and those that depend on space. For instance, we will write $\rho^\epsilon$ since it varies spatially, but not $q_i^\epsilon(t)$; instead, we write $q_i(t)$. Furthermore, we will use the notation $X_t Y_x$ to denote the Bochner space $X(0,T; Y(I))$ for suitable Banach spaces $X$ and $Y$.
\end{remark}

The proof of Theorem \ref{thm:nse} is conducted in Sections \ref{sec:global-NSE} and \ref{sec:limit}. We first analyze the microscopic ODE system \eqref{ODE_NS} providing the results for its local and global solvability in Section \ref{sec:global-NSE}. Following that, in Section, \ref{sec:limit} we turn to the macroscopic functions $\rho^\epsilon, u^\epsilon, \rho^{\star,\epsilon}$ (originally introduced in \eqref{rho-NSE},  \eqref{u-NSE} and \eqref{rhost-NSE} respectively) and obtain a PDE representation of the microscopic system. In Subsection \ref{sec:uniform-NSE} we derive some uniform in $\epsilon$ estimates for the macroscopic quantities. Much of the difficulty lies in obtaining sufficient bounds on the macroscopic velocity $u^\epsilon$ and its gradient.
 The uniform estimates 
 allow us eventually to take the limit $\epsilon \to 0$ in Subsection \ref{sec:limit-NSE}, leading to system \eqref{limit-NSE-intro}. 

\section{Analysis of the microscopic system}
 \label{sec:global-NSE}



\subsection{Construction of initial data and existence of a local-in-time solution} \label{sec: construction-NSE}

The following proposition shows that from well prepared macroscopic initial data $(\rho_0, u_0,\rho_0^\star)$ we can construct discrete initial data satisfying \eqref{nse-dist-assumption}-\eqref{nse-u0-condition}.
%
\begin{proposition}\label{prop:ID_micro}
Let $(\rho_0, u_0,\rho_0^\star)\in W^{1,\infty}(I)\times H^1_0(I)\times W^{1,\infty}(I)$ satisfying $\rho_0 \in [\delta, \overline{\rho}]$ and $\rho^\star_0 \in [\delta, 1]$ for some $0 < \delta < \overline{\rho} < 1$ be given. We denote $M_0 = \int_0^1 \rho_0(x) \ dx$ the total mass.
Then there exist  $(\mathbf{q}_0^\epsilon, \mathbf{u}_0^\epsilon, \mathbf{d}^{\star,\epsilon}_0)_\epsilon$ (or equivalently $(\mathbf{d}_0^\epsilon, \mathbf{u}_0^\epsilon, \mathbf{d}^{\star,\epsilon}_0)_\epsilon$)
a number of particles $N  \propto \ep^{-1}$ depending on $\rho_0$,
and some constants $c_0, C_0 > 0$ depending on $\delta, \bar \rho$, but independent of $\ep$, such that   the following estimates hold
\begin{align} 
0 < c_0\ep \leq d_{0,i} \leq C_0\ep&, \qquad 0 \leq d_{0,i}^\star \leq C_0\ep, \label{eq:boundd0}\\
|d_{0,i+1} - d_{0,i}| &\le \ C_0 \epsilon^2, \label{di-diff-t=0}\\
|d_{0,i+1}^{\star} - d_{0,i}^{\star}| &\le C_0 \epsilon^2,   \label{dist-diff-t=0}\\
|u_{0,i}| &\le \|u_0\|_{L^\infty}\label{eq:boundu0i}
\end{align} 
for all $i =1, \dots, N-1$.
Moreover, defining
\begin{align}
&\rho^\epsilon(0,x) := \sum_{i=1}^{N}\frac{2\epsilon}{d_{0,i}+2\epsilon} \mathbf{1}_{[q_{0,i-1}, q_{0, i})}(x),  \label{ID-rho0-NSE-bis} \\[1ex] 
&\rho^{\star,\epsilon}(0,x) := \sum_{i=1}^{N}\frac{2\epsilon}{d_{0,i}^{\star}+2\epsilon}\mathbf{1}_{[q_{0,i-1}, q_{0, i})}(x), \label{ID-rho0st-NSE-bis}\\[1ex]
&u^\ep(0,x):= \sum_{i=1}^{N} \left[u_{0,i-1}^\ep + \frac{u_{0,i}^\ep -u_{0,i-1}^\ep}{q_{0,i} -q_{0,i-1}} (x-q_{0,i-1})\right] \mathbf{1}_{[q_{0,i-1},q_{0,i})}(x),\label{ID-u-NSE-bis}
\end{align}
we have the following convergences:
\begin{align} 
&\rho^\epsilon(0,\cdot) \to \rho_0 \quad \text{and} \quad  \rho^{\star,\epsilon}(0,\cdot) \to \rho_0^\star \quad \text{in } L^\infty(I), \label{conv-rho0}\\
&u^\epsilon(0, \cdot) \to u_0 \text{ in } C(I).\label{u0-NSE-bis}
\end{align} 
\end{proposition}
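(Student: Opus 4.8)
The plan is to construct the discrete initial data by a direct ``greedy'' placement of particle centres so that the local density of the resulting piecewise-constant function $\rho_0^\epsilon$ tracks the prescribed profile $\rho_0$, and then to verify the claimed bounds and convergences. First I would fix $\epsilon$ and define the positions $q_{0,i}$ inductively by $q_{0,0}=0$ and $q_{0,i} = q_{0,i-1} + 2\epsilon/\rho_0(q_{0,i-1})$ (or with a slightly shifted evaluation point such as the midpoint or $\rho_0(q_{0,i})$ solved implicitly — any choice works since $\rho_0$ is Lipschitz), stopping at the first index where the next centre would exceed $1$; call that index $N$, and then reset $q_{0,N}=1$ and adjust the last gap. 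Since $\rho_0 \in [\delta,\overline\rho] \subset (0,1)$ the increments satisfy $2\epsilon/\overline\rho \le q_{0,i}-q_{0,i-1} \le 2\epsilon/\delta$, which immediately gives $d_{0,i} = q_{0,i}-q_{0,i-1}-2\epsilon \in [ (2/\overline\rho - 2)\epsilon, (2/\delta-2)\epsilon]$, i.e.\ \eqref{eq:boundd0} with $c_0 = 2(1/\overline\rho-1)>0$ and $C_0 = 2(1/\delta-1)$; the count $N$ satisfies $N\asymp \epsilon^{-1}\int_0^1\rho_0 = M_0\epsilon^{-1}$, giving $N\propto\epsilon^{-1}$, and the boundary gap can be absorbed into the constants. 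For the critical distances, since $\rho_0^\star$ is also bounded below by $\delta$ and at most $1$, and the centres are already fixed, I would simply set $d_{0,i}^\star := 2\epsilon/\rho_0^\star(q_{0,i-1}) - 2\epsilon \in [0, C_0\epsilon]$ (the upper bound $1$ on $\rho_0^\star$ makes this $\ge 0$), which is \eqref{eq:boundd0} for the starred quantities.

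For the second-order increments \eqref{di-diff-t=0}--\eqref{dist-diff-t=0}, the key observation is that $d_{0,i} = 2\epsilon(1/\rho_0(q_{0,i-1}) - 1)$, so
\[
d_{0,i+1} - d_{0,i} = 2\epsilon\left( \frac{1}{\rho_0(q_{0,i})} - \frac{1}{\rho_0(q_{0,i-1})}\right),
\]
and since $1/\rho_0$ is Lipschitz (because $\rho_0$ is Lipschitz and bounded below by $\delta$) while $|q_{0,i}-q_{0,i-1}|\le 2\epsilon/\delta$, the right-hand side is bounded by $2\epsilon \cdot \mathrm{Lip}(1/\rho_0)\cdot (2\epsilon/\delta) = C_0\epsilon^2$; the same argument applied to $1/\rho_0^\star$ gives \eqref{dist-diff-t=0}. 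The velocity data is the easiest part: I would define $u_{0,i} := u_0(q_{0,i})$, which is well-defined and continuous since $u_0\in H^1_0(I)\hookrightarrow C(\bar I)$, so \eqref{eq:boundu0i} is immediate, and the endpoint conditions $u_{0,0}=u_{0,N}=0$ hold because $u_0\in H^1_0$.

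For the convergences \eqref{conv-rho0}--\eqref{u0-NSE-bis}: on the interval $[q_{0,i-1},q_{0,i})$ we have $\rho_0^\epsilon(0,x) = 2\epsilon/(d_{0,i}+2\epsilon) = \rho_0(q_{0,i-1})$ by construction, and $|x - q_{0,i-1}| \le 2\epsilon/\delta$, so $|\rho_0^\epsilon(0,x) - \rho_0(x)| \le \mathrm{Lip}(\rho_0)\cdot 2\epsilon/\delta \to 0$ uniformly in $x$; the identical estimate with $\rho_0^\star$ gives the other density convergence. For $u^\epsilon(0,\cdot)$, the function is piecewise linear interpolating $u_0$ at the nodes $q_{0,i}$ with mesh size $O(\epsilon)$, so $\|u^\epsilon(0,\cdot) - u_0\|_{C(I)} \le \omega_{u_0}(2\epsilon/\delta) \to 0$ where $\omega_{u_0}$ is the modulus of continuity of $u_0 \in C(\bar I)$. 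The main obstacle — really the only place requiring care — is handling the truncation at the right endpoint: the naive induction overshoots $x=1$ at step $N$, so one must either slightly rescale all gaps by a factor $1+O(\epsilon)$ or redistribute the defect over the last finitely many cells, and then check that this perturbation does not destroy the lower bound $c_0\epsilon$ on $d_{0,i}$ nor the $O(\epsilon^2)$ increment bounds. Since the defect is of size $O(\epsilon)$ spread over $O(1/\epsilon)$ cells — or alternatively a multiplicative correction of relative size $O(\epsilon)$ — both the uniform gap bounds and the second-order differences survive with possibly enlarged constants $c_0, C_0$, which is all that is required.
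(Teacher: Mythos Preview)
Your argument is correct, but the paper takes a slightly different route that you may find cleaner on exactly the point you flag as ``the main obstacle''. Instead of fixing $\epsilon$ and placing centres by point evaluation $q_{0,i}=q_{0,i-1}+2\epsilon/\rho_0(q_{0,i-1})$, the paper fixes $N$, sets $\epsilon=M_0/(2N)$, and defines $q_{0,i}$ by the integral condition $\int_{q_{0,i-1}}^{q_{0,i}}\rho_0=2\epsilon$. Because the total mass is $M_0=2N\epsilon$, this guarantees $q_{0,N}=1$ exactly, so no endpoint correction is needed. The price is that $\rho_{0,i}$ is now the \emph{mean} of $\rho_0$ over the cell rather than a point value, so the increment bound $|d_{0,i+1}-d_{0,i}|\le C_0\epsilon^2$ requires a (short) estimate on differences of consecutive means via the Lipschitz constant, rather than your one-line Lipschitz bound on $1/\rho_0$. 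The paper handles $d_{0,i}^\star$ the same way (mean of $\rho_0^\star$ over the cell), and for the velocity it uses the $H^1\hookrightarrow C^{1/2}$ estimate followed by Arzel\`a--Ascoli, whereas your modulus-of-continuity argument for piecewise linear interpolation is equally valid and arguably more direct.

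In short: your construction trades a clean increment estimate for an endpoint correction, while the paper's trades a clean endpoint for a slightly longer increment estimate. Both work; the paper's choice has the advantage that nothing is hand-wavy at the boundary, which matters since (as you note) the naive last gap $1-q_{0,N-1}-2\epsilon$ can genuinely be negative before rescaling.
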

\begin{proof}
To begin, for fixed $N\gg 1$, we take $\ep=\ep(N)=\frac{M_0}{2N}$ and set $q_{0,0} = 0$. We then find $q_{0,1}$ such that
\[
\int_{0}^{q_{0,1}} \rho_0(x) dx = 2\ep.
\]
The existence of $q_{0,1} \in (0,1)$ is guaranteed by the positivity of $\rho_0$ and the fact that the total mass is larger than $2\ep$ for sufficiently large $N$.
Similarly, we construct recursively $q_{0,i}$ for $i=2, \dots, N$, such that
\begin{equation}\label{q-choice}
\int_{q_{0,i-1}}^{q_{0,i}} \rho_0(x) dx = 2\ep.
\end{equation}
Note that our construction ensures that $q_{0,N}=1$, and that
$\rho_{0,i}$, $i= 1, \dots,N$, is equal to the mean value of $\rho_0$ on $(q_{0,i-1} , q_{0,i})$. Indeed, from the definition of $\rho_{0,i}$ \eqref{rho-NSE} we have
\begin{equation}\label{rhoirho}
   \rho_{0,i}   =\frac{2\epsilon}{d_{0,i} +2\epsilon}
   = \frac{2\epsilon}{q_{0,i}-q_{0,i-1}}  = \frac{1}{q_{0,i}-q_{0,i-1}} \int_{q_{0,i-1} }^{q_{0,i}} \rho_0(x) \,dx.
\end{equation}

Let us also note that using the lower bound $\rho_0 \ge \delta$ the equality \eqref{q-choice} leads to the bound 
 \begin{equation} \label{qi-diff}
     q_{0,i}  - q_{0,i-1}  \le \frac{2\epsilon}{\delta}.
 \end{equation}
or in other words, using that $d_{0,i} = q_{0,i} - q_{0,i-1}-2\epsilon$
\begin{equation} \label{q-bound}
    d_{0,i}  \le 2\epsilon \left( \frac{1}{\delta} - 1 \right) =: C_0\epsilon.
 \end{equation} 
  Similarly we can use the upper bound $\rho_0 \le \overline{\rho}$ to get 
 \begin{equation} \label{di-LB}
     d_{0,i}  \ge 2\epsilon \left( \frac{1}{\overline{\rho}} - 1 \right) =: c_0\epsilon > 0.
 \end{equation}
Having fixed positions $q_{0,i}$ and $\rho_0^\star$ being given, we choose $d^{\star}_{0,i}$ for $i = 1,..., N$ so that 
$\rho^\star_{0,i}$ is equal to the mean value of $\rho_0^\star$ on $(q_{0,i-1},q_{0,i})$:
\begin{equation} \label{q-st-choice}
    \frac{1}{q_{0,i}  - q_{0,i-1} } \int_{q_{0,i-1}}^{q_{0,i}} \rho_0^{\star}(x)\,dx 
    = \frac{2\epsilon}{d_{0,i}^{\star}+2\epsilon}.
\end{equation} 
%
As above we can use the upper/lower bounds $\delta <\rho_0^{\star} \leq 1$ to show that this implies \begin{equation} \label{q-st-bound}
    0 \leq  d_{0,i}^{\star}   \le 2\epsilon \left( \frac{1}{\delta} - 1 \right)=C_0\epsilon.
\end{equation} 
%
 Next,  fixing $x \in [q_{0,i-1} , q_{0,i} )$  for 
 $i\in\{1,\ldots,N\}$, we observe that for $\rho^\epsilon(0, \cdot)$ defined in \eqref{ID-rho0-NSE-bis} thanks to \eqref{rhoirho}, we have
\begin{equation} \label{r-eps-bound}
    \begin{aligned}
         |\rho^{\epsilon}(0,x) - \rho_0(x)| 
        &= \left|\frac{1}{q_{0,i} -q_{0,i-1} } \int_{q_{0,i-1} }^{q_{0,i} } \rho_0(y)~dy - \rho_0 (x) \right| \\[1ex] 
        & \leq \frac{1}{q_{0,i}  - q_{0,i-1} } \int_{q_{0,i-1} }^{q_{0,i} } \left|\rho_0 (y)-\rho_0 (x)\right|~dy \\[1ex] 
        & \le L_0  |q_{0,i}  - q_{0,i-1} | 
        \le L_0 \frac{2\epsilon}{\delta},
    \end{aligned} 
\end{equation}
where $L_0$ is the Lipschitz constant of $\rho_0$ and we have used \eqref{qi-diff}. 
Therefore, $\rho^\ep(0, \cdot)$ converges to $\rho_0$ in $L^\infty(I)$.

Similarly, using $\rho_{0,i} \geq \delta$, we have for $i=1,..,N-1$ that
\begin{align*}
|d_{0,i+1} - d_{0,i} |
& = \dfrac{2\ep}{\rho_{0,i} \rho_{0,i+1}} |\rho_{0,i} - \rho_{0,i+1}| \\
& \leq \dfrac{2\ep}{\delta^2} \left| \frac{1}{q_{0,i+1}-q_{0,i}} \int_{q_{0,i}}^{q_{0,i+1}} \rho_0(x)\,dx - \rho_0(q_{0,i}) 
	- \frac{1}{q_{0,i}-q_{0,i-1}}\int_{q_{0,i-1}}^{q_{0,i}} \rho_0(x)\,dx + \rho_0(q_{0,i})\right| \\[1ex] 
&\le \dfrac{2\ep}{\delta^2} \Bigg[ \frac{1}{q_{0,i+1}-q_{0,i}} \int_{q_{0,i}}^{q_{0,i+1}} \left| \rho_0(x) - \rho_0(q_{0,i})\right|\,dx      \\[1ex]     
& \hspace{2cm} 	+  \frac{1}{q_{0,i}-q_{0,i-1}} \int_{q_{0,i-1}}^{q_{0,i}} \left|\rho_0(x) - \rho_0(q_{0,i})\right|\,dx \Bigg]   \\[1ex] 
&\le \dfrac{2\ep}{\delta^2} \left[\frac{L_0}{q_{0,i+1}-q_{0,i}} \int_{q_{0,i}}^{q_{0,i+1}} \left| x-q_{0,i}\right|\,dx  
	+  \frac{L_0}{q_{0,i}-q_{0,i-1}} \int_{q_{0,i-1}}^{q_{0,i}} \left|x-q_{0,i}\right|\,dx\right] \\[1ex]
&\le \dfrac{2\ep}{\delta^2} \left[ L_0 \left( |q_{0,i+1} - q_{0,i}| + |q_{0,i} - q_{0,i-1}| \right) \right] \\
& \le  8\dfrac{L_0}{\delta^3}\epsilon^2 \leq C_0\epsilon^2,
\end{align*}
by using \eqref{qi-diff}. 

Repeating similar estimates for $\rho^{\star,\epsilon}(0,x)$ (defined by \eqref{ID-rho0st-NSE-bis}) gives us that for $x \in (q_{0,i-1} , q_{0,i} )$, 
\begin{equation*}
     |\rho^{\star,\epsilon}(0,x) - \rho^{\star}(0,x)| \le L_0^\star \frac{2\epsilon}{\delta},
\end{equation*}  
and 
\begin{equation*}
|d_{0,i+1}^\star - d_{0,i}^\star| \leq \dfrac{8L^\star_0}{\delta^3} \ep^2 \leq C_0\epsilon^2
\end{equation*}
where $L_0^{\star}$ is the Lipschitz constant of $\rho_0^{\star}$. 
Since these bounds are independent of $i$ we can conclude that \eqref{conv-rho0} holds. 
%

To define the approximation of initial velocity, we simply set 
\begin{equation}
\mathbf{u}_0^\epsilon = (u_{0,1}, \dots, u_{0,N-1}) \quad \text{with} \quad u_{0,i} := u_0(q_{0,i}), \text{ for each } i=1,...,N-1.
\end{equation} 
Since $u_0\in L^\infty(I)$, we have for free \eqref{eq:boundu0i} and a uniform control in $L^\infty$ on $u^\ep(0, \cdot)$ (defined by \eqref{ID-u-NSE-bis}). 
Moreover, using the regularity $u_0 \in H^1_0$ and the Cauchy-Schwarz inequality, we have for $x \in (q_{0,i-1} , q_{0,i} )$:
\begin{equation} \label{u0-conv-proof}
\begin{aligned}
| u^\ep(0,x) - u_0(x)| 
&\le  | u^\ep(0,x) - u_0(q_{0,i-1} )|+ |u_0(q_{0,i-1} ) - u_0(x)| \\[1ex] 
&\le \left| u_0(q_{0,i} ) - u_0 (q_{0,i-1} ) \right| \frac{x-q_{0,i-1} }{q_{0,i}  - q_{0,i-1} } + \|u_0\|_{H^1} |q_{0,i-1}  -x|^{1/2} \\[1ex] 
&\le  2 \|u_0\|_{H^1} |q_{0,i}  -q_{0,i-1}|^{1/2} \\[1ex] 
&\le C\epsilon^{1/2}.
\end{aligned}
\end{equation}
 As a consequence, we can show that the sequence $(u^\ep(0, \cdot))_\ep$ is uniformly-equi-continuous. Indeed, since $u^0$ is uniformly-continuous, for any $\alpha$ there exists $\eta > 0$ such that if $x,y\in (q_{0,i-1} , q_{0,i} )$ satisfy $|x-y| \leq \eta$ then $|u_0(x) - u_0(y)| \leq \alpha$ and 
\begin{align*}
|u^\ep(0, x) - u^\ep(0,y)|
& \leq |u^\ep(0,x) - u_0(x)| + |u^\ep(0,y) - u_0(y)| + |u_0(x) - u_0(y)| \\
& \leq C \ep^{1/2} + \alpha \leq 2\alpha,
\end{align*}
for sufficiently small $\ep$. Since moreover, by definition $\|u^\ep(0,\cdot)\|_{L^\infty} \leq \|u_0\|_{L^\infty}$, we can apply Arzel\`a–Ascoli's theorem and obtain the uniform convergence \eqref{u0-NSE-bis}. 
\end{proof}
%




{\it Local-in-time existence of the discrete solution.}~~
We have now shown that we can construct initial data $(\mathbf{q}^{\epsilon}_{0},\mathbf{u}_0^\epsilon, \mathbf{d}^{\star,\epsilon}_0)_{\epsilon >0}$ satisfying \eqref{nse-dist-assumption}-\eqref{u0-NSE}. 

The right-hand side of the ODE \eqref{ODE_NS} is Lipschitz in $U := (\mathbf{q},\mathbf{u}, \mathbf{d}^\star)$. 
Indeed, the external force $f$ is assumed to be Lipschitz in $x$, implying that $\mathbf{f}$ is Lipschitz in $\mathbf{q}$; $\mathbf{b}$ is Lipschitz in $\mathbf{q}$ and $\mathbf{d}^\star$ in view of~\eqref{Gi-def},\eqref{b-vector} for non-negative $d_i^\star, d_i$, and finally $\mathbb{A}$ defined by~\eqref{A-matrix} is Lipschitz in $\mathbf{q}$ for positive $d_i$'s. In particular, our construction ensures that $d_{0,i} > c_0 \ep,\  d_{0,i}^\star \geq 0$.
Therefore, we can apply the Cauchy-Lipschitz theorem to the ODE \eqref{ODE_NS} complemented by the initial datum $(\mathbf{q}^{\epsilon}_{0},\mathbf{u}_0^\epsilon, \mathbf{d}^{\star,\epsilon}_0)$  to obtain, for fixed $\ep> 0$, a solution $(\mathbf{q}^\epsilon(t), \mathbf{u}^\epsilon(t), \mathbf{d}^{\star, \epsilon}(t))$, with  $\mathbf{d}^{\star, \epsilon}(t) = \mathbf{d}^{\star, \epsilon}_0$, on some  time interval $(0,T^\star_\epsilon)$ with $T^\star_\epsilon>0$ small enough.  
Moreover, on that time interval, we ensure that $d_i(t) > 0$.

\subsection{Extension to a global solution } 
\label{sec:extension-NSE}
In this subsection we aim to show that the solution to the ODE \eqref{ODE_NS} is in fact global-in-time, i.e. we can take $T^\star_\epsilon = T$ for any $T>0$.
To do so, we will show that, at $\ep$ fixed, the distances $d_i$ remain ``far'' from $0$.
To begin, let us assert an $L^\infty$ bound in time on the interaction potential $G_i = G_i[\mathbf{q}^\ep, \mathbf{d}^{\star, \epsilon}]$ defined by \eqref{Gi-def}.
\begin{lemma} 
	Let $\epsilon>0$ be given and consider $(\mathbf{q}^\epsilon, \mathbf{u}^\epsilon,\mathbf{d}^{\star,\epsilon})$ the local solution to the ODE \eqref{ODE_NS} constructed previously on $[0,T_\epsilon^\star]$. We have, for each $i=1,...,N$
	\begin{equation}
	\|G_i\|_{L^{\infty}(0,T^\star_\epsilon)}  \le C,
	\end{equation} 
	for some $C > 0$ independent of $T^\star_\epsilon$ and $\ep$.
    \label{lemma:boundGi}
\end{lemma}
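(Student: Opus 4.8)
The plan is to obtain the bound on $G_i = \left(\frac{d_i^{\star,\epsilon}+2\epsilon}{d_i^\epsilon(t)+2\epsilon}\right)^\gamma$ by controlling $G_i$ from \emph{below} by a quantity whose time-derivative can be estimated against an energy, so that a Gronwall-type argument forces $d_i^\epsilon(t)$ to stay bounded away from zero on $[0,T^\star_\epsilon]$. Since $d_i^{\star,\epsilon}+2\epsilon \le C_0\epsilon + 2\epsilon \le C\epsilon$ by \eqref{eq:boundd0}, and $d_i^\epsilon(t) + 2\epsilon \ge 2\epsilon$ trivially, we immediately get $G_i \ge 0$; the real content is the upper bound, i.e. a lower bound on $d_i^\epsilon(t)$. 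The natural object is the discrete energy. Multiplying the balance law \eqref{balance-intro} (in the form \eqref{balance-matrix-intro}) by $\mathbf{u}^\epsilon = \dot{\mathbf{q}}^\epsilon$ and summing over $i$, one obtains an energy identity of the schematic form
\[
\frac{d}{dt}\left( \epsilon \sum_i |u_i|^2 + \sum_i W_i(d_i) \right) + \mu \sum_i \frac{(u_i - u_{i-1})^2}{d_i} = 2\epsilon \sum_i \bar f_i u_i,
\]
where $W_i$ is the primitive of the repulsive force, i.e. (up to constants and sign conventions) $W_i(d) \sim \frac{(d_i^{\star,\epsilon}+2\epsilon)^\gamma}{(\gamma-1)}(d+2\epsilon)^{1-\gamma}$ for $\gamma>1$, or a logarithm for $\gamma=1$; this potential is nonnegative, convex and \emph{blows up} like $G_i$ itself as $d_i \to 0$ (more precisely $W_i \sim \tfrac{d}{\gamma-1} \cdot$ something comparable to $G_i$, or one simply notes $W_i(d_i) \ge c\, d_i^\epsilon$-independent lower control isn't what we want — rather $W_i(d_i)$ large forces $d_i$ small forces $G_i$ large, and conversely).

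The key steps, in order, would be: (1) write the energy identity rigorously on $[0,T^\star_\epsilon]$, being careful that all terms are well-defined since $d_i(t)>0$ there; (2) bound the forcing term $2\epsilon\sum_i \bar f_i u_i$ using $\|f\|_{L^\infty}$, the fixed total number of particles $N\epsilon \sim 1$, and Young's inequality, absorbing part into the viscous term or controlling $\sum_i |u_i|^2$ by Gronwall; (3) conclude a bound $\epsilon \sum_i |u_i(t)|^2 + \sum_i W_i(d_i(t)) \le C$ uniform in $t \le T^\star_\epsilon$ and in $\epsilon$ (here the initial energy must be controlled, which uses \eqref{eq:boundd0} to bound $\sum_i W_i(d_{0,i})$ from above — note $d_{0,i}\ge c_0\epsilon$ keeps the initial potential finite and $O(1)$ after summing $N\sim\epsilon^{-1}$ terms each of size... this needs the scaling to work out, which it does because $W_i(d_{0,i}) \sim \epsilon \cdot (\text{bounded})$); (4) from $W_i(d_i(t)) \le C$ deduce a pointwise lower bound $d_i^\epsilon(t) \ge c(\epsilon) > 0$, hence an upper bound on $G_i$. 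Since each $W_i(d_i)$ is individually bounded by the sum, this gives a per-$i$ lower bound on $d_i$, hence $G_i \le \left(\frac{C\epsilon}{2\epsilon}\right)^\gamma = C$ — and one checks the bound is in fact independent of $\epsilon$ because the dependence cancels.

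The main obstacle I anticipate is step (3)–(4): making sure the energy bound is genuinely \emph{uniform in $\epsilon$} and that it translates into a lower bound on $d_i$ with the right scaling. The subtlety is that $\sum_i W_i(d_i)$ controls a sum of $N\sim\epsilon^{-1}$ nonnegative terms, so individually $W_i(d_i) \le C$ (not $C\epsilon$), and one must check that $W_i(d_i)\le C$ forces $d_i \ge c\epsilon$ rather than merely $d_i \ge c > 0$ — i.e. that the potential, evaluated with the $\epsilon$-dependent prefactor $(d_i^{\star,\epsilon}+2\epsilon)^\gamma \sim \epsilon^\gamma$ and the $+2\epsilon$ shift, has the scaling that makes $W_i(d_i)=O(1)$ compatible with $d_i/\epsilon$ bounded below; plugging back, $G_i = \big((d_i^{\star}+2\epsilon)/(d_i+2\epsilon)\big)^\gamma$ is then a ratio of two quantities each comparable to $\epsilon$, giving an $\epsilon$-independent bound as claimed. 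A secondary technical point is handling the Dirichlet endpoints ($u_0=u_N=0$, $d_1$ and $d_N$ involving the fixed walls) correctly in the summation by parts, but this is routine. I would also remark that for this particular lemma one does \emph{not} yet need the sharp quantitative no-collision estimate (that is Proposition \ref{prop:global-nse}); a crude $\epsilon$-dependent lower bound on $d_i$ suffices to globalize the ODE and to bound $\|G_i\|_{L^\infty}$, though in fact the argument delivers the $\epsilon$-uniform bound stated.
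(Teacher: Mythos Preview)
You have missed that the lemma is a two-line triviality, and in fact you wrote the proof yourself in your opening sentence without recognizing it. You note that $d_i^{\star,\epsilon}+2\epsilon \le (C_0+2)\epsilon$ and that $d_i^\epsilon(t)+2\epsilon \ge 2\epsilon$ (since $d_i^\epsilon(t)>0$ on the local existence interval). These two inequalities give immediately
\[
G_i(t)=\left(\frac{d_i^\star+2\epsilon}{d_i(t)+2\epsilon}\right)^\gamma \le \left(\frac{(C_0+2)\epsilon}{2\epsilon}\right)^\gamma = \left(\frac{C_0+2}{2}\right)^\gamma,
\]
which is exactly the paper's proof. You instead concluded ``$G_i\ge 0$'' from these bounds and then declared that ``the real content is the upper bound, i.e.\ a lower bound on $d_i^\epsilon(t)$'' --- but no nontrivial lower bound on $d_i^\epsilon(t)$ is needed, because the $+2\epsilon$ shift in the denominator already does the work.

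Your proposed energy route is not only unnecessary but, as you sketch it, would \emph{not} deliver the $\epsilon$-uniform bound. From the discrete energy identity one obtains $\sum_i W_i(d_i(t))\le C$ with $W_i(d)=\frac{(d_i^\star+2\epsilon)^\gamma}{(\gamma-1)(d+2\epsilon)^{\gamma-1}}$, hence for each $i$ individually $W_i(d_i(t))\le C$. Rewriting, $(d_i^\star+2\epsilon)\,G_i^{(\gamma-1)/\gamma}=(\gamma-1)W_i\le C$, so using only $d_i^\star\ge 0$ one gets $G_i\le (C/(2\epsilon))^{\gamma/(\gamma-1)}$, which blows up as $\epsilon\to 0$. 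The scaling does not cancel: the potential $W_i$ carries an extra factor $(d_i+2\epsilon)$ compared to $G_i$, and bounding that factor below by $2\epsilon$ brings you right back to the trivial inequality you started with. In the paper's logical order, the genuine lower bound $d_i(t)\ge c_1(t)\epsilon$ is Proposition~\ref{prop:global-nse}, which \emph{uses} this lemma (via the bound on $G_i$ in \eqref{di-LB-NSE}), so trying to go through a full no-collision estimate here would also risk circularity.
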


\begin{proof}
    Using the bounds~\eqref{eq:boundd0}, we have for each $i=1,...,N$ that
\begin{equation} 
\begin{aligned}\label{Geps-bound-NSE}
   G_i= \left( \frac{d_i^\star + 2\epsilon}{d_i(t)+2\epsilon} \right)^\gamma\le \left( \frac{d_i^\star + 2\epsilon}{2\epsilon} \right)^\gamma \le \left( \frac{C_0+2}{2} \right)^\gamma.
   \end{aligned}
\end{equation} 
\end{proof}
Next, we carry out a discrete energy estimate.
\begin{lemma}[Discrete energy estimate] For any $\epsilon >0$ and $t < T^\star_\epsilon$, 
    \begin{equation} \begin{aligned}\label{energy-NSE}
   \epsilon\sum_{i=1}^{N-1}   (u_i )^2 (t) +&   \frac{1}{\gamma-1} \sum_{i=1}^N   (d_i(t)+2\epsilon) G_i+\frac{\mu}{4}\sum_{i=1}^N\int_0^t \frac{|u_{i} (\tau) - u_{i-1} (\tau)|^2}{d_i(\tau)}~d\tau \\[1ex] 
   &\le  \epsilon\sum_{i=1}^{N-1}   (u_i )^2 (0) + \frac{1}{\gamma-1}\sum_{i=1}^N  (d_i(0)+2\epsilon) G_i+ \frac{1}{\mu}\|f\|_{L^2((0,t); L^1(I))}^{2} D_N(0) \\[1ex]
   & =: \mathcal{E}_0 + \frac{1}{\mu}\|f\|_{L^2((0,t); L^1(I))}^{2} D_N(0).\end{aligned}
    \end{equation}
where we have defined $D_N(t) := \sum_{i=1}^{N}d_i(t)$ which is independent of $t$ and $\epsilon$.
\end{lemma}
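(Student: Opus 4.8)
The natural approach is to multiply the balance of forces \eqref{balance-intro} by the velocity $u_i$ and sum over $i$, producing a discrete version of the energy identity $\frac{d}{dt}(\text{kinetic}+\text{potential}) + \text{dissipation} = \text{work of external force}$. More precisely, I would take the scalar product of the momentum equation $2\epsilon \dot{\mathbf{u}}^\epsilon = -\mathbb{A}(\mathbf{q}^\epsilon)\mathbf{u}^\epsilon + \mathbf{b} + \mathbf{f}$ with $\mathbf{u}^\epsilon = \dot{\mathbf{q}}^\epsilon$, keeping in mind the convention $u_0 = u_N = 0$ which kills the boundary contributions. The left-hand side gives $\epsilon \frac{d}{dt}\sum_i (u_i)^2$. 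The dissipative term: by the tridiagonal structure of $\mathbb{A}$ in \eqref{A-matrix} (which is the stiffness matrix of a discrete Laplacian with weights $1/d_i$), a summation by parts yields $\langle \mathbb{A}\mathbf{u},\mathbf{u}\rangle = \mu\sum_{i=1}^N \frac{|u_i - u_{i-1}|^2}{d_i} \ge 0$. Combined with the factor-$1/4$ discrepancy in \eqref{energy-NSE} this leaves room to absorb part of the external-force work via Young's inequality.

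The key structural point is the \emph{potential energy} term. I would show that the repulsive force vector $\mathbf{b}$ is, up to sign, the gradient of the potential $\Phi(\mathbf{q}) := \frac{1}{\gamma-1}\sum_{i=1}^N (d_i + 2\epsilon) G_i$ with respect to $\mathbf{q}$, so that $\langle \mathbf{b}(\mathbf{q}^\epsilon), \dot{\mathbf{q}}^\epsilon\rangle = -\frac{d}{dt}\Phi(\mathbf{q}^\epsilon(t))$. To see this, recall $G_i = \left(\frac{d_i^\star + 2\epsilon}{d_i+2\epsilon}\right)^\gamma$ and $d_i = q_i - q_{i-1} - 2\epsilon$, so $\partial_{q_j} d_i = \delta_{i,j} - \delta_{i-1,j}$; differentiating $(d_i+2\epsilon)G_i = (d_i^\star+2\epsilon)^\gamma (d_i + 2\epsilon)^{1-\gamma}$ gives $\frac{d}{dt}[(d_i+2\epsilon)G_i] = (1-\gamma)(d_i^\star+2\epsilon)^\gamma (d_i+2\epsilon)^{-\gamma}\dot d_i = (1-\gamma) G_i \dot d_i$, and since $\dot d_i = u_i - u_{i-1}$ we get $\frac{d}{dt}\Phi = -\sum_i G_i (u_i - u_{i-1}) = -\sum_i (G_i - G_{i+1}) u_i = -\langle \mathbf{b}, \mathbf{u}^\epsilon\rangle$ after a discrete summation by parts using the boundary conditions. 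This is exactly the term $\frac{1}{\gamma-1}\sum_i (d_i+2\epsilon)G_i$ on both sides of \eqref{energy-NSE}.

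For the external force term, $\langle \mathbf{f}(t,\mathbf{q}^\epsilon), \mathbf{u}^\epsilon\rangle = 2\epsilon\sum_i \bar f_i^\epsilon u_i$; I would bound this using $|u_i| = |u_i - u_0| \le \sum_{j\le i}|u_j - u_{j-1}| \le \left(\sum_j \frac{|u_j - u_{j-1}|^2}{d_j}\right)^{1/2}\left(\sum_j d_j\right)^{1/2} = D_N(0)^{1/2}\left(\sum_j \frac{|u_j-u_{j-1}|^2}{d_j}\right)^{1/2}$ (a discrete Cauchy–Schwarz / Poincaré inequality), together with $2\epsilon\sum_i |\bar f_i^\epsilon| \le \|f(t,\cdot)\|_{L^1(I)}$ (since $2\epsilon|\bar f_i^\epsilon| \le \int_{q_{i-1}}^{q_i}|f|\,dx$ up to the overlap bookkeeping), and then absorb the resulting $\|f(t,\cdot)\|_{L^1} D_N(0)^{1/2}(\cdots)^{1/2}$ into the dissipation via Young's inequality $ab \le \frac{\mu}{4}a^2 + \frac{1}{\mu}b^2$, which is precisely what leaves the $\frac{\mu}{4}$ coefficient in front of the dissipation and produces $\frac{1}{\mu}\|f\|_{L^2((0,t);L^1(I))}^2 D_N(0)$ after integrating in time. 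Finally I integrate the resulting differential inequality from $0$ to $t$. The main obstacle is the careful bookkeeping in the discrete summations by parts — getting the boundary terms to vanish (using $u_0 = u_N = 0$, and that $G_1, G_{N+1}$-type endpoint terms either cancel or are handled by the fixed endpoints) and correctly identifying the sign and constant $\frac{1}{\gamma-1}$ in the potential, including checking that the argument degenerates appropriately / needs separate treatment when $\gamma = 1$ (where the potential becomes logarithmic); for $\gamma > 1$ the computation above is clean.
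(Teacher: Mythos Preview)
Your proposal is correct and follows essentially the same route as the paper: multiply \eqref{balance-intro} by $u_i$, sum, use summation by parts (with $u_0=u_N=0$) to turn the lubrication term into the dissipation $\mu\sum_i\frac{|u_i-u_{i-1}|^2}{d_i}$, recognize the $G_i$ contribution as the time derivative of the potential $\frac{1}{\gamma-1}\sum_i(d_i+2\epsilon)G_i$ via $\dot d_i=u_i-u_{i-1}$, bound the forcing term by the discrete Poincar\'e--Cauchy--Schwarz inequality and absorb half of it into the dissipation via Young's inequality with the exact constants $\frac{\mu}{4}$ and $\frac{1}{\mu}$, then integrate in time. Your remark that the potential computation degenerates at $\gamma=1$ is a valid observation the paper does not address.
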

\begin{proof}
Fix $t \in (0,T^\star_\epsilon)$. Multiplying the balance of forces~\eqref{balance-intro} by $u_i $ and summing over $i=1,...,N-1$ gives 
	\begin{align}
 \epsilon \sum_{i=1}^{N-1} \frac{d}{dt} (u_i(t))^2  
 & =  \mu \sum_{i=1}^{N-1} \left( \frac{u_{i+1}(t) - u_{i}(t)}{d_{i+1}(t)}
- \frac{u_{i}(t) - u_{i-1}(t)}{d_{i}(t)} \right) u_i(t) \notag\\
& \qquad  + \sum_{i=1}^{N-1} \left( G_{i}(t) - G_{i+1}(t)\right)u_i(t) + 2\epsilon \sum_{i=1}^{N-1} \bar f_i(t)  u_i(t).	
\end{align}
Now since the first and final particles are  fixed at $x=0,1$ respectively, their velocities are $0$, i.e. $u_0(t)  = u_N(t)  = 0$ \footnote{There is an ambiguity with the notation $u_0$ here which should be clarified. Here, $u_0$ represents the velocity of particle $0$ at time $t$, but previously (particularly in Section \ref{sec: construction-NSE}) $u_0$ was the initial data for the velocity.} for all $t\ge0$, summation by parts tells us that 
\begin{align*}
  & \sum_{i=1}^{N-1} \left( \frac{u_{i+1}(t) - u_{i}(t)}{d_{i+1}(t)}
  - \frac{u_{i}(t) - u_{i-1}(t)}{d_{i}(t)} \right) u_i(t)  \\
  &=  u_N   \frac{u_{N}(t) - u_{N-1}(t)}{d_{N}(t)} - u_1(t)   \frac{u_{1}(t) - u_{0}(t)}{d_{1}(t)} - \sum_{i=1}^{N-1}  \frac{u_{i+1}(t) - u_{i}(t)}{d_{i+1}(t)} (u_{i+1} (t) - u_i (t)) \\[1ex] 
  &= - \frac{|u_1(t)-u_0(t)|^2}{d_1(t)} - \sum_{i=1}^{N-1} \frac{|u_{i+1}(t)  - u_{i}(t) |^2}{d_{i+1}(t)} \\[1ex] 
  &= -\sum_{i=1}^N\frac{|u_{i}(t)  - u_{i-1}(t) |^2}{d_i(t)}.
\end{align*} 
In a similar way, using again $u_0(t)  = u_N(t)  = 0$, we have for the interaction term 
\begin{align*}
& \sum_{i=1}^{N-1} \left( G_{i}(t) - G_{i+1}(t)\right)u_i(t) \\
& = \sum_{i=1}^{N-1} G_{i}(t)(u_{i}(t) - u_{i-1}(t)) - G_N(t)u_{N-1}(t) \\[1ex] 
& = \sum_{i=1}^{N} G_i(t)(u_{i}(t)-u_{i-1}(t)).
\end{align*} 
Now, since $u_i(t) - u_{i-1}(t) = \frac{d}{dt}(d_i(t)+2\epsilon)$, 
\begin{align*} 
     G_i(t)(u_{i}(t)-u_{i-1}(t))
     & =  \left( \frac{d_i^\star + 2\epsilon}{d_i(t)+2\epsilon} \right)^\gamma  \frac{d}{dt}(d_i(t)+2\epsilon)  \\[1ex]
     & = \frac{d}{dt} \left( \frac{(d_i^\star + 2\epsilon)^\gamma }{(1-\gamma)(d_i(t)+2\epsilon)^{\gamma-1}} \right) \\
     & =  \frac{d}{dt} \left( \frac{d_i(t) + 2\epsilon}{1-\gamma} G_i(t) \right). 
\end{align*} 
Therefore, 
\begin{equation} \begin{aligned}\label{Gi-dot}
     \sum_{i=1}^{N-1} \left( G_{i}(t) - G_{i+1}(t)\right)u_i(t)
      = -\sum_{i=1}^N  \frac{d}{dt} \left( \frac{d_i + 2\epsilon}{\gamma-1} G_i (t)\right).
     \end{aligned}
\end{equation}

For the remaining term of the estimate, we use the following ``discrete Poincaré''  inequality (using the boundary condition $u_0= 0$): 
\begin{equation} \label{ui-triangle}
\max_i |u_i(t)| \leq \sum_{k=1}^{N} |u_k(t) - u_{k-1}(t) |,  
\end{equation}
and deduce that 
\begin{equation}
\begin{aligned}
      \left|2\epsilon \sum_{i=1}^{N-1} \bar{f}_i(t) u_i (t) \right|
      &\leq 2\epsilon \sum_{i=1}^{N-1} |\bar{f}_i(t) ||u_i (t)|\\
      & \leq  2\ep \max_i |u_i(t)| \sum_{i=1}^{N-1} |\bar{f}_i(t)| \\
      &= \max_i |u_i(t)| \sum_{i=1}^{N-1}\left|\int_{q_{i}(t)-\epsilon}^{q_i(t)+\epsilon} f(t,x)\,dx\right|\\
      &\leq \max_i |u_i(t)| \sum_{i=1}^{N-1}\int_{q_{i}(t)-\epsilon}^{q_i(t)+\epsilon} |f(t,x)|\,dx \\
      &=  \max_i |u_i(t)| \|f(t, \cdot)\|_{L^1_x} \\[1ex] 
       & \leq \|f(t, \cdot)\|_{L^1_x}\sum_{i=1}^{N} |u_i(t) - u_{i-1}(t)|.
\end{aligned}
\end{equation}

We now multiply and divide by $\sqrt{2d_i(t)/\mu}$ to get
\begin{align*} \label{fu-energy-est} 
     2\epsilon \sum_{i=1}^{N-1} \bar{f}_i(t) u_i (t) 
     &\le \|f(t,\cdot)\|_{L^1(I)}\sum_{i=1}^{N} \dfrac{\sqrt{\mu}}{\sqrt{2}}\frac{|u_i(t) - u_{i-1}(t)|}{\sqrt{d_i(t)}} \dfrac{\sqrt{2}}{\sqrt{\mu}}\sqrt{d_i(t)} \\[1ex] 
     &\le \frac{\mu}{4} \sum_{i=1}^{N} \frac{|u_i(t) - u_{i-1}(t)|^2}{d_i(t)} + \frac{1}{\mu} \|f(t, \cdot)\|_{L^1_x}^2 D_N(0). 
\end{align*}
 All in all, we have for any $\epsilon > 0$ and $t \in (0,T^\star_\epsilon):$ 
 \begin{equation} \label{energy-ddt}
      \epsilon\sum_{i=1}^{N-1} \frac{d}{dt} (u_i )^2 (t) +  \sum_{i=1}^N \frac{d}{dt}\left(  \frac{d_i(t) + 2\epsilon}{\gamma-1} G_i(t) \right) +  \frac{\mu}{4}\sum_{i=1}^N\frac{|u_{i} (t) - u_{i-1} (t)|^2}{d_i(t)}   \le \frac{1}{\mu}   \|f(t,\cdot)\|_{L^{1}_x}^{2} D_N(0).
\end{equation} 
Integrating in time, we arrive at \eqref{energy-NSE} \begin{equation} \label{integrated-est-NSE}
    \begin{aligned}
         \epsilon\sum_{i=1}^{N-1}   (u_i )^2 (t) +&   \sum_{i=1}^N   \frac{d_i(t) + 2\epsilon}{\gamma-1} G_i(t) +\frac{\mu}{4}\sum_{i=1}^N\int_0^t \frac{|u_{i} (\tau) - u_{i-1} (\tau)|^2}{d_i(\tau)}~d\tau \\[1ex]
          &\le    \epsilon\sum_{i=1}^{N-1}   (u_i )^2 (0) + \sum_{i=1}^N   \frac{d_i(0) + 2\epsilon}{\gamma-1} G_i(0) + \frac{1}{\mu}\|f\|_{L^2_tL^{1}_{x}}^{2} D_N(0).
    \end{aligned}
\end{equation}\end{proof}

\begin{corollary}[Bounds on $u^\epsilon$] \label{u-bounds-NSE}
    Using the notations of the previous lemma, we have for each $i=1,...,N-1$ the following bound on the components of the velocity vector $\mathbf{u}^\ep$:
    \begin{equation} \label{ui-L1-NSE}
        \|u_i\|_{L^1(0,T^\star_\ep)} \le  \mathcal{E}_0 + \left(\|f\|_{L^2(0,T^\star_\ep;L^1(I))}^2 + t\right) \dfrac{D_N(0)}{\mu},
    \end{equation}
    and on the macroscopic velocity:
    \begin{equation} \label{dxu-L2-NSE}
      \|\partial_x u^\epsilon\|_{L^{2}((0,T^\star_\ep) \times I)}^2 \le \frac{4}{\mu} \left(\mathcal{E}_0 + \dfrac{1}{\mu}\|f\|_{L^2(0,T^\star_\ep;L^1(I))}^2 D_N(0)\right).
    \end{equation}
\end{corollary}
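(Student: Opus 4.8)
The plan is to extract both bounds directly from the discrete energy identity \eqref{energy-NSE}. Since $\gamma>1$, the two terms $\epsilon\sum_{i=1}^{N-1}(u_i)^2(t)$ and $\frac{1}{\gamma-1}\sum_{i=1}^N(d_i(t)+2\epsilon)G_i$ sitting on the left-hand side of \eqref{energy-NSE} are both nonnegative (recall $G_i\ge 0$ and $d_i(t)+2\epsilon>0$), so discarding them leaves the single a priori estimate
\[
\int_0^{T^\star_\ep}\sum_{i=1}^N\frac{|u_i(\tau)-u_{i-1}(\tau)|^2}{d_i(\tau)}\,d\tau \;\le\; \frac{4}{\mu}\Big(\mathcal E_0+\frac{1}{\mu}\|f\|_{L^2(0,T^\star_\ep;L^1(I))}^2 D_N(0)\Big).
\]
Everything below is a routine consequence of this dissipation bound.

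For the gradient estimate \eqref{dxu-L2-NSE}: from the definition \eqref{u-NSE}, the piecewise-linear profile $u^\epsilon(t,\cdot)$ has constant slope $\frac{u_i(t)-u_{i-1}(t)}{q_i(t)-q_{i-1}(t)}=\frac{u_i(t)-u_{i-1}(t)}{d_i(t)+2\epsilon}$ on the cell $[q_{i-1}(t),q_i(t))$ of length $d_i(t)+2\epsilon$, so that
\[
\|\partial_x u^\epsilon(t,\cdot)\|_{L^2(I)}^2=\sum_{i=1}^N (d_i(t)+2\epsilon)\,\frac{|u_i(t)-u_{i-1}(t)|^2}{(d_i(t)+2\epsilon)^2}=\sum_{i=1}^N\frac{|u_i(t)-u_{i-1}(t)|^2}{d_i(t)+2\epsilon}\le\sum_{i=1}^N\frac{|u_i(t)-u_{i-1}(t)|^2}{d_i(t)},
\]
using $d_i(t)+2\epsilon\ge d_i(t)>0$. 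Integrating over $(0,T^\star_\ep)$ and inserting the displayed dissipation bound yields \eqref{dxu-L2-NSE} verbatim.

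For the $L^1$-in-time estimate \eqref{ui-L1-NSE} I would combine the discrete Poincaré inequality \eqref{ui-triangle} with a weighted Young inequality: writing $|u_k-u_{k-1}|=\frac{|u_k-u_{k-1}|}{\sqrt{d_k}}\cdot\sqrt{d_k}\le\frac12\big(\frac{|u_k-u_{k-1}|^2}{d_k}+d_k\big)$ and summing over $k$, and using that $\sum_{k=1}^N d_k(t)=D_N(0)=1-2N\epsilon$ is constant in time, gives the pointwise-in-time bound
\[
|u_i(t)|\le\max_j|u_j(t)|\le\frac12\sum_{k=1}^N\frac{|u_k(t)-u_{k-1}(t)|^2}{d_k(t)}+\frac12 D_N(0).
\]
Integrating this in $t$ and again invoking the dissipation bound yields $\|u_i\|_{L^1(0,T^\star_\ep)}\lesssim\mathcal E_0+(\|f\|_{L^2(0,T^\star_\ep;L^1(I))}^2+T^\star_\ep)\frac{D_N(0)}{\mu}$, which is \eqref{ui-L1-NSE} after absorbing the universal constants ($\tfrac{2}{\mu}$, $\tfrac12$, etc.) into the stated form.

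There is no genuine obstacle here; the corollary is a direct unpacking of \eqref{energy-NSE}. The only mild care points are (i) verifying that the potential term $\frac{1}{\gamma-1}\sum(d_i+2\epsilon)G_i$ is nonnegative so it can be dropped, and (ii) in the $L^1$ step, choosing the weighted Young inequality (rather than a plain Cauchy--Schwarz) so that the right-hand side depends \emph{linearly}, not through a square root and an extra factor $T^{1/2}$, on the dissipation --- this is precisely what matches the stated form of \eqref{ui-L1-NSE}.
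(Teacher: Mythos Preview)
Your proof is correct and follows essentially the same route as the paper: both parts are extracted from the dissipation term in \eqref{energy-NSE} via (i) the explicit piecewise-linear formula for $\partial_x u^\epsilon$ together with $q_i-q_{i-1}\ge d_i$, and (ii) the discrete Poincar\'e inequality \eqref{ui-triangle} combined with a weighted Young inequality on $|u_k-u_{k-1}|=\frac{|u_k-u_{k-1}|}{\sqrt{d_k}}\sqrt{d_k}$. The only cosmetic difference is that the paper tunes the weight in Young's inequality to $\sqrt{\mu/2}$ (exactly as in the derivation of \eqref{energy-NSE}), so that the dissipation term appears with the coefficient $\tfrac{\mu}{4}$ and the stated constants in \eqref{ui-L1-NSE} come out on the nose, whereas your choice of weight $1$ gives the same structure with constants off by fixed factors of $\mu$.
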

\begin{proof}
Using again a discrete Poincaré inequality, we have thanks to \eqref{energy-NSE}, for each $i = 1, \dots , N-1$
\begin{equation} \begin{aligned}
   \int_0^t |u_i(\tau)|~d\tau &\le  \sum_{k=1}^N \int_0^t |u_k(\tau) - u_{k-1}(\tau) | ~d\tau \\[1ex]
    &\le \frac{\mu}{4} \sum_{i=k}^{N} \int_0^t \frac{|u_k(\tau) - u_{k-1}(\tau)|^2}{d_i(\tau)}~d\tau + \frac{1}{\mu}  D_N(0) t\\
    & \le \mathcal{E}_0 + \dfrac{1}{\mu}\|f\|_{L^2(0,t;L^1(I))}^2 D_N(0) + \frac{t}{\mu} D_N(0). 
\end{aligned}
\end{equation}
For $\partial_x u^\epsilon$, we can use the definition of $u^\epsilon$ from \eqref{u-NSE} to get for $t>0$, \begin{equation} \begin{aligned}
    \|\partial_x u^\epsilon(t,\cdot)\|_{L^{2}(I)}^2 &= \sum_{i=1}^{N-1} \int_{q_{i-1}(t)}^{q_i(t)}  \left|\frac{u_{i}  (t) - u_{i-1}(t)}{q_{i}(t)-q_{i-1}(t)}\right|^2\,dx = \sum_{i=1}^{N-1} \frac{|u_{i}  (t) - u_{i-1}(t)|^2}{q_{i}(t)-q_{i-1}(t)}. 
    \end{aligned}
\end{equation} 
Therefore, using $ q_i(t) - q_{i-1}(t) \geq d_i(t)$, and integrating the above expression in time, we get from \eqref{energy-NSE}
\begin{equation} 
      \|\partial_x u^\epsilon\|_{L^{2}((0,T^\star_\ep) \times I)}^2 \le \sum_{i=1}^{N-1} \int_0^{T^\star_\ep} \frac{|u_{i}  (t) - u_{i-1}(t)|^2}{d_{i}(t)}~dt \le \frac{4}{\mu} \left(\mathcal{E}_0 + \dfrac{1}{\mu}\|f\|_{L^2(0,T^\star_\ep;L^1(I))}^2 D_N(0)\right).
\end{equation} 
\end{proof}

 \begin{proposition}[Upper and lower bound on the distances $d_i$] \label{prop:global-nse}
 For any $t \in (0,T^{\star}_\epsilon)$ and any index $i \in \{1,...,N\}$, 
    \begin{equation} \label{di(t)-NSE}
0< c_1(t)\epsilon\le d_{i}(t) \le C_2(t)\epsilon,
    \end{equation} 
    with $c_1(t), C_2(t) \in (0,+\infty)$ for any finite $t>0$, and independent of $\epsilon$.
    As a consequence, the solution $t \mapsto \mathbf{U}(t) = (\mathbf{q}(t), \mathbf{u}(t), \mathbf{d}^\star)$ to \eqref{ODE_NS} obtained via the Cauchy-Lipschitz theorem exists in fact on any finite time interval $[0,T]$, $T > 0$. 
\end{proposition}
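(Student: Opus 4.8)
\emph{Structure of the argument.} The statement packages two things: the two‑sided bound \eqref{di(t)-NSE} on the gaps, and the global‑in‑time extension, which will follow once the gaps are known to stay trapped away from $0$ and $+\infty$. So the plan is to prove the bound first. At this point the available ingredients are: $\mathbf d^{\star}$ is constant in time; the total gap $\sum_{i=1}^{N}d_i(t)=q_N-q_0-2N\epsilon=1-2N\epsilon$ is independent of $t$ (and, under the scaling $N\epsilon\propto 1$, also of $\epsilon$); Lemma \ref{lemma:boundGi} gives $\|G_i\|_{L^\infty(0,T^\star_\epsilon)}\le C$ uniformly; and the energy estimate \eqref{energy-NSE} together with Corollary \ref{u-bounds-NSE} yields, on $[0,T\wedge T^\star_\epsilon]$, the bounds $\epsilon\sum_i u_i^2(t)\le \mathcal E_0+\tfrac1\mu\|f\|_{L^2_tL^1_x}^2 D_N(0)$ and $\sum_i\int_0^t |u_i-u_{i-1}|^2/d_i\,d\tau\le \tfrac4\mu(\mathcal E_0+\tfrac1\mu\|f\|_{L^2_tL^1_x}^2 D_N(0))$; in particular $\dot d_i=u_i-u_{i-1}$ is bounded (at fixed $\epsilon$) and $\dot d_i(t)-\dot d_i(0)$ is controlled.

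\emph{Core step: the two‑sided bound.} I would integrate the balance of forces \eqref{balance-intro} for the gap $d_i(t)=q_i(t)-q_{i-1}(t)-2\epsilon$, i.e.\ for $2\epsilon\ddot d_i=2\epsilon(\ddot q_i-\ddot q_{i-1})$, once in time. Using $\dot d_j/d_j=\tfrac{d}{dt}\log d_j$, the lubrication terms integrate to $\log d_i(t),\log d_{i\pm1}(t)$; the inertial term integrates to $\dot d_i(t)-\dot d_i(0)$; and Lemma \ref{lemma:boundGi} together with $f\in W^{1,\infty}$ bounds the contributions of the $G$'s and of $f$. One is thereby led to a \emph{discrete elliptic identity}: the discrete Laplacian in $i$ of the vector $\Delta_i:=\log d_i(t)-\log d_{0,i}$ equals an explicitly bounded right‑hand side, supplemented by discrete Neumann‑type relations at $i=1$ and $i=N$ coming from the two pinned particles. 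Inverting this discrete Poisson problem controls the \emph{oscillation} of $(\Delta_i)_i$; the free additive constant is then pinned by mass conservation: writing $d_i(t)=d_{0,i}e^{\Delta_i}$, the identity $\sum_i d_{0,i}(e^{\Delta_i}-1)=\sum_i d_i(t)-\sum_i d_{0,i}=0$ forces $\min_i\Delta_i\le 0\le\max_i\Delta_i$, hence $\max_i|\Delta_i|\le \operatorname{osc}_i\Delta_i$. Combined with $c_0\epsilon\le d_{0,i}\le C_0\epsilon$ from \eqref{nse-dist-assumption}, this gives $\log d_i(t)=\log d_{0,i}+O(1)$ uniformly in $i$, i.e.\ $c_1(t)\epsilon\le d_i(t)\le C_2(t)\epsilon$ with $c_1(t),C_2(t)\in(0,+\infty)$ for every finite $t$. (As an alternative for the lower bound alone, one could argue pair by pair by contradiction, showing that a collision at a finite time $t_0$ for some index $i$ would force $\int_0^{t_0}|\dot d_i|^2/d_i=+\infty$, contradicting the energy estimate; but the discrete‑Poisson route handles the inertial term and both bounds simultaneously.)

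\emph{Global extension, and the main obstacle.} Once the gaps are trapped in a compact subinterval of $(0,+\infty)$ on $[0,T^\star_\epsilon)$, the solution $t\mapsto\mathbf U(t)=(\mathbf q(t),\mathbf u(t),\mathbf d^{\star})$ remains in a compact subset of the open set $\{\min_i d_i>0\}$ on which the right‑hand side of \eqref{ODE_NS} is locally Lipschitz: the positions stay ordered in $[0,1]$, the velocities are bounded by the energy estimate, and $\mathbf d^{\star}$ is constant. The standard continuation criterion for ODEs then forces $T^\star_\epsilon=+\infty$, giving global existence on every $[0,T]$. I expect the real difficulty to be obtaining the constants $c_1(t),C_2(t)$ \emph{independently of $\epsilon$}: the crude bound on the right‑hand side of the discrete Poisson problem above is only $O(1)$ while the discrete Green's function carries a factor of size $\sim N^2\sim\epsilon^{-2}$, so this argument by itself produces only $\epsilon$‑dependent constants — which is already enough for the extension at fixed $\epsilon$ — and upgrading to $\epsilon$‑uniform bounds requires exploiting the finer structure: the cancellations/telescopings in the right‑hand side (the second differences of $(G_i)_i$ and of the inertial terms telescope), together with a propagated control of the spatial variation of $(d_i)_i$ and $(G_i)_i$ out of \eqref{nse-di-diff-assumption}, i.e.\ an entropy‑type estimate tailored to the degenerate viscosity $\mu/(1-\rho)$ that keeps $\rho^\epsilon$ uniformly bounded away from the congestion value $\rho=1$. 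This is the technical heart of the proof.
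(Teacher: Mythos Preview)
Your proposal does not close the key gap: the constants $c_1(t),C_2(t)$ in \eqref{di(t)-NSE} must be \emph{independent of $\epsilon$}, and you yourself note that your discrete--Poisson route only produces $\epsilon$--dependent constants, after which you gesture at ``entropy--type estimates'' and propagated variation bounds without carrying them out. That is precisely the content of the proposition, so as written the argument is incomplete.

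The paper's proof avoids the discrete Green's function entirely by a much simpler device. Rather than \emph{differencing} consecutive balance equations to obtain a discrete Laplacian of $\log d_i$, one \emph{sums} the balance equations \eqref{balance-intro} over a range $j=m,\dots,k-1$. The viscous terms telescope to $\mu\big(\tfrac{\dot d_k}{d_k}-\tfrac{\dot d_m}{d_m}\big)$, the interaction terms telescope to $G_m-G_k$, and after integrating in time one gets
\[
\log\!\left(\frac{d_k(\tau)}{d_k(0)}\right)
=\log\!\left(\frac{d_m(\tau)}{d_m(0)}\right)
+2\epsilon\sum_{j=m}^{k-1}\big(u_j(\tau)-u_j(0)\big)
+\int_0^\tau\!(G_k-G_m)\,ds
-\sum_{j=m}^{k-1}\int_0^\tau\!\!\int_{q_{j-1}}^{q_j}\!f\,dx\,ds.
\]
Each term on the right is bounded \emph{uniformly in $\epsilon$}: Lemma~\ref{lemma:boundGi} handles $G_k-G_m$; the force term is controlled by $\|f\|_{L^1_{t,x}}$; and the inertial sum is bounded via $\big|2\epsilon\sum_j u_j(\tau)\big|\le 2\epsilon\sum_j u_j(\tau)^2+O(1)$ together with the energy estimate \eqref{energy-NSE} and $N\epsilon=O(1)$. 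The reference index $m$ is then chosen using mass conservation $\sum_i d_i(t)=\text{const}$: if $d_k(\tau)\le d_k(0)$, some $m$ must satisfy $d_m(\tau)\ge d_m(0)$, so $\log(d_m(\tau)/d_m(0))\ge 0$ and the identity above gives $d_k(\tau)\ge d_k(0)\,e^{-C(\tau)}\ge c_0\epsilon\,e^{-C(\tau)}$. The upper bound is symmetric. No Poisson inversion, no $N^2$ factor, no entropy estimate.

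In short: your instinct to use mass conservation to pin a constant is right, but you apply it \emph{after} inverting a second--order discrete operator, which costs you the $\epsilon$--uniformity. The paper uses mass conservation \emph{to choose a comparison index} and works with a first--order telescoped identity instead, which keeps every term uniformly bounded from the outset.
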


\begin{proof}
Let $k$ and $\tau$ be given and prove the lower bound for $d_k(\tau)$. On the one hand, if $d_k(\tau)>d_k(0)$, we use the hypothesis $d_k(0)\geq a\epsilon$ \eqref{nse-dist-assumption} on the initial distances to conclude. On the other hand, if $d_k(\tau)\leq d_k(0)$, using the fact that $\sum_1^N d_i(t)$ is constant in time, one can find an index $m$ such that $d_m(\tau)\geq d_m(0)$. If $m<k$ (the converse case is similar), we sum up the balance of forces \eqref{balance-intro} from particle $i=m$ to particle $i=k-1$ and get
\begin{equation}
        2\epsilon \sum_{j=m}^{k-1}\dot{u}_{j}(t) = \frac{\dot{d}_{k}}{d_{k}}(t) - \frac{\dot{d}_{m}}{d_{m}}(t) - (G_{k} (t) - G_{m} (t)) + 2\epsilon \sum_{j=m}^{k-1} \bar f_{j}(t),
\end{equation} 
and integrate in time from time $0$ to time $\tau$, to obtain
\begin{equation} \label{bof-integrated}
     \ln \left( \frac{d_{k}(\tau)}{d_{k}(0)} \right)   =  \ln \left( \frac{d_{m}(\tau)}{d_{m}(0)} \right) + 2\epsilon \sum_{j=m}^{k-1} (u_{j} (\tau) - u_{j} (0))  + \int_{0}^{\tau} \lr{G_{k} (s) - G_{m} (s)}~ds - \sum_{j=m}^{k-1} \int_{0}^{\tau}
     \int_{q_{j-1}}^{q_{j}}f(s,x)\,dxds.
\end{equation} 
In other words,
\begin{equation} \label{bof-int}
        \frac{d_{k}(\tau)}{d_{k}(0)} = \frac{d_{m}(\tau)}{d_{m}(0)} \exp \left( 2\epsilon \sum_{j=m}^{k-1} (u_{j} (\tau) + u_{j} (0))  + \int_{0}^{\tau} \lr{G_{k} (s) - G_{m} (s)}~ds - \sum_{j=m}^{k-1} \int_{0}^{\tau} \int_{q_{j-1}}^{q_{j}}f(s,x)\,dxds \right).
\end{equation} 
To estimate the velocity term, we can use Young's inequality, exploit the energy estimate \eqref{energy-NSE} as well as the boundedness of $\mathbf{u}^\epsilon(0)$ (cf~\eqref{eq:boundu0i}):
\begin{equation}
    \begin{aligned}
        \left|2\epsilon \sum_{i=0}^{k-1} (u_{i} (\tau) - u_{i} (0))\right|
        & \le 2\epsilon \sum_{i=1}^{N-1}|u_i(t)| + 2\epsilon \sum_{i=1}^{N-1} |u_i(0)| \\[1ex] 
        &\le 2\epsilon\sum_{i=1}^{N-1}|u_i(\tau)|^2 +  2\|u_0\|_{L^\infty} \\[1ex] 
        &\le 2(\mathcal{E}_0 + \frac{1}{\mu}D_N(0)\|f\|_{L^2_t L^1_x}^2 + \|u_0\|_{L^\infty}).
    \end{aligned} \label{dk-explicit}
\end{equation}
Returning to \eqref{bof-int}, using the previous estimate, together with the bound on $G_i$ given by Lemma \ref{lemma:boundGi} and the fact that $d_{m}(\tau) \ge d_{m}(0)$ we have
\begin{equation} 
\label{di-LB-NSE}
    d_k(\tau) \geq d_k(0)\exp\left( - \left[ 2(\mathcal{E}_0 + \|u_0\|_{L^\infty}+ \frac{1}{\mu}D_N(0)\|f\|_{L^2(0, +\infty; L^1(I))}^2 )+ 2C\tau  + \|f\|_{L^1((0, +\infty) \times I)} \right] \right). 
\end{equation}
Finally, using the hypothesis $d_k(0)\geq c_0\epsilon$ \eqref{nse-dist-assumption} we obtain
\begin{equation}
    d_k(\tau) \geq c_0 \ep \exp(-C(\tau)).
\end{equation}
with $C(\tau)> 0$ independent of $\ep$, which concludes the proof of the lower bound in \eqref{di(t)-NSE}. To obtain the upper bound, one can repeat the same argument. The index $k$ and time $\tau$ being given, if $d_k(\tau) < d_k(0)$, we conclude using
the hypothesis $d_k(0) \leq C_0\epsilon$ (2.9). On the other hand, if $d_k(\tau) \geq d_k(0)$, one can find an index $m$ such that $d_m(\tau) \leq d_m(0)$ and obtain the required estimation.\\
Now, we write the ODE~\eqref{ODE_NS} as $\dfrac{d}{dt} \mathbf{U}(t) = F(t,\mathbf{U}(t))$. Thanks to the previous lower and upper bounds on the $d_i$'s, we can control the right-hand side by a linear function of $\|\mathbf{U}\|$:
\[
\|F(t,\mathbf{U}(\cdot)) \|_{\mathbb{R}^{3(N-1)}} \leq \|f \|_{L^\infty((0,t) \times I)} + \beta_\ep \|\mathbf{U}\|_{\mathbb{R}^{3(N-1)}} 
\]
with $\beta_\ep > 0$. 
So, by a Gronwall inequality on $\|\mathbf{U}\|_{\mathbb{R}^{3(N-1)}} $, we can conclude to the global-in-time existence of the solution.
\end{proof}

\section{Toward the macroscopic system} \label{sec:limit}
\subsection{Obtaining a PDE representation} \label{sec:obtainPDE}

In this subsection we introduce continuous representations for the non-linear terms in our system, and derive the PDE satisfied for all $\epsilon >0$. For convenience, we introduce the notation $P_i(t) := [q_{i}(t) - \ep, q_i(t)+\ep]$ for $i=1,...,N-1$ and $P_0(t) := [0, \epsilon]$, $P_N(t) := [1-\epsilon, 1]$. For each $t \in (0,T)$, we denote
\begin{equation}
    w_{i}(t) := \frac{u_{i} (t) - u_{i-1} (t)}{d_{i}(t)},
    \label{def:w-ss}
\end{equation}  
so that, for all $x\in (q_{i-1}(t), q_{i}(t))$,
\[
w_{i}(t) = \frac{1}{1- \rho_{i}^\ep (t)} \partial_{x}u^{\epsilon} (t,x) ,
\]
where we used the definition of $\rho^\epsilon_i$ from \eqref{rho-NSE}. Then the function $w^{\epsilon} \in C([0,T] \times I)$ is defined as 
\begin{align}
w^\epsilon(t,x)
: =& \sum_{i=1}^{N-1} \left[ \left(w_i(t)  + \dfrac{x-(q_i(t)  - \epsilon)}{2\epsilon} (w_{i+1}(t)  - w_i(t) ) \right) \mathbf{1}_{P_i(t)} (x)
+ w_{i+1}(t)  \mathbf{1}_{(q_i(t) +\epsilon, q_{i+1}(t) -\epsilon)}(x)\right] \notag \\[1ex]
 &+\left[ \frac{w_1(t) }{\epsilon}x \right] \mathbf{1}_{P_0(t)}(x) + \left[w_N(t)  - \frac{w_N(t) }{\epsilon}(x-(1 - \epsilon))\right] \mathbf{1}_{P_N(t)}(x),  \label{w-defn-nse}
\end{align}
which is depicted in Figure \ref{fig:wg-ss} below.
The interaction force $G_i^\ep(t)$, defined through \eqref{Gi-def}, can be rewritten as
\begin{equation}
    G_{i}^{\epsilon}(t)   = \left( \frac{d_{i}^{\star}+2\epsilon}{d_{i}(t)+2\epsilon} \right)^{\gamma}= \left( \frac{\rho_{i}(t)}{\rho^{\star}_{i}}\right)^{\gamma},
\end{equation}
where the definitions  \eqref{rho-NSE} and \eqref{rhost-NSE} has been used again.
The associated continuous representation $G^{\epsilon}\in C([0,T] \times I)$ is then defined as \begin{equation} \begin{aligned}  \label{G-defn-nse}
G^\epsilon(t,x)
: =& \sum_{i=1}^{N-1} \left[ \left(G_i(t)  + \dfrac{x-(q_i(t)  - \epsilon)}{2\epsilon} (G_{i+1}(t)  - G_i(t) ) \right) \mathbf{1}_{P_i(t)}(x)  + G_{i+1}(t)  \mathbf{1}_{(q_i(t) +\epsilon, q_{i+1}(t) -\epsilon)}(x) \right] \\[1ex]
 &+\left[ \frac{G_1(t) }{\epsilon}x \right] \mathbf{1}_{P_0(t)}(x) + \left[G_N(t)  - \frac{G_N(t) }{\epsilon}(x-(1 - \epsilon)\right] \mathbf{1}_{P_N(t)}(x),
 \end{aligned}
 \end{equation}
 and depicted in Figure \ref{fig:wg-ss} as well.
  \begin{figure}[ht]
    \centering
    \includegraphics[width=1\linewidth]{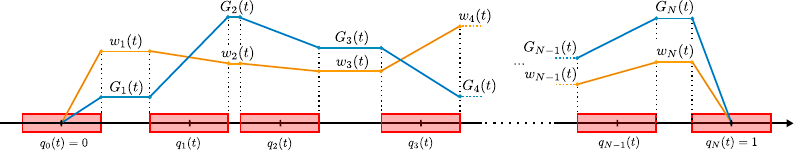}
    \caption{$w^\epsilon$ and $G^\epsilon$ at time $t$}
    \label{fig:wg-ss}
\end{figure}\\
We also introduce an alternative representation of the density, namely the volume fraction $\chi^\epsilon$:
\begin{equation}\label{def-vol}
   \chi^\epsilon(t,x) := \sum_{i=0}^{N}\mathbf{1}_{P_i(t)}(x).
\end{equation}

  \begin{figure}[ht]
    \centering
    \includegraphics[width=1\linewidth]{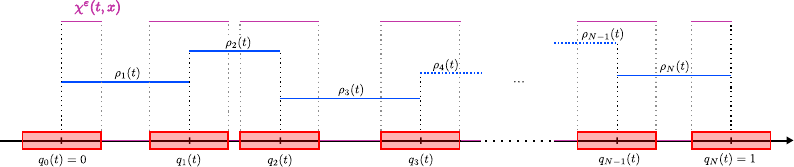}
    \caption{$\rho^\epsilon$ (blue) and $\chi^\epsilon$ (pink) at time $t$}
    \label{fig:rhochi-ss}
\end{figure}

To obtain a suitable PDE representation, we finally introduce a new velocity $v^\epsilon$ defined as (see Figure \ref{fig:veps-NSE} below)
\begin{equation} \label{def-veps}
    v^{\epsilon}(t,x) = \sum_{i=1}^{N-1} u_{i}(t)\mathbf{1}_{P_i(t)}(x) + \sum_{i=1}^{N} \left[ \frac{u_{i}(t) - u_{i-1}(t)}{d_{i}(t)}  \right](x-q_{i-1}(t)-\epsilon)\mathbf{1}_{(q_{i-1}(t)+\epsilon, q_{i}(t)-\epsilon)}(x).
\end{equation} 
 \begin{figure}[ht]
	\centering
	\includegraphics[scale= 1.2]{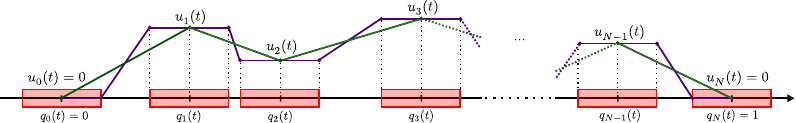}
	\caption{Depiction of $v^\epsilon$ (purple) and $u^\epsilon$ (green) at time $t$. Note that $v^\epsilon$ is constant on each particle.}
	\label{fig:veps-NSE}
\end{figure}
This representation $v^\ep$ of the velocity allows us to formulate an approximate momentum equation for any $\epsilon > 0$ on a common ``grid'', since we define $\partial_x w^\epsilon$ and $\partial_x G^\epsilon$ to be constant on each particle. On the other hand, $u^\epsilon$ is the natural approximate velocity to obtain the transport and conservation equations. In Section \ref{sec:uniform-NSE}, we will show that $v^\epsilon$ and $u^\epsilon$ converge to the same limit, meaning that both are valid macroscopic representations of the limit velocity field.

We can now derive the approximate PDE system that we will eventually use to pass to the limit $\epsilon \to 0$ and obtain the macroscopic model. 
\begin{proposition}[Approximate PDE system] \label{prop:NSE-eqn-eps-copy}
The following system is satisfied in $\mathcal{D}'([0,T) \times I)$ for any $\epsilon > 0$:
\begin{subnumcases} {\label{NSE-eps}}
     \partial_{t} \rho^\epsilon + \partial_{x}(\rho^\epsilon u^\epsilon) = 0,  \label{NSE-eps-cons} \\[1ex]
        \partial_{t} (\chi^\epsilon v^\epsilon) + \partial_{x}\left( \chi^\epsilon (v^\epsilon)^{2} \right) -  \partial_{x} w^\epsilon + \partial_{x} G^\epsilon = f^\epsilon, \label{NSE-eps-mom} \\[1ex]  \partial_{t} \rho^{\star,\epsilon} + u^\epsilon \partial_{x} \rho^{\star,\epsilon} = 0 \label{NSE-eps-transport},
\end{subnumcases}
where $f^\epsilon(t,x) = \sum_{i=1}^{N-1} \bar{f}_i(t) \mathbf{1}_{P_i(t)}(x)$.
\end{proposition}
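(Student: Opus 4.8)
\emph{Proof plan.} Every function in \eqref{NSE-eps} is piecewise defined with respect either to the moving partition $\{[q_{i-1}(t),q_i(t))\}_{i=1}^{N}$ (for $\rho^\epsilon$ and $\rho^{\star,\epsilon}$) or to the moving cells $P_i(t)$ (for $\chi^\epsilon$, $v^\epsilon$, $w^\epsilon$, $G^\epsilon$ and $f^\epsilon$), and \emph{all} of the associated interfaces are transported by the single velocity field $u^\epsilon$, in the sense that $\dot q_i(t)=u_i(t)=u^\epsilon(t,q_i(t))$ for $i=1,\dots,N-1$, while $q_0,q_N$ are fixed. Beyond this, the only ingredients I would use are the microscopic force balance \eqref{balance-intro} and the algebraic identity $\rho_i(t)\,(q_i(t)-q_{i-1}(t))=2\epsilon$ (and its analogue for $\rho^{\star,\epsilon}_i$). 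The plan is to test each of the three equations against an arbitrary $\phi\in C^\infty_c((0,T)\times I)$, shift the $t$- and $x$-derivatives onto $\phi$ by the Leibniz rule on each moving cell, and verify that all interface and flux contributions cancel.

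\emph{Continuity and transport equations.} For \eqref{NSE-eps-cons}: in the interior of each strip $(q_{i-1}(t),q_i(t))$ the classical part of $\partial_t\rho^\epsilon+\partial_x(\rho^\epsilon u^\epsilon)$ equals $\dot\rho_i(t)+\rho_i(t)\,\partial_x u^\epsilon$, which vanishes after differentiating $\rho_i(t)=2\epsilon/(q_i(t)-q_{i-1}(t))$ in $t$ and using that $u^\epsilon$ is there affine with slope $(u_i-u_{i-1})/(q_i-q_{i-1})$; across each interface $x=q_i(t)$ the distributional identity reduces to the Rankine--Hugoniot relation $\dot q_i\,(\rho_{i+1}-\rho_i)=(\rho_{i+1}-\rho_i)\,u^\epsilon(t,q_i(t))$, valid because $u^\epsilon$ is continuous and $\dot q_i=u_i=u^\epsilon(t,q_i(t))$. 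For \eqref{NSE-eps-transport}: the coefficients $\rho^{\star,\epsilon}_i$ are time-independent, so $\rho^{\star,\epsilon}$ carries no classical $t$-variation; the product $u^\epsilon\,\partial_x\rho^{\star,\epsilon}$ is well defined as the continuous function $u^\epsilon$ paired with the measure $\sum_i(\rho^{\star,\epsilon}_{i+1}-\rho^{\star,\epsilon}_i)\,\delta_{x=q_i(t)}$, and $\partial_t\rho^{\star,\epsilon}+u^\epsilon\partial_x\rho^{\star,\epsilon}=0$ is again just $\dot q_i=u^\epsilon(t,q_i(t))$ at each interface. No spurious flux arises at $x=0,1$, since $q_0,q_N$ are fixed and $u^\epsilon(t,0)=u^\epsilon(t,1)=0$.

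\emph{Momentum equation.} This is the substantive part. I would first simplify the inertial terms: since $v^\epsilon$ equals the constant $u_i(t)$ on $P_i(t)$ for $i=1,\dots,N-1$ and vanishes on the gaps, where $\chi^\epsilon=0$, one has $\chi^\epsilon v^\epsilon=\sum_{i=1}^{N-1}u_i\,\mathbf{1}_{P_i(t)}$ and $\chi^\epsilon(v^\epsilon)^2=\sum_{i=1}^{N-1}u_i^2\,\mathbf{1}_{P_i(t)}$ — in particular the jumps of $v^\epsilon$ never enter, because they occur exactly where $\chi^\epsilon=0$. Next, $w^\epsilon$ and $G^\epsilon$ are continuous and piecewise affine, so $\partial_x w^\epsilon$ and $\partial_x G^\epsilon$ lie in $L^\infty$ with \emph{no} singular part, equal respectively to $(w_{i+1}-w_i)/(2\epsilon)$ and $(G_{i+1}-G_i)/(2\epsilon)$ on $P_i(t)$ and to $0$ on the gaps. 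Testing \eqref{NSE-eps-mom} against $\phi$ and using the Leibniz identities $\int_{P_i(t)}\partial_t\phi\,dx=\frac{d}{dt}\int_{P_i(t)}\phi\,dx-\dot q_i\int_{P_i(t)}\partial_x\phi\,dx$ and $\int_{P_i(t)}\partial_x\phi\,dx=\phi(t,q_i+\epsilon)-\phi(t,q_i-\epsilon)$, the interface terms generated by $\partial_t(\chi^\epsilon v^\epsilon)$ carry the factor $u_i\dot q_i$ and are cancelled exactly by the flux interface terms $-u_i^2$ coming from $\partial_x(\chi^\epsilon(v^\epsilon)^2)$, because $\dot q_i=u_i$. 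What is left, for each $i\in\{1,\dots,N-1\}$, is precisely the force balance \eqref{balance-intro} divided by $2\epsilon$, localized on $P_i(t)$: $\dot u_i\,(=\ddot q_i)$ on the left, and on the right the lubrication, repulsion and external-force terms, reproduced cell by cell by $-\partial_x w^\epsilon$, $+\partial_x G^\epsilon$ and $f^\epsilon$. The two wall cells $P_0(t)=[0,\epsilon]$ and $P_N(t)=[1-\epsilon,1]$ are handled using $u_0=u_N=0$ together with $w^\epsilon(t,0)=w^\epsilon(t,1)=G^\epsilon(t,0)=G^\epsilon(t,1)=0$, which kill the corresponding boundary/interface terms; as these layers shrink onto the endpoints, it suffices to test with $\phi$ supported in the open interior, and this is all that is needed when passing to the limit.

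\emph{Main obstacle.} The real work is the bookkeeping: differentiating in time the integrals over the moving cells $P_i(t)$ and strips $(q_{i-1}(t),q_i(t))$, and checking that \emph{every} interface and flux term disappears. The clean reason this succeeds is that one and the same field $u^\epsilon$ transports all the interfaces, so each Rankine--Hugoniot condition holds automatically; the two lesser subtleties are that the discontinuities of $v^\epsilon$ contribute nothing (they sit on $\{\chi^\epsilon=0\}$) and that the continuity of $w^\epsilon$ and $G^\epsilon$ means $\partial_x w^\epsilon$ and $\partial_x G^\epsilon$ carry no Dirac masses, so that the momentum equation genuinely collapses, cell by cell, to \eqref{balance-intro}.
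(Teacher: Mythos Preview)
Your approach is correct and essentially the same as the paper's: both compute the distributional derivatives of the piecewise-defined quantities directly and verify that all interface contributions cancel because every interface is transported by $u^\epsilon$ (i.e.\ $\dot q_i=u_i$), that $\chi^\epsilon v^\epsilon=\sum_{i=1}^{N-1}u_i\mathbf{1}_{P_i}$ so the jumps of $v^\epsilon$ never enter, and that $w^\epsilon,G^\epsilon$ are continuous so $\partial_x w^\epsilon,\partial_x G^\epsilon$ carry no Dirac part. The paper writes out the Dirac masses explicitly rather than phrasing things via Leibniz/Rankine--Hugoniot, but the content is identical, including the handling of the wall cells $P_0,P_N$ (the paper simply writes $\partial_x w^\epsilon=\sum_{i=1}^{N-1}\partial_x w^\epsilon\,\mathbf{1}_{P_i}$ without comment, tacitly ignoring the contribution on $P_0,P_N$ just as you note it is irrelevant in the limit).
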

\begin{proof}
  For the continuity equation \eqref{NSE-eps-cons}, recall that $\rho^\epsilon(t,x) = \sum_{i=1}^{N} \rho_i(t) \mathbf{1}_{[q_{i-1}(t), q_i(t))}(x)$. Then using $\rho_i(t) = 2\epsilon/(d_i(t)+2\epsilon)$, a straightforward computation shows that, for any $t>0$ and any $x\in [q_{i-1}(t), q_i(t))$,
    \begin{equation*}
        \rho_i'(t) = - \frac{2\epsilon}{(d_i(t)+2\epsilon)^2} (u_{i}(t) - u_{i-1}(t)) =- \rho_i(t) \partial_x u^\ep(t,x) \mathbf{1}_{[q_{i-1}(t), q_i(t))}(x),
    \end{equation*} where we have used $\dot{d}_i(t) = u_i(t)-u_{i-1}(t).$ Therefore in the distributional sense,
    \begin{align*}
        \partial_t \rho^\epsilon(t,\cdot) = -\rho^\epsilon(t,\cdot) \partial_x  u^\epsilon(t,\cdot) + \sum_{i=1}^N \rho_i(t) \left( u_i(t) \delta_{\{x=q_{i}(t)\}} - u_{i-1}(t) \delta_{\{x=q_{i-1}(t)\}} \right).
    \end{align*}
    Finally, notice that 
    \begin{align*}
        \partial_x(\rho^\epsilon u^\epsilon)(t,\cdot) &= \rho^\epsilon (t,\cdot)\partial_x u^\epsilon(t,\cdot) + u^\epsilon (t,\cdot)\left( \sum_{i=1}^N \rho_i(t) \left[ \delta_{\{x=q_{i-1}(t)\}} - \delta_{\{x=q_{i}(t)\}} \right] \right) \\[1ex] 
        &= \rho^\epsilon(t,\cdot) \partial_x u^\epsilon(t,\cdot) + \sum_{i=1}^N \rho_i(t) \left[ u_{i-1}(t)\delta_{\{x=q_{i-1}(t)\}} - u_i(t)\delta_{\{x=q_{i}(t)\}} \right],
    \end{align*}
    and so \eqref{NSE-eps-cons} holds. 
    
    We now move on to the transport equation \eqref{NSE-eps-transport}. For $\rho^{\star, \epsilon}$, we have that for any $t > 0$, \begin{equation} \label{dt-rhostar}
        \partial_t \rho^{\star, \epsilon}(t,\cdot) 
        = \sum_{i=1}^{N} \rho_i^\star[  u_i(t)  \delta_{\{x=q_{i}(t)\}} - u_{i-1}(t) \delta_{\{x=q_{i-1}(t)\}}],
    \end{equation} 
    while
    \begin{equation} \label{dx-rhostar}
        (\partial_x \rho^{\star, \epsilon})(t,\cdot) 
        = \sum_{i=1}^{N} \rho_i^\star [ \delta_{\{x=q_{i-1}(t)\}} 
        {\color{blue}-}\delta_{\{x=q_{i}(t)\}} ].
    \end{equation} Multiplying by $u^\epsilon$, we get
    \begin{equation}\label{udx-rhostar}
      ( u^\epsilon \partial_x \rho^{\star, \epsilon})(t,\cdot) = \sum_{i=1}^{N} \rho_i^\star [ u_{i-1}(t)\delta_{\{x=q_{i-1}(t)\}} -   u_i(t)\delta_{\{x=q_{i}(t)\}} ],
    \end{equation}
    and therefore, adding \eqref{dt-rhostar} and \eqref{dx-rhostar} leads to the transport equation~\eqref{NSE-eps-transport}. 
    
    At last, we drive  the momentum equation \eqref{NSE-eps-mom}. Summing up the balance of forces \eqref{balance-intro} for each particle and dividing by $2\epsilon$ gives us, for all $t>0$ and $x\in[0,1]$,
\begin{equation}
    \sum_{i=1}^{N-1} \dot{u}_i (t) \mathbf{1}_{ P_i(t)}(x) =  \mu\sum_{i=1}^{N-1}\partial_{x}w^{\epsilon} (t,x)\mathbf{1}_{ P_i(t)}(x) - \sum_{i=1}^{N-1}\partial_{x}G^{\epsilon}(t,x)\mathbf{1}_{ P_i(t)}(x) +  \sum_{i=1}^{N-1}\bar{f}_i(t)\mathbf{1}_{P_i(t)}(x).
    \label{eq:momentum_disc}
\end{equation}
    Now using the definition of $v^\epsilon$ from \eqref{def-veps}, we have in the distributional sense, for all $t>0$,
    \begin{equation}
        \partial_t (\chi^\epsilon v^\epsilon)(t,\cdot) = 
 \sum_{i=1}^{N-1} \left[ \dot{u}_i  (t)\mathbf{1}_{P_i(t)} - (u_i(t) )^2 \delta_{\{x=q_{i}(t)-\epsilon\}} + (u_i (t))^2 \delta_{\{x=q_{i}(t)+\epsilon\}} \right],
    \end{equation} 
while 
    \begin{equation}
        \partial_x (\chi^\epsilon (v^\epsilon)^2)(t,\cdot)=   \sum_{i=1}^{N-1} \left[ (u_i(t) )^2 \delta_{\{x=q_{i}(t)-\epsilon\}} - (u_i(t))^2 \delta_{\{x=q_{i}(t)+\epsilon\}} \right].
    \end{equation} 
Therefore, we have, for all $t>0$ and $x\in[0,1]$,
    \begin{equation}
         \partial_t (\chi^\epsilon v^\epsilon)(t,x) + \partial_x (\chi^\epsilon (v^\epsilon)^2)(t,x) =  \sum_{i=1}^{N-1} \dot{u}_i (t) \mathbf{1}_{P_i(t)}(x).
    \end{equation}
Since $w^\epsilon$ and $G^\epsilon$ are piecewise linear, continuous and constant in between particles, we have 
\begin{equation*}
        \partial_x w^\epsilon (t,\cdot)= \sum_{i=1}^{N-1}\partial_{x}w^{\epsilon}(t,\cdot) \mathbf{1}_{ P_i(t)} \text{ and } \partial_x G^\epsilon (t,\cdot)= \sum_{i=1}^{N-1}\partial_{x}G^{\epsilon} (t,\cdot)\mathbf{1}_{ P_i(t)}
    \end{equation*} almost everywhere in $(0,T) \times I$, from which we conclude using \eqref{eq:momentum_disc}.
\end{proof}

\subsection{Uniform bounds on the macroscopic variables}
\label{sec:uniform-NSE}
In this subsection we derive uniform in $\ep$ estimates for the functions $\rho^\epsilon, u^\epsilon,\rho^{\star,\epsilon}$ as well as the non-linear functions $w^\epsilon$ and $G^\epsilon$ introduced above. 

Let us first summarize what bounds we currently have on the macroscopic functions as a consequence of our estimates so far. 
\begin{corollary}
Let us assume the hypotheses of Theorem~\ref{thm:nse}. With the notations of the previous subsection, we have the following bounds:
\begin{align}
    0 < \dfrac{2}{2+ C_2(t)} \leq \rho^\ep(t,\cdot) \leq \dfrac{2}{c_1(t) + 2} < 1,   \label{rho-bound-nse}  \\[1ex]   
      0<\frac{2}{C_0+2}  \le ~\rho^{\star, \epsilon}(t,\cdot) \le 1, \label{rhost-bound-nse} \\
      \|G^\epsilon\|_{L^\infty_{t,x}} \le \left( \frac{C_0+2}{2} \right)^\gamma, \label{Geps-Linfty-NSE}\\[1ex]
      \|u^\epsilon\|_{L^2_{t}H^1_x} \le  \frac{4}{\mu} \left(\mathcal{E}_0 + \dfrac{1}{\mu}\|f\|_{L^2_tL^1_x}^2 D_N(0)\right) , \label{ueps-H1-NSE}
\end{align}
where $C_0$ and $c_1(t), C_2(t) >0$ have been previously defined in Propositions~\ref{prop:ID_micro} and~\ref{prop:global-nse}.
\end{corollary}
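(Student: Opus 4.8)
The plan is to assemble the four stated bounds directly from results already proved, using the algebraic identities $\rho_i^\epsilon(t)=\tfrac{2\epsilon}{d_i(t)+2\epsilon}$ from \eqref{rho-NSE} and $\rho_i^{\star,\epsilon}=\tfrac{2\epsilon}{d_{0,i}^{\star}+2\epsilon}$ from \eqref{rhost-NSE}, together with the elementary monotonicity of $s\mapsto \tfrac{2\epsilon}{s+2\epsilon}$ on $[0,\infty)$.

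For the density bound \eqref{rho-bound-nse}: I would invoke Proposition~\ref{prop:global-nse}, which gives $c_1(t)\epsilon\le d_i(t)\le C_2(t)\epsilon$ for every $i$ and every $t$ in an arbitrary finite interval $(0,T)$ (using here the global-in-time existence part of that proposition to replace $T^\star_\epsilon$ by $T$). Monotonicity then yields $\tfrac{2}{C_2(t)+2}\le \rho_i^\epsilon(t)\le \tfrac{2}{c_1(t)+2}<1$, and since $\rho^\epsilon(t,\cdot)$ is, pointwise in $x$, equal to one of the $\rho_i^\epsilon(t)$, the same two-sided bound holds for $\rho^\epsilon(t,\cdot)$. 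For \eqref{rhost-bound-nse} the argument is identical but simpler: the critical distances are constant in time and satisfy $0\le d_{0,i}^{\star}\le C_0\epsilon$ by construction (Proposition~\ref{prop:ID_micro}, estimate \eqref{eq:boundd0}), so $\tfrac{2}{C_0+2}\le \rho_i^{\star,\epsilon}\le 1$.

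For \eqref{Geps-Linfty-NSE}: the continuous representation $G^\epsilon(t,\cdot)$ built in \eqref{G-defn-nse} is, on each subinterval of its definition, a convex combination of two consecutive values among $\{0,G_1(t),\dots,G_N(t)\}$, hence $\|G^\epsilon\|_{L^\infty_{t,x}}\le\max_i\|G_i\|_{L^\infty(0,T)}$; Lemma~\ref{lemma:boundGi}, and more precisely the pointwise estimate \eqref{Geps-bound-NSE}, bounds each $G_i$ by $\big(\tfrac{C_0+2}{2}\big)^\gamma$. For the velocity bound \eqref{ueps-H1-NSE}: Corollary~\ref{u-bounds-NSE}, specifically \eqref{dxu-L2-NSE}, already controls $\|\partial_x u^\epsilon\|_{L^2((0,T)\times I)}$ by the stated right-hand side, and since $u^\epsilon(t,\cdot)$ vanishes at both endpoints of $I$ (the extremal particles being fixed), Poincaré's inequality on $(0,1)$ upgrades this to a bound on the full norm $\|u^\epsilon\|_{L^2_tH^1_x}$.

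There is no genuine obstacle here: the corollary is a bookkeeping step that merely collects Propositions~\ref{prop:ID_micro} and~\ref{prop:global-nse}, Lemma~\ref{lemma:boundGi}, and Corollary~\ref{u-bounds-NSE}. The one point worth flagging is that the constants $c_1(t),C_2(t)$ entering \eqref{rho-bound-nse} deteriorate as $t$ grows, so the resulting density bounds are uniform in $\epsilon$ but only locally uniform in time; this is the reason the subsequent compactness analysis will have to be performed on finite time horizons.
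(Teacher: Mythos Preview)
Your proposal is correct and matches the paper's approach: the corollary is stated without proof in the paper precisely because it is, as you say, a bookkeeping step that collects the bounds from Propositions~\ref{prop:ID_micro} and~\ref{prop:global-nse}, Lemma~\ref{lemma:boundGi}, and Corollary~\ref{u-bounds-NSE}. Your observation that the piecewise-affine structure of $G^\epsilon$ makes it a convex combination of the $G_i$'s (and zero), and your use of Poincar\'e to pass from the $\partial_x u^\epsilon$ bound to the full $H^1_0$ norm, are exactly the small details one needs to fill in.
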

In order to pass to the limit in the non-linear terms of \eqref{NSE-eps-cons}-\eqref{NSE-eps-transport}, we will need further bounds on $\partial_x G^\epsilon$ and the densities $\rho^\epsilon, \rho^{\star,\epsilon}$.
We now derive a bound for $\partial_x G^\epsilon$ which is correlated to a control of the increments $d_{i+1} - d_i$. 
\begin{lemma}  \label{lem:NSE-dxG} We have
 \begin{equation} \label{di-diff-t}
        \int_0^t |d_{i+1}(\tau) - d_i(\tau)|~d\tau \le C(t)\epsilon^2, 
    \end{equation} 
    where $C(t) \in (0,+\infty)$ for all $t> 0$, and therefore 
    \begin{equation}
    \|\partial_{x}G^{\epsilon}\|_{L^{1}(0,T;L^\infty(I))} \le C(T),
    \end{equation}
    for some positive constant $C(T)$ independent of $\ep$.
\end{lemma}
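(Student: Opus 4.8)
The plan is to control the time-integrated increments $|d_{i+1}(\tau)-d_i(\tau)|$ by deriving a differential inequality for this quantity from the balance of forces, and then to translate that bound into an $L^1_t L^\infty_x$ estimate for $\partial_x G^\epsilon$ via the explicit piecewise-linear form of $G^\epsilon$.

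\emph{Step 1: a difference equation for $d_{i+1}-d_i$.} Writing the balance of forces \eqref{balance-intro} for particles $i$ and $i+1$ and subtracting, and using $\dot d_i = u_i - u_{i-1}$ so that $\ddot d_i = \dot u_i - \dot u_{i-1}$, one obtains a second-order ODE for the increment $\delta_i(t) := d_{i+1}(t) - d_i(t)$. The lubrication terms on the right-hand side combine into a discrete second difference of $\dot d_j/d_j$, the repulsive terms into a second difference of the $G_j$'s, and the forcing into a second difference of the $\bar f_j$'s. The key structural point is that $\partial_x w^\epsilon$ and $\partial_x G^\epsilon$, being \emph{constant on each particle} by construction \eqref{w-defn-nse}, \eqref{G-defn-nse}, encode exactly these second differences; heuristically the discrete momentum equation \eqref{eq:momentum_disc} already says $2\epsilon \dot u_i = \mu\,(\text{second difference of } \dot d/d) - (\text{second difference of } G) + 2\epsilon\,(\text{second difference-type forcing})$ localized on $P_i$.

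\emph{Step 2: integrate and use the energy estimate.} Integrate the increment equation twice in time (or once, after a summation-by-parts / telescoping trick, to recover a quantity analogous to the discrete energy). The initial-data hypothesis \eqref{nse-di-diff-assumption} (or \eqref{di-diff-t=0}) gives $|d_{0,i+1}-d_{0,i}| \le C_0\epsilon^2$, which supplies the $O(\epsilon^2)$ seed. The energy estimate \eqref{energy-NSE} controls $\int_0^t \sum_i |u_i - u_{i-1}|^2/d_i\,d\tau$, hence, together with the lower bound $d_i(\tau)\ge c_1(\tau)\epsilon$ from Proposition~\ref{prop:global-nse}, controls the lubrication-type contributions; Lemma~\ref{lemma:boundGi} controls the $G_j$ contributions; and the Lipschitz regularity of $f$ together with $q_i - q_{i-1}\le 2\epsilon/\delta$ gives $|\bar f_{i+1}-\bar f_i| = O(\epsilon)$, whence the forcing contribution to $\int_0^t|\delta_i|$ is $O(\epsilon^2)$ as well. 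Collecting terms and absorbing, one arrives at \eqref{di-diff-t}.

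\emph{Step 3: from increments to $\partial_x G^\epsilon$.} On each panel where $G^\epsilon$ is affine with slope $(G_{i+1}-G_i)/(2\epsilon)$, we have $|G_{i+1}-G_i| \le \gamma \max_j\bigl(\tfrac{d_j^\star+2\epsilon}{d_j+2\epsilon}\bigr)^{\gamma-1} \cdot \tfrac{|(d_{i+1}^\star - d_{i+1}) - (d_i^\star - d_i)|}{2\epsilon+ \min_j(d_j+2\epsilon)}$ by the mean value theorem; using $|d_{i+1}^\star - d_i^\star|\le C_0\epsilon^2$ (assumption \eqref{nse-di-diff-assumption}), the lower bound on the $d_j$'s, and \eqref{di-diff-t}, the slope is $O(\epsilon)$ times a quantity whose time integral is $O(\epsilon)$, so $\|\partial_x G^\epsilon(t,\cdot)\|_{L^\infty_x} \le C\,\epsilon^{-1}\max_i\bigl(|\delta_i(t)| + \epsilon^2\bigr)$ up to uniform constants, and integrating in time yields $\|\partial_x G^\epsilon\|_{L^1_t L^\infty_x}\le C(T)$.

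\emph{Main obstacle.} The delicate point is Step 2: closing the differential inequality for $\delta_i$ uniformly in both $\epsilon$ and $N$. The lubrication second-difference term is the dangerous one, since it carries a factor $1/d_i \sim 1/\epsilon$; one must be careful to pair it against the $L^2$-in-time dissipation from \eqref{energy-NSE} (with the correct $\sqrt{d_i}$ weights, as in the forcing estimate in the proof of \eqref{energy-NSE}) rather than bounding it crudely, and to keep the telescoping structure so that summing over $i$ does not lose a factor of $N$. Getting the powers of $\epsilon$ to line up — $O(\epsilon^2)$ for $\int_0^t|\delta_i|$, translating to $O(1)$ for $\|\partial_x G^\epsilon\|_{L^1_t L^\infty_x}$ after division by the $2\epsilon$ panel width — is exactly where the assumptions \eqref{nse-di-diff-assumption} on the initial increments and the lower bound $d_i \ge c_1(t)\epsilon$ are both essential.
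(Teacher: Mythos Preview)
Your Step~3 is essentially correct and matches the paper: on each particle panel one has $|\partial_x G^\epsilon|\le C\epsilon^{-2}\big(|d_{i+1}^\star - d_i^\star| + |d_{i+1}(t)-d_i(t)|\big)$, so the whole lemma does reduce to the increment bound \eqref{di-diff-t}. The gap is in Steps~1--2.

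Subtracting the balance of forces for particles $i$ and $i+1$ gives an equation for $\ddot d_{i+1}=\dot u_{i+1}-\dot u_i$, \emph{not} for $\ddot\delta_i$ as you claim; but even setting this slip aside, the resulting lubrication contribution is a second difference of $\dot d_j/d_j$ carrying an overall factor $(2\epsilon)^{-1}$. The energy estimate \eqref{energy-NSE} only controls the \emph{sum} $\sum_i\int_0^t|\dot d_i|^2/d_i$, not individual terms, so you cannot extract a pointwise-in-$i$ bound strong enough to beat that $1/\epsilon$. Similarly, invoking Lemma~\ref{lemma:boundGi} gives only $|G_{i+2}-2G_{i+1}+G_i|\le C$, which after division by $2\epsilon$ and two time-integrations yields $O(t^2/\epsilon)$, nowhere near the required $O(\epsilon^2)$. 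Your ``telescoping trick'' remark suggests you sense something structural is needed, but you do not identify what.

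The missing idea is that the paper never subtracts: it uses the balance of forces \eqref{balance-intro} for the \emph{single} particle $i$ and observes that the lubrication term there is an exact time derivative,
\[
\mu\Big(\frac{u_{i+1}-u_i}{d_{i+1}} - \frac{u_i-u_{i-1}}{d_i}\Big)
=\mu\Big(\frac{\dot d_{i+1}}{d_{i+1}} - \frac{\dot d_i}{d_i}\Big)
=\mu\,\frac{d}{dt}\ln\!\Big(\frac{d_{i+1}}{d_i}\Big).
\]
One time-integration then gives a closed expression for $\ln(d_{i+1}(t)/d_i(t))$, with right-hand side $\ln(d_{i+1}(0)/d_i(0)) - \int_0^t(G_{i+1}-G_i)\,d\tau + \int_0^t\!\int_{P_i}f - 2\epsilon(u_i(t)-u_i(0))$. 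Using $1-1/x\le\ln x\le x-1$ together with $d_j\sim\epsilon$ from Proposition~\ref{prop:global-nse} converts this into a linear inequality for $|d_{i+1}(t)-d_i(t)|$ in which the $G$-term on the right is the only unbounded piece. Feeding in the Step-3 bound $|G_{i+1}-G_i|\le C\epsilon^{-1}(\epsilon^2 + |d_{i+1}-d_i|)$ then yields a Gronwall inequality for $\int_0^t|d_{i+1}-d_i|\,d\tau$, with the $O(\epsilon^2)$ seed from \eqref{di-diff-t=0} and the $L^1_t$ bound on $u_i$ from Corollary~\ref{u-bounds-NSE}. The logarithm is precisely what makes the lubrication term harmless; without it the powers of $\epsilon$ do not line up.
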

\begin{proof}
 We first estimate $\partial_x G^\epsilon$ on each interval $(q_i(t) - \epsilon, q_i(t)+\epsilon)$ using the definition $G_i(t) = \left( \frac{d_i^\star + 2\epsilon}{d_i(t)+2\epsilon} \right)^\gamma$.

        \begin{align}  \notag
|\partial_{x}G^{\epsilon}\mathbf{1}_{(q_{i}(t)  - \epsilon, q_{i}(t) + \epsilon)}| 
            &= \left| \frac{G_{i+1}  - G_{i} }{2\epsilon} \right| 
            = \left| \frac{\left( \frac{d_{i+1}^{\star}+2\epsilon}{d_{i+1}+2\epsilon} \right)^{\gamma} - \left( \frac{d_{i}^{\star}+2\epsilon}{d_{i}+2\epsilon} \right)^{\gamma}}{2\epsilon} \right| \\[1ex] 
            &= \left| \frac{  ( d_{i+1}^{\star}+2\epsilon)^{\gamma} (d_{i}+2\epsilon)^{\gamma} - (d_{i}^{\star} + 2\epsilon)^{\gamma}(d_{i+1}+2\epsilon)^{\gamma} }  {2\epsilon (d_{i+1}+2\epsilon)^{\gamma} (d_{i}+2\epsilon)^{\gamma} } \right|.  \label{dxG-1}
        \end{align}
    Suppose firstly that $( d_{i+1}^{\star}+2\epsilon) (d_{i}+2\epsilon) > (d_{i}^{\star} + 2\epsilon)(d_{i+1}+2\epsilon)$. Using the inequality $|a^\gamma - b^\gamma| \le \gamma |a-b|a^{\gamma-1}$ for $a \ge b >0$ and $\gamma \ge 1$, we get
    \begin{equation*}
        \begin{aligned}
           |\partial_{x}G^{\epsilon}\mathbf{1}_{(q_{i}(t) - \epsilon, q_{i}(t) + \epsilon)}| &\le \frac{| \gamma ((d_{i} + 2\epsilon)(d_{i+1}^{\star} - d_{i} ^{\star}) - (d_{i}^{\star} + 2\epsilon)(d_{i+1}-d_{i})   ) (d_{i+1}^{\star} + 2\epsilon)^{\gamma -1}(d_{i}+2\epsilon)^{\gamma -1}|}{2\epsilon (d_{i+1}+2\epsilon)^{\gamma} (d_{i}+2\epsilon)^{\gamma} } \\[1ex] 
           & \\
           &= \frac{|\gamma ((d_{i} + 2\epsilon)(d_{i+1}^{\star} - d_{i} ^{\star}) - (d_{i}^{\star} + 2\epsilon)(d_{i+1}-d_{i}))|}{2\epsilon (d_{i+1}+2\epsilon) (d_{i}+2\epsilon) } \left( \frac{d_{i+1}^{\star}+2\epsilon}{d_{i+1}+2\epsilon} \right)^{\gamma -1} \\
           &= |H_{i}| \left( \frac{d_{i+1}^{\star}+2\epsilon}{d_{i+1}+2\epsilon} \right)^{\gamma -1}\leq |H_{i}| \left( \frac{C_0+2}{2} \right)^{\gamma-1},
        \end{aligned}            
        \end{equation*} 
       where we used  the upper bound on $G_i$ from \eqref{Geps-bound-NSE}.
       We may express $H_{i}$ as 
       \begin{equation} \begin{aligned}
        H_{i} &= \frac{\gamma}{(d_{i} + 2\epsilon)(d_{i+1} + 2\epsilon)} \left[ \left( \frac{d_{i}+2\epsilon}{2\epsilon}  \right)(d_{i+1}^{\star} - d_{i}^{\star}) -  \left(\frac{d_{i}^{\star}+2\epsilon}{2\epsilon}\right) (d_{i+1} - d_{i}) \right] \\[1ex] &= \frac{\gamma}{(d_{i} + 2\epsilon)(d_{i+1} + 2\epsilon)} \left[ \frac{1}{\rho_{i}}(d_{i+1}^{\star} - d_{i}^{\star}) -  \frac{1}{\rho_{i}^{\star}} (d_{i+1} - d_{i}) \right].
        \end{aligned}
    \end{equation}  
    Thus, using the bounds on $d_i$ \eqref{di(t)-NSE} and $d_i^\star$ \eqref{di(t)-NSE},
    we have 
 \begin{equation} \label{dxG-nse-1}
|\partial_{x}G^{\epsilon}\mathbf{1}_{(q_{i}(t)-\epsilon, q_{i}(t)+\epsilon)}| \le \frac{\gamma (C_0+2)^\gamma}{2^{\gamma}(d_{i}(t) + 2\epsilon)(d_{i+1}(t) + 2\epsilon)} \left[ |d_{i+1}^{\star} - d_{i}^{\star}| +   |d_{i+1}(t) - d_{i}(t)| \right].
    \end{equation} 
 The same bound can be obtained in the case where $(d_{i+1}^{\star}+2\epsilon) (d_{i}+2\epsilon) \le (d_{i}^{\star} + 2\epsilon)(d_{i+1}+2\epsilon)$. In that case, we can apply the inequality $|a^\gamma - b^\gamma| \le \gamma |a-b|b^{\gamma-1}$ to \eqref{dxG-1} and proceed as above.
Using the positivity of the $d_i$'s and estimate \eqref{dist-diff-t=0} on the increments $d_{i+1}^{\star} - d_{i}^{\star}$, we get
 \begin{equation} \label{dxG-nse-2}
|\partial_{x}G^{\epsilon}\mathbf{1}_{(q_{i}(t)-\epsilon, q_{i}(t)+\epsilon)}|
\le \frac{\gamma (C_0+2)^\gamma}{2^{\gamma+2}\epsilon^2} \left[  C_0 \epsilon^2 +   |d_{i+1}(t) - d_{i}(t)| \right].
    \end{equation}
Thus, it remains to obtain a suitable bound on $|d_{i+1}(t) - d_i(t)|$. 
We now aim to prove that the bound \eqref{di-diff-t=0}  propagates as \eqref{di-diff-t} for positive times.
To do so, we return to the balance of forces \eqref{balance-intro} for particle $i$. Integrating in time and taking the modulus gives us 
\begin{equation} \label{ln-eq1-ss}
  \left|  \ln \left( \frac{d_{i+1}(t)}{d_{i}(t)} \right) \right| = \left| \ln \left( \frac{d_{i+1}(0)}{d_{i}(0)} \right) - \int_{0}^{t} (G_{i+1} - G_{i})~d\tau + \int_{0}^{t}\int_{(q_{i}-\epsilon, q_{i}+\epsilon)} f\,dx d\tau - 2\epsilon(u_i(t) - u_i(0)) \right|
\end{equation}
Fix $t \in (0,T)$. Using $1 - 1/x \le \ln(x) \le x - 1$ for $x > 0$, we have
\begin{equation} \label{ln-bounds-t=0}
    \left| \ln \left( \frac{d_{i+1}(0)}{d_{i}(0)} \right) \right| \le
    \left\{
    \begin{aligned}
        &\frac{|d_{i+1}(0) - d_{i}(0)|}{d_{i}(0)}, \text{ if } d_{i+1}(0) \ge d_{i}(0), \\[1ex] 
        &\frac{|d_{i+1}(0) - d_{i}(0)|}{d_{i+1}(0)}, \text{ if } d_{i+1}(0) < d_{i}(0).
    \end{aligned}
    \right.
\end{equation} 
Now using the lower bound on the distances at initial time \eqref{nse-dist-assumption}, we obtain  
\begin{equation} \label{ln-eq2-ss}
     \left| \ln \left( \frac{d_{i+1}(0)}{d_{i}(0)} \right) \right| \le \frac{1}{c_0\epsilon} |d_{i+1}(0) - d_{i}(0)|,
\end{equation}
Similarly,
\begin{equation} \label{ln-bounds-t>0}
    \left| \ln \left( \frac{d_{i+1}(t)}{d_{i}(t)} \right) \right| \ge
    \left\{
    \begin{aligned}
        &\frac{|d_{i+1}(t) - d_{i}(t)|}{d_{i+1}(t)}, \text{ if } d_{i+1}(t) \ge d_{i}(t), \\[1ex] 
        &\frac{|d_{i+1}(t) - d_{i}(t)|}{d_{i}(t)}, \text{ if } d_{i+1}(t) < d_{i}(t),
    \end{aligned}
    \right.
\end{equation}
and using the upper bound \eqref{di(t)-NSE} we can also show that
\begin{equation} \label{ln-eq3-ss}
    \frac{1}{C_2 \epsilon} |d_{i+1}(t) - d_{i}(t)| \le \left| \ln \left( \frac{d_{i+1}(t)}{d_{i}(t)} \right) \right|.
\end{equation}
Note that we can write $G_{i+1}(t) - G_i(t) = 2\epsilon \partial_x G^\epsilon \mathbf{1}_{P_i(t)}$, which follows from the definition of $G^\epsilon$ \eqref{G-defn-nse}.
Substituting this along with \eqref{ln-eq2-ss} and \eqref{ln-eq3-ss} into \eqref{ln-eq1-ss}, results in
\begin{equation} \label{di-diff-nse-1}
     \frac{1}{C_2 \epsilon}|d_{i+1}(t) - d_{i}(t)| \le   \frac{1}{c_0\epsilon} |d_{i+1}(0) - d_{i}(0)| +  2 \epsilon \left(  \int_{0}^{t} |\partial_{x}G^{\epsilon}| \mathbf{1}_{P_i(\tau)} d\tau + \|f\|_{L^1_tL^{\infty}_{x}} + |u_i(t) - u_i(0)| \right).
\end{equation}  Combining \eqref{dxG-nse-2} and \eqref{di-diff-nse-1} gives us
\begin{equation} \label{di-diff-2}
\begin{aligned}
      |d_{i+1}(t) - d_{i}(t)| 
      & \le \frac{C_2}{c_0}|d_{i+1}(0) - d_{i}(0)| + \frac{C_2\gamma (C_0+2)^\gamma}{2^{\gamma+1}} \left[  C_0 \epsilon^2t +   \int_0^t |d_{i+1}(s) - d_{i}(s)| \ ds \right] \\
      & \quad + 2 C_2\epsilon^{2}   \left(  \|f\|_{L^1_tL^{\infty}_{x}} + |u_{i} (t)-u_{i} (0)|\right).
\end{aligned}
\end{equation} 
Integrating in time again from $\tau = 0$ to $\tau = t$, recalling \eqref{di-diff-t=0}, we get 

\begin{equation} \label{geps-1}
\begin{aligned}
    \int_0^t |d_{i+1}(\tau) - d_{i}(\tau)|~d\tau 
    & \le C \big( 1 +  \|f\|_{L^1_tL^{\infty}_{x}} \big) \ep^2 \, t + C \ep^2 t^2 + C\epsilon^2 \int_0^t|u_i(\tau) -u_i(0)|~d\tau \\
    & \qquad + C  \int_0^t  \left(\int_0^\tau |d_{i+1}(s) - d_{i}(s)|~ds \right) ~d\tau
\end{aligned}
\end{equation}

 We can use the bound on the velocities $u_i(t)$ from \eqref{ui-L1-NSE} with the boundedness of $u_i(0)$ from \eqref{nse-u0-condition} to estimate \begin{equation*}
     \begin{aligned}
         \int_0^t|u_i(\tau) -u_i(0)|~d\tau& \le  \|u_i\|_{L^1(0,t)}+ t |u_i(0)|  \le C(t).
     \end{aligned}
 \end{equation*}
 Inserting this bound into \eqref{geps-1} and applying Gronwall's inequality gives us the desired bound~\eqref{di-diff-t}
\begin{equation} \label{geps-bound-NSE}
    \begin{aligned}
          \int_0^t |d_{i+1}(\tau) - d_{i}(\tau)|~d\tau \leq C(t) \ep^2.
    \end{aligned}
\end{equation}
Returning to \eqref{dxG-nse-2} and integrating in time, we get for any $t \in [0,T]$,
\begin{equation}
\int_0^t |\partial_{x}G^{\epsilon}\mathbf{1}_{(q_{i}(\tau)-\epsilon, q_{i}(\tau)+\epsilon)}| ~d\tau 
\leq \frac{\gamma (C_0+2)^\gamma}{2^{\gamma+2}\epsilon^2} \left[  C \epsilon^2 t +   \int_0^t|d_{i+1}(\tau) - d_{i}(\tau)| \ d\tau \right]
\leq C(t),
\end{equation} 
where $C(t) > 0$ is independent of $i$ and $\epsilon$ and remains finite on $[0,T]$. This completes the proof.
\end{proof}

\begin{lemma}[Uniform boundedness of $u^\epsilon$]
    We have \begin{equation}
   \min_i u_i(0) -  \|\partial_x G^\epsilon\|_{L^1_t L^\infty_x} - \|f\|_{L^1_t L^\infty_x} \le u_i(t) \le \max_i u_i(0) + \|\partial_x G^\epsilon\|_{L^1_t L^\infty_x} + \|f\|_{L^1_t L^\infty_x},
    \end{equation} and therefore \begin{equation} \label{u-Linf-NSE}
        \|u^\epsilon\|_{L^\infty_{t,x}} \le C(T).
    \end{equation}
\end{lemma}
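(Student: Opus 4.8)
The statement is a discrete parabolic maximum principle for the ODE \eqref{balance-intro}. The plan is to split the right-hand side into the tridiagonal ``discrete Laplacian'' part, which obeys a comparison principle, plus the repulsion difference and the external force, which I will treat as a uniformly bounded zeroth-order forcing, and then run a penalized maximum-principle argument on $\max_i u_i(t)$, integrating the forcing in time.

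\textbf{Step 1: rewriting the equation.} First I would use the definition \eqref{G-defn-nse} of $G^\epsilon$, which gives $G_i(t)-G_{i+1}(t)=-2\epsilon\,\partial_xG^\epsilon(t,\cdot)\mathbf 1_{P_i(t)}$, hence $|G_i(t)-G_{i+1}(t)|\le 2\epsilon\|\partial_xG^\epsilon(t,\cdot)\|_{L^\infty_x}$; together with $|\bar f_i(t)|\le\|f(t,\cdot)\|_{L^\infty_x}$, and setting $\kappa(t):=\|\partial_xG^\epsilon(t,\cdot)\|_{L^\infty_x}+\|f(t,\cdot)\|_{L^\infty_x}$ (a continuous function of $t$), equation \eqref{balance-intro} reads
\begin{equation*}
2\epsilon\,\dot u_i(t)=\mu\Big(\frac{u_{i+1}(t)-u_i(t)}{d_{i+1}(t)}-\frac{u_i(t)-u_{i-1}(t)}{d_i(t)}\Big)+r_i(t),\qquad |r_i(t)|\le 2\epsilon\kappa(t),
\end{equation*}
for $i=1,\dots,N-1$, with the convention $u_0(t)=u_N(t)=0$, and with all distances positive by Proposition~\ref{prop:global-nse}.

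\textbf{Step 2: penalized maximum principle.} Fix $\eta>0$ and set $\Phi_\eta(t):=\max_{0\le j\le N}u_j(0)+\eta(1+t)+\int_0^t\kappa(s)\,ds$ and $v_i:=u_i-\Phi_\eta$. Then $v_i(0)\le-\eta<0$ for all $i$, and the pinned boundary values satisfy $v_0(t)=v_N(t)=-\Phi_\eta(t)\le-\eta<0$. I claim $v_i\le 0$ on $[0,T]$. Otherwise, since $t\mapsto\max_i v_i(t)$ is continuous and negative near $t=0$, there is a first time $t_1\in(0,T)$ with $\max_i v_i(t_1)=0$, and the maximum is attained at some interior index $i^\star\in\{1,\dots,N-1\}$. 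Since $v_{i^\star}$ is maximal, $u_{i^\star}(t_1)\ge u_{i^\star\pm1}(t_1)$, so the discrete-Laplacian term in Step 1 is $\le0$ at $t_1$, whence $2\epsilon\,\dot u_{i^\star}(t_1)\le 2\epsilon\kappa(t_1)$ and therefore $\dot v_{i^\star}(t_1)=\dot u_{i^\star}(t_1)-\kappa(t_1)-\eta\le-\eta<0$; this contradicts $\dot v_{i^\star}(t_1)\ge0$, which holds because $v_{i^\star}(t)\le0=v_{i^\star}(t_1)$ for $t<t_1$. Hence $u_i(t)\le\max_ju_j(0)+\eta(1+t)+\int_0^t\kappa$ for every $\eta>0$; letting $\eta\downarrow0$ and applying the same argument to $(-u_i)_i$ yields
\begin{equation*}
\min_i u_i(0)-\int_0^t\kappa(s)\,ds\ \le\ u_i(t)\ \le\ \max_i u_i(0)+\int_0^t\kappa(s)\,ds .
\end{equation*}
Since $\int_0^t\kappa\le\|\partial_xG^\epsilon\|_{L^1_tL^\infty_x}+\|f\|_{L^1_tL^\infty_x}$, this is exactly the asserted inequality.

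\textbf{Step 3 and the main obstacle.} Because $u^\epsilon(t,\cdot)$ is the piecewise-linear interpolant of the nodal values $(u_i(t))_i$, one has $\|u^\epsilon\|_{L^\infty_{t,x}}=\sup_{t,i}|u_i(t)|$, which by Step 2 is bounded by $\max_i|u_i(0)|+\|\partial_xG^\epsilon\|_{L^1_tL^\infty_x}+\|f\|_{L^1_tL^\infty_x}\le\|u_0\|_{L^\infty}+C(T)+T\|f\|_{L^\infty}$, using \eqref{nse-u0-condition}, Lemma~\ref{lem:NSE-dxG} and $f\in W^{1,\infty}$; this gives \eqref{u-Linf-NSE}. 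The only genuinely delicate point is Step 2: $t\mapsto\max_iu_i(t)$ is merely Lipschitz and the maximizing index may jump, so it cannot be differentiated directly. The $\eta(1+t)$ penalization circumvents this by forcing the first crossing time to be interior in time and the maximizing index to be interior in space (so that the fixed endpoint velocities $u_0=u_N=0$ never realize the maximum), after which one differentiates only the smooth function $v_{i^\star}$ at the single instant $t_1$.
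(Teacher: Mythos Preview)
Your proof is correct and rests on the same maximum-principle mechanism as the paper: at an interior index where $u_i$ is maximal, the tridiagonal ``discrete Laplacian'' term in \eqref{balance-intro} is nonpositive, so $\dot u_i$ is controlled by the size of the forcing $G_i-G_{i+1}$ and $2\epsilon\bar f_i$, after which one integrates in time.

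The technical handling of the moving maximizer is different, however. The paper works directly with $y^\epsilon(t)=\max_i u_i(t)$, decomposes $[0,T]$ into consecutive subintervals on each of which a fixed index $i_k$ realizes the maximum, integrates the ODE for $u_{i_k}$ on $[t_{k-1},t_k]$, and chains the resulting inequalities using the continuity of $y^\epsilon$ at the switching times. Your penalization argument instead shifts by $\Phi_\eta$ and derives a contradiction at the \emph{single} first-touching time $t_1$, differentiating only the smooth function $v_{i^\star}$ there, and then lets $\eta\downarrow0$. Your route sidesteps the (tacit) assumption in the paper's proof that $[0,T]$ can be partitioned into \emph{finitely} many subintervals with a constant maximizing index, which is not entirely obvious a priori; conversely, the paper's argument is slightly more constructive in that it produces the bound without a limiting procedure. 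Both arrive at exactly the same inequality.
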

\begin{proof}
    Our proof follows a maximum principle strategy. Firstly, the Cauchy-Lipschitz theorem ensures that the solution to the microscopic problem \eqref{ODE_NS} is regular for $\epsilon$ fixed, i.e. that $u_i \in C^1([0, T])$ for any $T>0$. Therefore, the function \begin{equation*}
        y^\epsilon(t) := \max_{i\in\{1,\ldots,N-1\}} u_i(t)
    \end{equation*} is continuous. As a result, we can decompose the full time interval $[0,T]$ into $K$ subintervals, where on each subinterval, there exists a particle that possesses the maximum velocity for any time in that subinterval. More precisely, we have the decomposition
    \begin{equation*}
        [0,T] = [0, t_{{1}}] \cup[t_{1}, t_{2}] \cup \dots \cup [t_{{k-1}}, t_{{k}}] \cup \dots \cup [t_{{K-1}}, T],
    \end{equation*} 
    such that for any $t$ in the subinterval $[t_{{k-1}}, t_{k}]$, we have $y^\epsilon(t) = u_{i_k}(t)$ for some $i_k \in \llbracket 1, N \rrbracket $. Note that the continuity of $y^\epsilon$ means that we can assume without loss of generality that each interval has non-zero measure. Indeed, every interval of zero measure can  be absorbed into a neighbouring interval.
    Now, fix a time $t \in [0,T]$. Assuming $t \in [t_{{k-1}}, t_{{k}}]$ for some $k\in \llbracket 1,K \rrbracket$, the balance of forces for particle $i_{k}$ reads as 
\begin{align*}
2\ep \dot{u}_{i_k}(t) 
= \mu \left(\dfrac{u_{i_{k}+1}(t) - u_{i_k}(t)}{d_{i_{k}+1}(t)} 
- \dfrac{u_{i_k}(t) - u_{i_{k}-1}(t)}{d_{i_{k}}(t)}\right) 
- \big(G_{i_{k}+1}(t) - G_{i_k}(t)\big) + 2\epsilon \langle f(t) \rangle_i.
\end{align*}
Since $y^\ep(t)=u_{i_{k}}(t)$, we have that both $u_{i_{k}}(t) - u_{i_{k}-1}(t)$ and $u_{i_{k}}(t)-u_{i_{k}+1}(t)$ are negative. Therefore, upon integrating in time between $t_{k-1}$ and $t$,
\begin{align*} 
    2\epsilon( u_{i_k}(t) - u_{i_k}(t_{k-1})) \le 2\epsilon \int_{t_{k-1}}^{t} \|\partial_x G^\epsilon(s,\cdot)\|_{L^\infty_{x}}~ds + 2\epsilon \int_{t_{k-1}}^{t} \|f(s,\cdot)\|_{L^\infty_x}~ds,
\end{align*} i.e. that
\begin{equation}\label{maxprinciple-1}
     u_{i_k}(t) \le  u_{i_k}(t_{k-1}) + \int_{t_{k-1}}^{t} \|\partial_x G^\epsilon(s,\cdot)\|_{L^\infty_{x}}~ds +  \int_{t_{k-1}}^{t} \|f(s,\cdot)\|_{L^\infty_x}~ds.
\end{equation}
Using the fact that $u_{i_{k}}(t_{k-1}) = u_{i_{k-1}}(t_{k-1})$ (due to the continuity of $y$), we can perform a similar estimate for particle $i_{k-1}$ and obtain:
\begin{align*}
    u_{i_k}(t_{k-1}) ={u_{i_{k-1}}(t_{k-1})} \leq u_{i_{k-1}}(t_{k-2}) +  \int_{t_{k-2}}^{t_{k-1}} \|\partial_x G^\epsilon(s,\cdot)\|_{L^\infty_{x}}~ds +  \int_{t_{k-2}}^{t_{k-1}} \|f(s,\cdot)\|_{L^\infty_x}~ds.
\end{align*}
Substituting this expression into \eqref{maxprinciple-1} gives us 
\begin{align*}
      u_{i_k}(t) \le  u_{i_{k-1}}(t_{k-2}) + \int_{t_{k-2}}^{t} \|\partial_x G^\epsilon(s,\cdot)\|_{L^\infty_{x}}~ds +  \int_{t_{k-2}}^{t} \|f(s,\cdot)\|_{L^\infty_x}~ds.
\end{align*}
Iterating this argument over all subintervals, we arrive at the estimate
\begin{equation*}
     u_{i_k}(t) \le  \max_i u_i(0) + \int_{0}^{t} \|\partial_x G^\epsilon(s,\cdot)\|_{L^\infty_{x}}~ds +  \int_{0}^{t} \|f(s,\cdot)\|_{L^\infty_x}~ds.
\end{equation*} Using the $L^1_t L^\infty_x$ bound on $\partial_xG^\ep$ from Lemma \ref{lem:NSE-dxG} and the regularity of $f$ allows us to conclude the upper bound. Repeating this argument for minimum instead of maximum  concludes the proof.
\end{proof}
We are now in a position to improve  the estimates from Lemma \ref{lem:NSE-dxG}.
\begin{corollary}
    We have the  uniform bound \begin{equation} \label{di-diff-uniform}
        |d_{i+1}(t) - d_i(t)| \le C(T)\epsilon^2,
    \end{equation} for any $i = 1,...,N-1$, and therefore 
    \begin{equation}\label{improved_Gx}
        \|\partial_xG^\epsilon\|_{L^\infty_{t,x}} \le C(T).
    \end{equation}
\end{corollary}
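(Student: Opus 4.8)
The plan is to re-run the argument of Lemma~\ref{lem:NSE-dxG}, but now feeding in the pointwise-in-time bound \eqref{u-Linf-NSE} on the velocities rather than the weaker $L^1_t$ estimate \eqref{ui-L1-NSE} used there. The point is that in Lemma~\ref{lem:NSE-dxG} the term $|u_i(t)-u_i(0)|$ appearing in \eqref{di-diff-2} could only be controlled after an additional integration in time, which is precisely why one obtained a bound on $\int_0^t|d_{i+1}-d_i|\,d\tau$ and not a pointwise one; with \eqref{u-Linf-NSE} this extra integration becomes unnecessary, and a direct Gronwall estimate closes at order $\epsilon^2$ on $|d_{i+1}-d_i|$ itself.

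Concretely, I would start from inequality \eqref{di-diff-2}, valid for every $i\in\{1,\dots,N-1\}$ and $t\in[0,T]$. I bound $|d_{i+1}(0)-d_i(0)|\le C_0\epsilon^2$ by \eqref{di-diff-t=0}, use $f\in W^{1,\infty}$ to bound $\|f\|_{L^1_tL^\infty_x}\le C(T)$, and use \eqref{u-Linf-NSE} (at times $t$ and $0$) to bound $|u_i(t)-u_i(0)|\le 2\|u^\epsilon\|_{L^\infty_{t,x}}\le C(T)$. All the inhomogeneous terms in \eqref{di-diff-2} then collapse to a single $C(T)\epsilon^2$, so that, writing $g_i(t):=|d_{i+1}(t)-d_i(t)|$,
\[
g_i(t)\;\le\; C_1(T)\,\epsilon^2 \;+\; C_2(T)\int_0^t g_i(s)\,ds,\qquad t\in[0,T],
\]
with $C_1(T),C_2(T)>0$ independent of $i$ and $\epsilon$. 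Gronwall's inequality yields $g_i(t)\le C_1(T)\epsilon^2\,e^{C_2(T)T}\le C(T)\epsilon^2$ on $[0,T]$, which is \eqref{di-diff-uniform}.

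Finally, I would substitute \eqref{di-diff-uniform} together with \eqref{dist-diff-t=0} (recall $\mathbf{d}^{\star,\epsilon}$ is constant in time, so $|d^\star_{i+1}-d^\star_i|\le C_0\epsilon^2$ for all $t$) into the cell-wise slope estimate \eqref{dxG-nse-2}, obtaining $|\partial_xG^\epsilon\mathbf{1}_{(q_i(t)-\epsilon,q_i(t)+\epsilon)}|\le \frac{\gamma(C_0+2)^\gamma}{2^{\gamma+2}}\bigl(C_0+C(T)\bigr)\le C(T)$, uniformly in $i,t,\epsilon$. Since $G^\epsilon$ is piecewise linear and constant on the plateaus between consecutive particles, $\partial_xG^\epsilon$ has zero derivative there, so this pointwise estimate on the particle cells upgrades to $\|\partial_xG^\epsilon\|_{L^\infty_{t,x}}\le C(T)$, i.e. \eqref{improved_Gx}. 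There is no substantial obstacle: the only step requiring care is threading the improved velocity bound \eqref{u-Linf-NSE} through \eqref{di-diff-2} so that the Gronwall argument produces an $O(\epsilon^2)$ bound directly on $|d_{i+1}-d_i|$; everything else is bookkeeping.
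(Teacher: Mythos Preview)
Your proposal is correct and follows essentially the same approach as the paper: return to inequality \eqref{di-diff-2}, replace the $L^1_t$ control on $u_i$ by the pointwise bound \eqref{u-Linf-NSE}, apply Gronwall directly to $|d_{i+1}(t)-d_i(t)|$, and then feed \eqref{di-diff-uniform} back into \eqref{dxG-nse-2}. The paper's proof is just a terse one-line version of exactly this argument.
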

\begin{proof} 
    Simply return to \eqref{di-diff-2} and use the boundedness of the velocity $|u_i(t)|$ to obtain \eqref{di-diff-uniform}. Using \eqref{di-diff-uniform} in \eqref{dxG-nse-2} leads to the bound on $\partial_xG^\ep$.
\end{proof}
\bigskip

Our next goal is to obtain estimates for $\rho^\epsilon$ and $\rho^{\star,\epsilon}$. 

\begin{proposition} \label{prop:rho-BV-NSE} We have 
\begin{equation} \label{rho-LinfBV}
    \|\rho^{\epsilon}\|_{L^\infty(0,T; BV(I))}  + \|\rho^{\star, \epsilon}\|_{L^\infty(0,T; BV(I))} \le C(T),
\end{equation}
and
\begin{equation} \label{rho-BV}
    \|\rho^{\epsilon}\|_{BV((0,T) \times I)} + \|\rho^{\star, \epsilon}\|_{BV((0,T) \times I)} \le C(T).
\end{equation}
For the time derivatives, we have 
\begin{equation}
    \|\partial_t \rho^\ep \|_{L^2(0,T; H^{-1}(I))} + \|\partial_t \rho^{\star, \ep} \|_{L^2(0,T; H^{-1}(I))} \le C(T). \label{rho-time-derivs}
\end{equation}
\end{proposition}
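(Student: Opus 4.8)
The plan is to combine the explicit piecewise-constant structure of $\rho^\epsilon(t,\cdot)$ and $\rho^{\star,\epsilon}(t,\cdot)$ with the PDE representation of Proposition~\ref{prop:NSE-eqn-eps-copy}, using the increment estimate \eqref{di-diff-uniform} as the key quantitative input. For the $L^\infty_t BV_x$ part, I would note that at each time $t$ both $\rho^\epsilon(t,\cdot)$ and $\rho^{\star,\epsilon}(t,\cdot)$ are piecewise constant on $I$ with jumps only at $q_1(t),\dots,q_{N-1}(t)$, so that their essential spatial variation equals $\sum_{i=1}^{N-1}|\rho_{i+1}(t)-\rho_i(t)|$, respectively $\sum_{i=1}^{N-1}|\rho_{i+1}^{\star}-\rho_i^{\star}|$. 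Since $\rho_i=2\epsilon/(d_i+2\epsilon)$ and $d_i\ge 0$,
\[
|\rho_{i+1}(t)-\rho_i(t)| = \frac{2\epsilon\,|d_{i+1}(t)-d_i(t)|}{(d_i(t)+2\epsilon)(d_{i+1}(t)+2\epsilon)} \le \frac{|d_{i+1}(t)-d_i(t)|}{2\epsilon},
\]
and the same inequality with $d_i$ replaced by $d_{0,i}^{\star}$ holds for $\rho^{\star,\epsilon}$. By \eqref{di-diff-uniform} (resp.\ \eqref{dist-diff-t=0}) each such jump is $\le C(T)\epsilon$ uniformly in $t$ and $i$, so summing the $N-1\sim\epsilon^{-1}$ jumps gives a uniform-in-$t$ bound on the spatial total variation; together with the $L^\infty$ bounds \eqref{rho-bound-nse}--\eqref{rhost-bound-nse} and $|I|=1$ this yields the first line of \eqref{rho-LinfBV}.

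For the space--time $BV$ bounds I would split the full variation into its space and time parts. The space part is the time integral of the previous step, hence $\le T\,C(T)$. For the time part I would use the conservation law \eqref{NSE-eps-cons}, i.e.\ $\partial_t\rho^\epsilon=-\partial_x(\rho^\epsilon u^\epsilon)$ in $\mathcal D'((0,T)\times I)$: since $u^\epsilon$ is continuous in $x$, the jump set of $\rho^\epsilon u^\epsilon(t,\cdot)$ is contained in that of $\rho^\epsilon(t,\cdot)$, so
\[
\TV_x\big((\rho^\epsilon u^\epsilon)(t,\cdot)\big) \le \|u^\epsilon\|_{L^\infty_{t,x}}\,\TV_x\big(\rho^\epsilon(t,\cdot)\big) + \|\partial_x u^\epsilon(t,\cdot)\|_{L^1(I)}.
\]
Integrating in $t$, using \eqref{u-Linf-NSE}, the first step, and Cauchy--Schwarz with \eqref{ueps-H1-NSE} to pass from $\|\partial_x u^\epsilon\|_{L^1_{t,x}}$ to $\|\partial_x u^\epsilon\|_{L^2_{t,x}}$, a Fubini argument in $t$ shows $\partial_x(\rho^\epsilon u^\epsilon)$ is a finite measure on $(0,T)\times I$ of mass $\le C(T)$, i.e.\ $|\partial_t\rho^\epsilon|((0,T)\times I)\le C(T)$; adding the two parts gives \eqref{rho-BV} for $\rho^\epsilon$. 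For $\rho^{\star,\epsilon}$ the same scheme applies starting from the transport equation \eqref{NSE-eps-transport}, now with $|\partial_t\rho^{\star,\epsilon}|=|u^\epsilon\partial_x\rho^{\star,\epsilon}|\le\|u^\epsilon\|_{L^\infty_{t,x}}|\partial_x\rho^{\star,\epsilon}|$ as measures, so the time part is controlled by $C(T)\int_0^T\TV_x(\rho^{\star,\epsilon}(t,\cdot))\,dt$.

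The $L^2_tH^{-1}_x$ bounds on the time derivatives then follow directly from the same representation and the elementary inequality $\|\partial_x g\|_{H^{-1}(I)}\le\|g\|_{L^2(I)}$. For $\rho^\epsilon$ one gets $\|\partial_t\rho^\epsilon(t,\cdot)\|_{H^{-1}(I)}\le\|\rho^\epsilon u^\epsilon(t,\cdot)\|_{L^2(I)}\le\|u^\epsilon(t,\cdot)\|_{L^2(I)}$ using $0<\rho^\epsilon<1$, and squaring and integrating in $t$ yields the bound via \eqref{ueps-H1-NSE}. For $\rho^{\star,\epsilon}$ I would rewrite $u^\epsilon\partial_x\rho^{\star,\epsilon}=\partial_x(u^\epsilon\rho^{\star,\epsilon})-\rho^{\star,\epsilon}\partial_x u^\epsilon$ (legitimate since $u^\epsilon$ is continuous) and bound $\|\partial_t\rho^{\star,\epsilon}(t,\cdot)\|_{H^{-1}(I)}\le\|u^\epsilon(t,\cdot)\|_{L^2(I)}+\|\partial_x u^\epsilon(t,\cdot)\|_{L^2(I)}$, which integrates to \eqref{rho-time-derivs}.

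As for the main obstacle: there is no genuinely hard step once the uniform increment bound \eqref{di-diff-uniform} is in hand — that estimate (already proved) is precisely what turns ``$N\sim\epsilon^{-1}$ spatial jumps of size $O(\epsilon)$'' into an $O(1)$ total variation, and everything else is a consequence of the $L^\infty$ and $L^2_tH^1_x$ controls on $u^\epsilon$. The only points requiring care are the measure-theoretic bookkeeping in the space--time step: checking that $\partial_x(\rho^\epsilon u^\epsilon)$ and $u^\epsilon\partial_x\rho^{\star,\epsilon}$ are honest finite measures on the cylinder with the asserted total mass (a Fubini argument in $t$), and that the continuity of $u^\epsilon$ prevents the creation of new jumps when multiplying the BV densities — but these are routine.
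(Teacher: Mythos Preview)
Your proposal is correct and follows essentially the same route as the paper: compute the spatial total variation of the piecewise-constant densities via the jump formula and the increment bound \eqref{di-diff-uniform}, then control the time-variation through the continuity/transport equations by splitting $\partial_x(\rho^\epsilon u^\epsilon)$ into $u^\epsilon\partial_x\rho^\epsilon+\rho^\epsilon\partial_x u^\epsilon$ (the paper writes this decomposition out explicitly with Diracs, you invoke the BV product rule --- same computation). The one place where your argument is slightly cleaner than the paper's is the $L^2_tH^{-1}_x$ bound: the paper appeals to the embedding $\mathcal{M}(I)\hookrightarrow H^{-1}(I)$ after establishing the measure bound, whereas your direct use of $\|\partial_x g\|_{H^{-1}}\le\|g\|_{L^2}$ with $g=\rho^\epsilon u^\epsilon$ (and the Leibniz rewriting for $\rho^{\star,\epsilon}$) bypasses the BV step entirely for this conclusion and reads more simply.
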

\begin{proof} 
{We first recall that, for $\Omega$ open subset of $\mathbb{R}^n$ and $g\in L^1(\Omega)$, $\|g\|_{BV(\Omega)} := \|g\|_{L^1(\Omega)} + TV_{\Omega} ~g$, where 
\begin{equation*}
    TV_{\Omega} ~g = \sup_{\varphi \in \mathcal{F}_\Omega}  \int_\Omega g({\mathbf x})\, \mbox{div} \varphi({\mathbf x})~d{\mathbf x} \quad \mbox{with}\quad \mathcal{F}_\Omega = \{ \varphi \in \mathcal{D}(\Omega) : \|\varphi\|_{L^\infty(\Omega)} \le 1\}.
\end{equation*}
\\
Let us now prove \eqref{rho-LinfBV}. 
Since $\|\rho^\epsilon\|_{L^{\infty}_{t,x}} \leq 1$ and $\|\rho^{\star,\epsilon}\|_{L^{\infty}_{t,x}} \leq 1$ from \eqref{rho-bound-nse} and \eqref{rhost-bound-nse}, it is enough to show \begin{equation*}
    \TV_I~ \rho^\epsilon(t,\cdot) \le C(T) \text{ and }    \TV_I~ \rho^{\star,\epsilon}(t,\cdot) \le C(T) \quad \forall ~ t \in (0,T). 
\end{equation*}
To this end, using the fact that $\rho^\ep(t,\cdot)$ is piecewise constant, together with $ \rho_i(t) =  2\epsilon / (d_i(t) +2\epsilon)$ from \eqref{rho-NSE} and the bound $|d_{i+1}(t)-d_i(t)|\le C(T)\epsilon^2$ from \eqref{di-diff-uniform}, we have
\begin{equation}\label{BVthis}
    \begin{aligned}
        \TV_I~ \rho^\epsilon(t,\cdot) &= \sum_{i=1}^{N-1}|\rho_{i+1}(t) - \rho_i(t)| = \frac{\rho_i \rho_{i+1}}{2\epsilon} \sum_{i=1}^{N-1} |d_{i+1}(t)-d_i(t)| \\[1ex] &\le \frac{1}{2\epsilon}N\epsilon^2C(T) \le C(T).
    \end{aligned}
\end{equation}  
Similarly,
\begin{equation}
    \begin{aligned}
        \TV_I~ \rho^{\star,\epsilon}(t,\cdot) & = \frac{\rho_i^\star \rho_{i+1}^\star}{2\epsilon} \sum_{i=1}^{N-1} |d_{i+1}^\star-d_i^\star|\le \frac{1}{2\epsilon}N\epsilon^2C \le C,
    \end{aligned}
\end{equation} using the definition of $\rho_i^\star$ from \eqref{rhost-NSE} and $|d_{i+1}^\star-d_i^\star|\le C\epsilon^2$ from \eqref{dist-diff-t=0}. This proves \eqref{rho-LinfBV}. \\
\\
We now prove \eqref{rho-BV} for $\rho^\epsilon$. Again, since $\|\rho^\epsilon\|_{L^{\infty}_{t,x}} < 1$ from \eqref{rho-bound-nse}, it is enough to obtain
\begin{equation*}
    TV_{[0,T]\times I} ~\rho^\ep \le C(T),
\end{equation*} 
where
\begin{equation*}
    TV_{[0,T]\times I} ~\rho^\ep = \sup_{\varphi \in \mathcal{F}_{_{[0,T]\times I}}} \left(\int_0^T \!\!\!\int_I \rho^\epsilon(t,x) \partial_t \varphi(t,x)\,dx\,dt + \int_0^T \!\!\!\int_I \rho^\epsilon(t,x) \partial_x \varphi(t,x)\,dx\,dt\right).
\end{equation*}
\\
The bound for the spatial derivative comes from the integration in time of \eqref{BVthis}.
For the temporal derivative, let us first consider $\varphi\in\mathcal{F}_{[0,T]\times I}$ given. We can use the continuity equation $\partial_t \rho^\epsilon = - \partial_x (\rho^\epsilon u^\epsilon)$ to get
\begin{equation}\label{eq:dx-rho-u}
\begin{aligned}
    \int_0^T \!\!\!\int_I \rho^\epsilon(t,x) \partial_t \varphi(t,x)\,dx\,dt
    =&\int_0^T \langle \partial_x(\rho^\epsilon u^\epsilon)(t,\cdot), \varphi (t,\cdot)\rangle_{\mathcal{D}'(I) \times \mathcal{D}(I)} ~dt\\
     =&\int_0^T\langle u^\epsilon(t,\cdot)\partial_x\rho^\epsilon(t,\cdot), \varphi (t,\cdot)\rangle_{\mathcal{D}'(I) \times \mathcal{D}(I)}  ~dt\\
     &+\int_0^T \langle \rho^\epsilon(t,\cdot)\partial_x u^\epsilon(t,\cdot), \varphi (t,\cdot)\rangle_{\mathcal{D}'(I) \times \mathcal{D}(I)}~dt.
\end{aligned}
\end{equation}
We begin by estimating the first term in the right-hand side. From the definitions of $\rho^\epsilon$ \eqref{rho-NSE} and $u^\ep$ \eqref{u-NSE}, we have 
\begin{equation*} 
\begin{aligned}
    \partial_x \rho^\epsilon(t,\cdot) &= \sum_{i=1}^{N-1} (\rho_{i+1}(t) - \rho_{i}(t)) \delta_{\{x=q_{i}(t)\}}, \\[1ex] 
    u^\epsilon (t,\cdot) \partial_x\rho^\epsilon(t,\cdot) &= \sum_{i=1}^{N-1} (\rho_{i+1}(t) - \rho_{i}(t))  u_i(t) \delta_{\{x=q_{i}(t)\}},
\end{aligned}
\end{equation*}
so that
\begin{equation*}
\langle u^\epsilon(t,\cdot)\partial_x\rho^\epsilon(t,\cdot), \varphi (t,\cdot)\rangle_{\mathcal{D}'(I) \times \mathcal{D}(I)}  = \sum_{i=1}^{N-1} (\rho_{i+1}(t) - \rho_{i}(t))  u_i(t) \varphi(t,q_{i}(t)).
\end{equation*}
Integrating in time and  using again the definition of $\rho_i$ from \eqref{rho-NSE} and the bound on $d_{i+1} - d_i$ from \eqref{di-diff-uniform} we get
\begin{equation*}
\begin{aligned}
    \left|\int_0^T \langle u^\epsilon(t,\cdot)\partial_x\rho^\epsilon(t,\cdot), \varphi (t,\cdot)\rangle_{\mathcal{D}'(I) \times \mathcal{D}(I)} ~dt \right|
    &\leq \int_0^T\sum_{i=1}^{N-1} | (\rho_{i+1}(t) - \rho_{i}(t))  u_i(t) \varphi(t,q_{i}(t))| ~dt \\
    &\leq \int_0^T\frac{\rho_i(t) \rho_{i+1}(t)}{2\epsilon} \sum_{i=1}^{N-1} |d_{i+1}(t)-d_i(t)| |u_i(t) \varphi(t,q_i(t))|  ~dt\\
    &\leq \frac{1}{2\epsilon} \|u^\epsilon\|_{L^\infty_{t,x}} \|\varphi\|_{L^\infty_{t,x}} \sum_{i=1}^{N-1} \int_0^T|d_{i+1}(t)-d_i(t)|  ~dt\\
    &\leq C(T) \|u^\epsilon\|_{L^\infty_{t,x}} \|\varphi\|_{L^\infty_{t,x}}.
    \end{aligned}
\end{equation*}
Then, coming back to \eqref{eq:dx-rho-u} we obtain
\begin{equation*}
\begin{aligned}
     \int_0^T \!\!\!\int_I \rho^\epsilon(t,x) \partial_t \varphi(t,x)\,dx\,dt
    &\leq C(T) \|u^\epsilon\|_{L^\infty_{t,x}} \|\varphi\|_{L^\infty_{t,x}} +
    \left|\int_0^T \!\!\!\int_I |\rho^\epsilon(t,x)||\partial_x u^\epsilon(t,x)||\varphi (t,x)| \,dx~dt\right|
    \\
    &\leq  C(T) \|u^\epsilon\|_{L^\infty_{t,x}} \|\varphi\|_{L^\infty_{t,x}} + \|\rho^\epsilon\|_{L^\infty_{t,x}}\|\partial_x u^\epsilon\|_{L^2_{t,x}}  \|\varphi\|_{L^2_{t,x}}\\
    &\leq C(T),
\end{aligned}
\end{equation*}
where we used the bounds on $\rho^\ep$ \eqref{rho-bound-nse}and $u^\ep$ \eqref{ueps-H1-NSE}. This allows us to conclude that $\|\rho^\epsilon\|_{BV([0,T]\times I)} \le C(T).$
}
The same argument can be repeated with $\rho^{\star,\epsilon}$, using the transport equation $\partial_t \rho^{\star,\epsilon} = - u^\epsilon \partial_x \rho^{\star,\epsilon}$ instead of the continuity equation. Using the embedding $\mathcal{M}(I) \hookrightarrow H^{-1}(I)$, the bounds \eqref{rho-time-derivs} on the time derivatives follow. 
\end{proof}
\begin{corollary} We have 
\begin{equation} \label{dx-H-1}
       \|\partial_x\rho^\epsilon\|_{L^\infty_t H^{-1}_x} + \|\partial_x \rho^{\star,\epsilon}\|_{L^\infty_t H^{-1}_x} \le C(T).
\end{equation}
\end{corollary}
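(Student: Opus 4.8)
The plan is to deduce \eqref{dx-H-1} directly from the $L^\infty_t BV_x$ bounds \eqref{rho-LinfBV} already established in Proposition~\ref{prop:rho-BV-NSE}, using the one–dimensional embedding $\mathcal{M}(I)\hookrightarrow H^{-1}(I)$ of the space of finite Radon measures into $H^{-1}(I)$ (the same embedding invoked at the end of that proof). Recall that this embedding is dual to the Sobolev embedding $H^1_0(I)\hookrightarrow C(\overline I)$, which holds since $\dim I=1$; it provides a constant $c_I>0$ such that $\|\nu\|_{H^{-1}(I)}\le c_I\|\nu\|_{\mathcal{M}(I)}$ for every $\nu\in\mathcal{M}(I)$.

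First I would fix $t\in(0,T)$ and note that, since $\rho^\epsilon(t,\cdot)$ is piecewise constant and hence lies in $BV(I)$, its distributional derivative is the finite measure $\partial_x\rho^\epsilon(t,\cdot)=\sum_{i=1}^{N-1}(\rho_{i+1}(t)-\rho_i(t))\delta_{q_i(t)}$, whose total mass equals $\TV_I\rho^\epsilon(t,\cdot)\le\|\rho^\epsilon(t,\cdot)\|_{BV(I)}$. Applying the embedding gives $\|\partial_x\rho^\epsilon(t,\cdot)\|_{H^{-1}(I)}\le c_I\TV_I\rho^\epsilon(t,\cdot)$; taking the supremum over $t$ and invoking \eqref{rho-LinfBV} then yields
\[
\|\partial_x\rho^\epsilon\|_{L^\infty_tH^{-1}_x}\le c_I\,\|\rho^\epsilon\|_{L^\infty(0,T;BV(I))}\le C(T).
\]
The identical computation with $\rho^{\star,\epsilon}(t,\cdot)$ in place of $\rho^\epsilon(t,\cdot)$, using the $\rho^{\star,\epsilon}$ part of \eqref{rho-LinfBV}, controls $\|\partial_x\rho^{\star,\epsilon}\|_{L^\infty_tH^{-1}_x}$; adding the two estimates proves \eqref{dx-H-1}.

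I do not expect a genuine obstacle here, as the corollary is a soft consequence of the preceding proposition. The only point worth a sentence is the measurability of $t\mapsto\|\partial_x\rho^\epsilon(t,\cdot)\|_{H^{-1}(I)}$, which is clear: for fixed $\epsilon$ the microscopic solution is $C^1$, so $t\mapsto q_i(t)$ and $t\mapsto\rho_i(t)$ are $C^1$ and thus $t\mapsto\partial_x\rho^\epsilon(t,\cdot)$ is continuous from $(0,T)$ into $H^{-1}(I)$; the same applies to $\rho^{\star,\epsilon}$.
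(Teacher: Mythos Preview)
Your proposal is correct and follows essentially the same approach as the paper: both arguments observe that $\partial_x\rho^\epsilon(t,\cdot),\ \partial_x\rho^{\star,\epsilon}(t,\cdot)\in\mathcal{M}(I)$ by the $L^\infty_tBV_x$ bound \eqref{rho-LinfBV}, and then invoke the embedding $\mathcal{M}(I)\hookrightarrow H^{-1}(I)$. Your write-up is in fact slightly more detailed than the paper's, which simply states these two facts without elaboration.
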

\begin{proof}
    By definition of $BV(I)$, we have $\partial_x \rho(t,\cdot), \partial_x \rho^{\star,\epsilon}(t,\cdot) \in \mathcal{M}(I)$. The result then follows from the embedding $\mathcal{M}(I) \hookrightarrow H^{-1}(I)$.
\end{proof}

We now combine the energy estimate \eqref{energy-NSE} with the bounds on the distances \eqref{di(t)-NSE} to obtain a uniform estimate for $w^\epsilon$, which is defined through \eqref{def:w-ss} and \eqref{w-defn-nse}.
\begin{proposition} \label{prop:weps-bdd-NSE}
We have 
\begin{equation}
\|w^{\epsilon}\|_{L^{2}_{t,x}}\le C,
\end{equation}
for some $C>0$ independent of $\ep$.
\end{proposition}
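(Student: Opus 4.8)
The plan is to bound $\|w^\epsilon\|_{L^2_{t,x}}$ by comparing it, on each sub-interval of the spatial domain, with the piecewise-constant profile built from the slopes $w_i(t) = (u_i(t)-u_{i-1}(t))/d_i(t)$, and then to recognise that $\sum_i d_i(t)\, w_i(t)^2$ is precisely the dissipation term controlled by the energy estimate \eqref{energy-NSE}. First I would recall from \eqref{def:w-ss} that on each interval $(q_{i-1}(t), q_i(t))$ we have $w^\epsilon = w_i(t)$ when restricted away from the particle-overlap regions, and that on each particle $P_i(t) = [q_i(t)-\epsilon, q_i(t)+\epsilon]$ the function $w^\epsilon$ interpolates linearly between $w_i(t)$ and $w_{i+1}(t)$ (with the obvious one-sided modifications on $P_0$ and $P_N$). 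Since the linear interpolant between two values $a,b$ on an interval of length $\ell$ has $L^2$-norm squared bounded by $\ell(a^2+b^2)$ (indeed $\int_0^\ell (a + \tfrac{x}{\ell}(b-a))^2\,dx \le \tfrac{2\ell}{3}(a^2+b^2) \le \ell(a^2+b^2)$), and each particle has length $2\epsilon$, the contribution of all the overlap regions $P_i(t)$ to $\|w^\epsilon(t,\cdot)\|_{L^2(I)}^2$ is controlled by $C\epsilon \sum_{i} w_i(t)^2$.

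Next I would handle the ``bulk'' contribution, i.e. the part of $w^\epsilon$ on the intervals $(q_i(t)+\epsilon, q_{i+1}(t)-\epsilon)$ where $w^\epsilon = w_{i+1}(t)$ is constant. That interval has length $q_{i+1}(t)-q_i(t)-2\epsilon = d_{i+1}(t)$, so its contribution is exactly $d_{i+1}(t)\, w_{i+1}(t)^2 = d_{i+1}(t)\,\frac{|u_{i+1}(t)-u_i(t)|^2}{d_{i+1}(t)^2} = \frac{|u_{i+1}(t)-u_i(t)|^2}{d_{i+1}(t)}$. Summing over $i$ this is $\sum_i \frac{|u_i(t)-u_{i-1}(t)|^2}{d_i(t)}$, which after integration in time is bounded by $\frac{4}{\mu}\big(\mathcal{E}_0 + \frac{1}{\mu}\|f\|_{L^2_tL^1_x}^2 D_N(0)\big)$ thanks to the dissipation term in the energy estimate \eqref{energy-NSE} — this is the same quantity that appears in Corollary~\ref{u-bounds-NSE}. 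For the particle contribution I need $\epsilon \sum_i w_i(t)^2 = \epsilon \sum_i \frac{|u_i(t)-u_{i-1}(t)|^2}{d_i(t)^2}$; using the uniform lower bound $d_i(t) \ge c_1(t)\epsilon$ from \eqref{di(t)-NSE}, we get $\epsilon \sum_i \frac{|u_i-u_{i-1}|^2}{d_i^2} \le \frac{1}{c_1(t)}\sum_i \frac{|u_i-u_{i-1}|^2}{d_i}$, which is again integrable in time against the dissipation bound (note $c_1(t)$ stays positive on any finite interval, so $1/c_1(t)$ is bounded on $[0,T]$, possibly after replacing $c_1(t)$ by $\inf_{[0,T]}c_1 > 0$).

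Putting the two pieces together yields $\|w^\epsilon(t,\cdot)\|_{L^2(I)}^2 \le C(1 + 1/c_1(t))\sum_i \frac{|u_i(t)-u_{i-1}(t)|^2}{d_i(t)}$, and integrating in time and invoking \eqref{energy-NSE} gives the claimed uniform bound. The main obstacle — or rather the only point requiring care — is the particle-overlap term: the naive estimate there carries a factor $1/d_i^2 \sim 1/\epsilon^2$ against an interval of length $\epsilon$, which would blow up if one only used $\sum_i |u_i-u_{i-1}|^2/d_i \le C$ together with a crude bound. It is essential to exploit the \emph{pointwise} lower bound $d_i(t) \ge c_1(t)\epsilon$ rather than just the summed dissipation, in order to convert one factor of $1/d_i$ into $1/(c_1(t)\epsilon)$ and cancel it against the $\epsilon$ from the interval length. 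With that observation the estimate closes cleanly, uniformly in $\epsilon$ on each fixed time horizon $[0,T]$.
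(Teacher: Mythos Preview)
Your proposal is correct and follows essentially the same approach as the paper: both rely on the key observation that the pointwise lower bound $d_i(t)\ge c_1(t)\epsilon$ from \eqref{di(t)-NSE} converts $\epsilon\sum_i w_i(t)^2$ into the dissipation term $\sum_i |u_i-u_{i-1}|^2/d_i$ controlled by the energy estimate \eqref{energy-NSE}. Your write-up is in fact more explicit than the paper's, which simply bounds $\epsilon\sum_i\int_0^T|w_i|^2\,dt\le C(T)$ and then appeals in one line to the piecewise-affine structure of $w^\epsilon$; your separate treatment of the gap regions (where you directly recover the dissipation without invoking the upper bound on $d_i$) and the particle regions is a clean way to make that last step precise.
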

\begin{proof}
First note that from the lower bound on $d_i$~\eqref{di(t)-NSE}, we have for any $i=1,...,N$,
\begin{equation} 
    \epsilon( w_i (t))^2 
    = \epsilon \frac{|u_i (t) - u_{i-1} (t)|^2}{(d_i(t))^2} 
    \le  \frac{|u_{i} (t) - u_{i-1} (t)|^2}{d_i(t) c_1(T)}.
\end{equation} 
Therefore, summing up the above inequalities over $i=1,...,N$, integrating in time, and using \eqref{energy-NSE}, we get
\begin{equation} \label{eps-weps-NSE}
    \epsilon \sum_{i=1}^N\int_0^T  | w_i (t)|^2~dt \le C(T),
\end{equation}
which yields the control of the $L^2_{t,x}$ norm on the macroscopic $w^\ep$ (using the fact that  $w^\epsilon$ is affine by part -- see the definition \eqref{w-defn-nse}).
\end{proof}

Next, we estimate the alternative velocity $v^\epsilon$ (see definition \eqref{def-veps} and Figure \ref{fig:veps-NSE}).
The following result confirms that this is indeed a suitable approximation for the velocity. 
\begin{proposition}[Bounds on the velocity $v^\ep$]\label{prop:veps-bound}
We have
\begin{equation} \label{dxv-bound-NSE}
\|v^\epsilon\|_{L^\infty_{t,x}}+\|\partial_{x}v^{\epsilon}\|_{L^{2}_{t,x}} \le C(T),
\end{equation} 
as well as
\begin{equation}\label{v-bounds}
 \|v^{\epsilon} - u^{\epsilon}\|_{L^{2}_{t}L^{\infty}_{x}} 
 \le C(T) \sqrt{\epsilon},
\end{equation} 
and for the initial data
\begin{equation} \label{veps-ID-bound}
     \|v^{\epsilon}(0,\cdot) - u_0\|_{L^{\infty}(I)} 
 \le C \sqrt{\epsilon}.
\end{equation}
\end{proposition}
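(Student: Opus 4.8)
First I would establish the $L^\infty_{t,x}$ bound on $v^\epsilon$. From the definition \eqref{def-veps}, on each particle $P_i(t)$ we have $v^\epsilon(t,x) = u_i(t)$, so $\|v^\epsilon\|_{L^\infty_{t,x}}$ restricted to $\bigcup_i P_i(t)$ is bounded by $\max_i |u_i(t)|$, which is controlled by \eqref{u-Linf-NSE}. On each intermediate interval $(q_{i-1}(t)+\epsilon, q_i(t)-\epsilon)$ of length $d_i(t)$, the function $v^\epsilon$ is affine and interpolates between the values $0$ at $x=q_{i-1}(t)+\epsilon$ and $u_i(t)-u_{i-1}(t)$ at $x=q_i(t)-\epsilon$ (reading off the slope $w_i(t)$ times the length $d_i(t)$); hence on that interval $|v^\epsilon(t,x)| \le |u_i(t)-u_{i-1}(t)| \le 2\|u^\epsilon\|_{L^\infty_{t,x}}$. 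Combining these gives $\|v^\epsilon\|_{L^\infty_{t,x}}\le C(T)$.

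Next, for $\partial_x v^\epsilon$, note that $v^\epsilon$ is constant on each $P_i(t)$ so $\partial_x v^\epsilon = 0$ there, while on the intermediate interval $(q_{i-1}(t)+\epsilon, q_i(t)-\epsilon)$ we have $\partial_x v^\epsilon = w_i(t) = (u_i(t)-u_{i-1}(t))/d_i(t)$. Therefore
\begin{equation*}
\|\partial_x v^\epsilon(t,\cdot)\|_{L^2(I)}^2 = \sum_{i=1}^N d_i(t)\,|w_i(t)|^2 = \sum_{i=1}^N \frac{|u_i(t)-u_{i-1}(t)|^2}{d_i(t)},
\end{equation*}
and integrating in time and invoking the energy estimate \eqref{energy-NSE} gives $\|\partial_x v^\epsilon\|_{L^2_{t,x}}^2 \le \frac{4}{\mu}\bigl(\mathcal{E}_0 + \tfrac{1}{\mu}\|f\|_{L^2_tL^1_x}^2 D_N(0)\bigr) \le C(T)$. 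This is essentially the same computation as in Corollary \ref{u-bounds-NSE} for $\partial_x u^\epsilon$, using $q_i-q_{i-1}\ge d_i$.

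For the comparison \eqref{v-bounds}, fix $t$ and a point $x$; the difference $v^\epsilon(t,x)-u^\epsilon(t,x)$ vanishes outside $\bigcup_i P_i(t)$ if we inspect the slopes carefully — more precisely one checks that on each interval $(q_{i-1}(t), q_i(t))$ both $v^\epsilon$ and $u^\epsilon$ are affine with the \emph{same} slope $w_i(t)=(u_i-u_{i-1})/(q_i-q_{i-1})$ only on the middle piece, while on the two half-particle caps $v^\epsilon$ is flattened to the constant endpoint value. Thus $|v^\epsilon(t,x)-u^\epsilon(t,x)| \le |w_i(t)|\cdot\epsilon$ pointwise (the discrepancy being at most the slope times the cap width $\epsilon$), so $\|v^\epsilon(t,\cdot)-u^\epsilon(t,\cdot)\|_{L^\infty_x} \le \epsilon \max_i |w_i(t)|$. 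Squaring, integrating in time, and using $\epsilon\,\max_i|w_i(t)|^2 \le \sum_i \epsilon |w_i(t)|^2$ together with \eqref{eps-weps-NSE} yields $\|v^\epsilon-u^\epsilon\|_{L^2_tL^\infty_x}^2 \le \epsilon \cdot \epsilon\sum_i\int_0^T|w_i(t)|^2\,dt \le C(T)\epsilon$, i.e. the claimed $\sqrt\epsilon$ rate. The initial-data bound \eqref{veps-ID-bound} follows by the same pointwise estimate at $t=0$ combined with the convergence \eqref{u0-NSE-bis} and the estimate \eqref{u0-conv-proof}: $\|v^\epsilon(0,\cdot)-u_0\|_{L^\infty} \le \|v^\epsilon(0,\cdot)-u^\epsilon(0,\cdot)\|_{L^\infty} + \|u^\epsilon(0,\cdot)-u_0\|_{L^\infty} \le C\sqrt\epsilon$, using that $\epsilon\max_i|w_i(0)|^2$ is controlled because $|u_{0,i}-u_{0,i-1}|^2/d_{0,i} \le 4\|u_0\|_{H^1}^2|q_{0,i}-q_{0,i-1}|/d_{0,i} \le C\epsilon$ by \eqref{qi-diff}, \eqref{di-LB}.

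The main obstacle I anticipate is being careful with the geometry near the particle boundaries: the decomposition of $I$ into the caps $P_i(t)$ and the open gaps, and verifying that the slope of $v^\epsilon$ on each gap is exactly $w_i(t)$ while $u^\epsilon$ has slope $(u_i-u_{i-1})/(q_i-q_{i-1})$ on the whole cell $(q_{i-1},q_i)$ — these are close but not identical, and the pointwise bound $|v^\epsilon-u^\epsilon|\le\epsilon|w_i|$ needs a short but honest estimate accounting for the fact that the two affine functions agree in the interior gap but differ on the caps where one is flat. Everything else is a direct consequence of the energy estimate \eqref{energy-NSE}, the distance bounds \eqref{di(t)-NSE}, and the velocity bound \eqref{u-Linf-NSE}.
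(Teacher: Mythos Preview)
Your overall strategy is correct and close to the paper's, but your comparison of $v^\epsilon$ and $u^\epsilon$ on the gap intervals contains a genuine error that happens, by luck, not to break the final estimate. You claim the difference ``vanishes outside $\bigcup_i P_i(t)$'' because the two functions ``are affine with the same slope $w_i(t)=(u_i-u_{i-1})/(q_i-q_{i-1})$ only on the middle piece.'' Both statements are false: on the gap $(q_{i-1}+\epsilon,\,q_i-\epsilon)$ the slope of $v^\epsilon$ is $w_i=(u_i-u_{i-1})/d_i$, while the slope of $u^\epsilon$ is $(u_i-u_{i-1})/(q_i-q_{i-1})=(u_i-u_{i-1})/(d_i+2\epsilon)$, and these differ. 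A direct computation shows that on the gap $|v^\epsilon-u^\epsilon|\le |w_i|\,\epsilon d_i/(d_i+2\epsilon)\le |w_i|\epsilon$, so your pointwise bound $|v^\epsilon-u^\epsilon|\le\epsilon|w_i|$ does hold over the whole cell, but not for the reason you give. The paper avoids this delicate geometry entirely: it simply observes that on each cell $[q_{i-1},q_i]$ both $u^\epsilon$ and $v^\epsilon$ take values between $u_{i-1}$ and $u_i$, hence $|v^\epsilon-u^\epsilon|\le|u_i-u_{i-1}|$; then $\max_i|u_i-u_{i-1}|^2\le(\max_i d_i)\sum_i|u_i-u_{i-1}|^2/d_i\le C\epsilon$ by the energy estimate and the upper bound \eqref{di(t)-NSE}. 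This is both simpler and more robust.

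Two smaller points. First, your $L^\infty$ argument is needlessly complicated: the paper just notes $\|v^\epsilon\|_{L^\infty}=\|u^\epsilon\|_{L^\infty}$, since on each cell $v^\epsilon$ (in its continuous form) interpolates between $u_{i-1}$ and $u_i$. Second, your initial-data bookkeeping is off: you write $|u_{0,i}-u_{0,i-1}|^2/d_{0,i}\le C\epsilon$ and then claim this controls $\epsilon\max_i|w_i(0)|^2$, but $|w_i(0)|^2$ has $d_{0,i}^2$ in the denominator, and $|q_{0,i}-q_{0,i-1}|/d_{0,i}$ is $O(1)$, not $O(\epsilon)$. The correct chain is $|w_i(0)|\le\|u_0\|_{H^1}(q_{0,i}-q_{0,i-1})^{1/2}/d_{0,i}\le C\epsilon^{-1/2}$, hence $\epsilon|w_i(0)|\le C\sqrt\epsilon$; alternatively, as the paper does, just use $|v^\epsilon(0,\cdot)-u^\epsilon(0,\cdot)|\le\max_i|u_{0,i}-u_{0,i-1}|\le C\sqrt\epsilon$ directly from \eqref{u0-conv-proof}.
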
 

\begin{proof}
By definition of $v^\epsilon$, we have $\|v^\epsilon\|_{L^\infty_{t,x}} = \|u^\epsilon\|_{L^\infty_{t,x}}$ and so the boundedness of $v^\epsilon$ follows from \eqref{u-Linf-NSE}. The control on the derivative in \eqref{dxv-bound-NSE} results from the observation that
\[
\partial_{x}v^{\epsilon}(t,x) = \sum_{i=1}^{N-1} w_{i}(t)\mathbf{1}_{(q_{i}+\epsilon, q_{i+1}-\epsilon)}.
\]
and the bound \eqref{eps-weps-NSE} on the components of $w^\ep$.\\
Next, coming back to the definitions of $u^\ep$ \eqref{u-NSE} and $v^\epsilon$ \eqref{def-veps}, we observe that on $[q_{i-1}(t), q_i(t)]$, both $u^\ep$ and $v^\ep$ take values in between $u_{i-1}(t)$ and $u_i(t)$, so that
\begin{equation} \label{u-v}
|u^\ep(t,x) - v^\ep(t,x)| \leq |u_i(t) - u_{i-1}(t)|. 
\end{equation}
Since, moreover, 
     \begin{equation} \begin{aligned}
         \max_i|u_{i}(t)  - u_{i-1}(t) |^2 &\le \sum_{i=1}^{N} |u_i (t)-u_{i-1} (t)|^2 \le \max_i d_i(t) \sum_{i=1}^N \frac{|u_i (t)-u_{i-1} (t)|^2}{d_i(t)},\label{veps-bound-NSE}
         \end{aligned}
     \end{equation} 
we can integrate in time and use the energy estimate \eqref{energy-NSE} combined with the upper bound on $d_i$ (which is independent of $i$) from \eqref{di(t)-NSE} to get 
\begin{equation*}
     \int_0^T \sup_{x \in (q_{i-1}(t), q_i(t))}|v^\epsilon(t,x) - u^\epsilon(t,x)|^2~dt  \leq   \int_0^T  \max_i|u_{i}  - u_{i-1} |^2(t)~dt  \le C(T)\epsilon,
    \end{equation*} 
    as desired. 
    For the initial data, we can substitute $t=0$ into \eqref{u-v} and repeat the argument of \eqref{u0-conv-proof}. This gives
    \begin{equation}
    \begin{aligned}
       \|v^\epsilon(0,\cdot) - u_0\|_{L^\infty(I)} &\le \|u^\epsilon(0,\cdot) - u_0\|_{L^\infty(I)} + \|u^\epsilon(0,\cdot) - v^\epsilon(0,\cdot)\|_{L^\infty((I)}  \\[1ex] &\le C\epsilon^{1/2} + \max_i |u_i(0) - u_{i-1}(0)| \\[1ex] &\le C \epsilon^{1/2}.
    \end{aligned}
    \end{equation}
\end{proof}

In order to pass to the limit in the non-linear convective term in the momentum equation \eqref{NSE-eps-mom}, we need to bound $\partial_t \chi^\epsilon$ and $\partial_t (\chi^\epsilon v^\epsilon)$ where we recall $\chi^\ep$ is defined in \eqref{def-vol}.
\begin{proposition} \label{prop:chi_t-NSE}
We have
$\|\partial_{t}\chi^{\epsilon}\|_{L^\infty_{t}H^{-1}_x} + \|\partial_{t}(\chi^{\epsilon}v^{\epsilon})\|_{L^{2}_{t}H^{-1}_{x}}\le C.$ 
\end{proposition}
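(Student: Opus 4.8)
The plan is to estimate the two contributions separately, reading the second one almost directly off the approximate momentum equation \eqref{NSE-eps-mom}. For $\partial_t\chi^\epsilon$, first observe that $P_0(t)$ and $P_N(t)$ are fixed while each $P_i(t)=[q_i(t)-\epsilon,q_i(t)+\epsilon]$, $1\le i\le N-1$, is translated rigidly at speed $\dot q_i=u_i$, so a direct distributional computation gives
$$\partial_t\chi^\epsilon(t,\cdot)=\sum_{i=1}^{N-1}u_i(t)\bigl[\delta_{\{x=q_i(t)+\epsilon\}}-\delta_{\{x=q_i(t)-\epsilon\}}\bigr].$$
Pairing with $\varphi\in H^1_0(I)$ and writing $\varphi(q_i+\epsilon)-\varphi(q_i-\epsilon)=\int_{q_i-\epsilon}^{q_i+\epsilon}\partial_x\varphi$, Cauchy--Schwarz yields $|\varphi(q_i+\epsilon)-\varphi(q_i-\epsilon)|\le\sqrt{2\epsilon}\,\|\partial_x\varphi\|_{L^2(P_i(t))}$. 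Since the intervals $P_i(t)$ are pairwise disjoint — a consequence of $d_i(t)>0$ from Proposition~\ref{prop:global-nse} — summing over $i$ and applying Cauchy--Schwarz once more in $i$ gives
$$|\langle\partial_t\chi^\epsilon(t,\cdot),\varphi\rangle|\le\|u^\epsilon\|_{L^\infty_{t,x}}\sqrt{2\epsilon}\,\sqrt{N}\,\Bigl(\sum_{i=1}^{N-1}\|\partial_x\varphi\|_{L^2(P_i(t))}^2\Bigr)^{1/2}\le\|u^\epsilon\|_{L^\infty_{t,x}}\sqrt{2\epsilon N}\,\|\partial_x\varphi\|_{L^2(I)}.$$
Because $N\propto\epsilon^{-1}$ the quantity $\epsilon N$ is bounded, and \eqref{u-Linf-NSE} controls $\|u^\epsilon\|_{L^\infty_{t,x}}$; hence $\|\partial_t\chi^\epsilon(t,\cdot)\|_{H^{-1}(I)}\le C$ uniformly in $t$, i.e. $\|\partial_t\chi^\epsilon\|_{L^\infty_t H^{-1}_x}\le C$. (Equivalently, one may note that $\partial_t\chi^\epsilon=-\partial_x(\chi^\epsilon v^\epsilon)$ with $\chi^\epsilon v^\epsilon=\sum_i u_i\mathbf{1}_{P_i(t)}$ bounded in $L^\infty_t L^2_x$.)

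For $\partial_t(\chi^\epsilon v^\epsilon)$ I would use the approximate momentum equation~\eqref{NSE-eps-mom}, which gives
$$\partial_t(\chi^\epsilon v^\epsilon)=-\partial_x\bigl(\chi^\epsilon(v^\epsilon)^2\bigr)+\partial_x w^\epsilon-\partial_x G^\epsilon+f^\epsilon\qquad\text{in }\mathcal{D}'([0,T)\times I),$$
so it suffices to bound each of the four terms on the right in $L^2_t H^{-1}_x$. Using the elementary estimate $\|\partial_x g\|_{H^{-1}(I)}\le\|g\|_{L^2(I)}$: the convective term is controlled by $\|\chi^\epsilon(v^\epsilon)^2\|_{L^2_{t,x}}\le\|v^\epsilon\|_{L^\infty_{t,x}}^2(2\epsilon N T)^{1/2}$, finite by \eqref{dxv-bound-NSE} and $\epsilon N\sim1$; the viscous term by $\|w^\epsilon\|_{L^2_{t,x}}\le C$ from Proposition~\ref{prop:weps-bdd-NSE}; the pressure term by $\|G^\epsilon\|_{L^2_{t,x}}\le\sqrt{T}\,\|G^\epsilon\|_{L^\infty_{t,x}}$, finite by \eqref{Geps-Linfty-NSE} (or, more directly, $\|\partial_x G^\epsilon\|_{L^2_{t,x}}$ is finite by \eqref{improved_Gx}); and the forcing by $\|f^\epsilon\|_{L^2_{t,x}}\le(2\epsilon N T)^{1/2}\|f\|_{L^\infty_{t,x}}$. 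Each of these embeds into $L^2_t L^2_x\hookrightarrow L^2_t H^{-1}_x$, and summing the four contributions yields $\|\partial_t(\chi^\epsilon v^\epsilon)\|_{L^2_t H^{-1}_x}\le C$, which completes the proof.

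The argument is essentially bookkeeping: all the genuinely analytic input — the lower bound $d_i(t)\ge c_1(T)\epsilon>0$, the $L^\infty$ bound on $u^\epsilon$ and $v^\epsilon$, the $L^2$ bound on $w^\epsilon$, and the $L^\infty$ bounds on $G^\epsilon$ and $\partial_x G^\epsilon$ — has already been established earlier in this subsection. The only step that demands a little care is the estimate for $\partial_t\chi^\epsilon$, where one must exploit both the disjointness of the supports $P_i(t)$ and the precise scaling $\epsilon N\sim1$ to absorb the factor $\sqrt{N}$ generated by the Dirac masses; dropping either ingredient would make the bound degenerate as $\epsilon\to0$.
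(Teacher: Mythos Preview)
Your proof is correct and follows essentially the same route as the paper. For $\partial_t(\chi^\epsilon v^\epsilon)$ both arguments read the bound directly off the approximate momentum equation~\eqref{NSE-eps-mom} and control each right-hand term in $L^2_t L^2_x$; for $\partial_t\chi^\epsilon$ the paper goes straight to the identity $\partial_t\chi^\epsilon=-\partial_x(\chi^\epsilon v^\epsilon)$ (which you mention parenthetically), whereas your primary argument estimates the Dirac sums directly via the scaling $\epsilon N\sim1$ --- a slightly longer but equally valid path to the same bound.
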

\begin{proof} 
Note that in the distributional sense, we have \begin{equation*} \begin{aligned}
        &\partial_{t}\chi^{\epsilon}(t,\cdot)= \sum_{i=1}^{N-1} \left[- u_i(t)\delta_{\{x = q_i -\epsilon\}} + u_i(t) \delta_{\{x=q_i + \epsilon\}} \right], \\[1ex]
        & \partial_{x}\chi^{\epsilon}(t,\cdot)= \sum_{i=1}^{N-1} \left[ \delta_{\{x = q_i -\epsilon\}} - \delta_{\{x=q_i + \epsilon\}} \right].
\end{aligned}
\end{equation*}
Also, since $v^\epsilon(t,x) = u_i(t)$ for $x \in (q_{i}(t)-\epsilon, q_i(t)+\epsilon)$, we have
\begin{equation} \label{vchi}
    v^\epsilon \chi^\epsilon(t,\cdot) = \sum_{i=1}^{N-1} u_i(t) \mathbf{1}_{(q_{i}(t)-\epsilon, q_i(t)+\epsilon)},
\end{equation}
and therefore
    \begin{equation*}
        \partial_{x}(v^\epsilon\chi^{\epsilon})(t,\cdot)= \sum_{i=1}^{N-1} \left[  u_i(t)\delta_{\{x = q_i -\epsilon\}} - u_i(t)\delta_{\{x=q_i + \epsilon\}} \right] = -\partial_t 
        \chi^\epsilon (t,\cdot).
    \end{equation*}
    Therefore for any $\phi \in H^{1}(I)$, using $\chi^\epsilon \le 1$, we have for a.e. $t$:
    \begin{equation*} \begin{aligned}
\langle \partial_{t}\chi^{\epsilon}(t,\cdot), \phi \rangle_{\mathcal{D}'(I) \times \mathcal{D}(I)}
&\le \int_I |\chi^\epsilon v^\epsilon(t,x)| |\partial_x\phi(x)| \,dx\\[1ex] &
            \le\|v^{\epsilon}\|_{L^{2}_{x}} \|\partial_x\phi\|_{L^{2}(I)} \end{aligned}
        \end{equation*}
        thanks to the bounds on $v^\epsilon$ from Proposition \ref{prop:veps-bound}. 
        Taking the $\esssup$ in time, and using that $\|v_\ep \|_{L^\infty_t L^2_x} \leq \sqrt{|I|} \|v_\ep \|_{L^\infty_{t,x}} = \|v_\ep \|_{L^\infty_{t,x}}$, we get the $L^\infty_t H^{-1}_x$ bound claimed in the proposition.  For $\chi^\ep v^\ep$, fix $\phi \in H^1(I)$ and a time $t \in (0,T)$. We get from \eqref{NSE-eps-mom}
    \begin{equation} \begin{aligned}
        |\langle \partial_t (\chi^\epsilon v^\epsilon)(t,\cdot), \phi \rangle| 
        &\le | \langle \chi^\epsilon (v^\epsilon)^2(t,\cdot), \partial_x\phi \rangle| +  | \langle w^\epsilon(t,\cdot) , \partial_x\phi \rangle| + | \langle G^\epsilon(t,\cdot) , \partial_x\phi  \rangle| +  | \langle   \sum_{i=1}^{N-1} \bar f_{i}\mathbf{1}_{P_i(t)} ,  \phi  \rangle| \\[1ex] 
        & \leq\left( \|v^\epsilon(t,\cdot)\|_{L^{\infty}_x}\|v^\epsilon(t,\cdot)\|_{L^{2}_x} + \|w^\epsilon(t,\cdot)\|_{L^{2}_x} + \|G^\epsilon(t,\cdot)\|_{L^{2}_x} +  \| \sum_{i=1}^{N-1} \bar f_{i}\mathbf{1}_{P_i(t)}\|_{L^{2}_{x}}\right)\|\phi\|_{H^{1}(I)}
        \\
        &\le  ( \|v^\epsilon(t,\cdot)\|_{L^{\infty}_x}^2 + \|w^\epsilon(t,\cdot)\|_{L^{2}_x} + \|G^\epsilon(t,\cdot)\|_{L^{\infty}_x} +  \| f(t,\cdot) \|_{L^{\infty}_{x}})\|\phi\|_{H^{1}(I)}. 
        \end{aligned}
    \end{equation} 
   Since $w^\epsilon$ is bounded in $L^2((0,T) \times I)$ (Propositions \ref{prop:weps-bdd-NSE}), while the other terms are bounded in $L^{\infty}((0,T) \times I)$, we deduce that $\partial_t (\chi^\epsilon v^\epsilon)$ is controlled in $L^2_tH^{-1}_x$.
\end{proof}

%
%
\subsection{Limit passage} \label{sec:limit-NSE}
We now use the  uniform bounds obtained in the previous section to derive convergences and identify the limits of each of the terms appearing in the PDE formulation \eqref{NSE-eps-cons}-\eqref{NSE-eps-transport}. 
\subsubsection{Convergences for the convective term}

We start by extracting convergent subsequences for the density $\rho^\epsilon$ and critical density $\rho^{\star, \epsilon}$. 
\begin{proposition} \label{prop:rho-strong-NSE}
    There exists $\rho,\rho^\star$ such that, up to a subsequence, $\rho^\epsilon \to \rho$ strongly in $C([0,T]; L^p(I))$ and $\rho^{\star,\epsilon} \to \rho^\star$ strongly in $C([0,T]; L^p(I))$ for any $p \in [1,\infty)$.
\end{proposition}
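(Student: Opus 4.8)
The plan is to obtain strong compactness in $C([0,T];L^p(I))$ for $\rho^\epsilon$ and $\rho^{\star,\epsilon}$ via an Aubin–Lions–Simon type argument, using the uniform $BV$-in-space bound together with the $H^{-1}$-in-time bound on the derivatives. First I would record the two relevant facts already established: from Proposition~\ref{prop:rho-BV-NSE} we have $\|\rho^\epsilon\|_{L^\infty(0,T;BV(I))} + \|\rho^{\star,\epsilon}\|_{L^\infty(0,T;BV(I))} \le C(T)$ and $\|\partial_t\rho^\epsilon\|_{L^2(0,T;H^{-1}(I))} + \|\partial_t\rho^{\star,\epsilon}\|_{L^2(0,T;H^{-1}(I))} \le C(T)$; and from \eqref{rho-bound-nse}, \eqref{rhost-bound-nse} the sequences are bounded in $L^\infty((0,T)\times I)$. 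Since $BV(I) \hookrightarrow\hookrightarrow L^p(I)$ compactly for every $p\in[1,\infty)$ (in one space dimension, as $I$ is bounded) and $L^p(I) \hookrightarrow H^{-1}(I)$ continuously, the Aubin–Lions–Simon lemma applies: a bounded sequence in $L^\infty(0,T;BV(I))$ with time derivative bounded in $L^2(0,T;H^{-1}(I))$ is relatively compact in $C([0,T];L^p(I))$ for every finite $p$. This yields, up to a subsequence, $\rho^\epsilon \to \rho$ and $\rho^{\star,\epsilon}\to\rho^\star$ strongly in $C([0,T];L^p(I))$.

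Concretely I would argue as follows. Fix $p\in[1,\infty)$. The embedding $BV(I)\hookrightarrow L^p(I)$ is compact and $L^p(I)\hookrightarrow H^{-1}(I)$ is continuous, so by Simon's theorem (see \cite{perrin2018one} or the standard references) the set $\{g : \|g\|_{L^\infty(0,T;BV(I))} \le M,\ \|\partial_t g\|_{L^2(0,T;H^{-1}(I))} \le M\}$ is relatively compact in $C([0,T];L^p(I))$. Apply this with $M = C(T)$ to $\rho^\epsilon$ to extract a subsequence converging strongly in $C([0,T];L^p(I))$ to some limit $\rho$; then apply it again along that subsequence to $\rho^{\star,\epsilon}$ to extract a further subsequence converging to some $\rho^\star$. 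A diagonal argument over an increasing sequence $p_k\to\infty$ upgrades this to convergence in $C([0,T];L^p(I))$ for all $p\in[1,\infty)$ simultaneously (the limit being independent of $p$ since convergence in $L^1$ already identifies it). The pointwise bounds $0<\rho<1$ and $0<\rho^\star\le 1$ for the limits follow from the uniform bounds \eqref{rho-bound-nse}, \eqref{rhost-bound-nse} passing to the limit a.e., and $\rho,\rho^\star\in L^\infty_tBV_x\cap BV_{t,x}$ follows from lower semicontinuity of the total variation under $L^1$ convergence applied to the bounds \eqref{rho-LinfBV}, \eqref{rho-BV}.

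The argument is essentially routine given the estimates already in hand; there is no genuine obstacle. The only point requiring a little care is the precise form of the compactness lemma invoked — one must use a version of Aubin–Lions–Simon valid for the non-reflexive endpoint space $L^\infty(0,T;BV(I))$, which is why the $H^{-1}$ time-derivative bound (rather than a stronger one) combined with the compact embedding $BV\hookrightarrow\hookrightarrow L^p$ suffices, and why the target is $C([0,T];L^p(I))$ rather than merely $L^q(0,T;L^p(I))$; here the $L^2(0,T;H^{-1})$ control of $\partial_t$ gives equicontinuity in time valued in $H^{-1}$, which combined with the uniform $BV$ bound and interpolation gives equicontinuity valued in $L^p$, hence the uniform-in-time conclusion. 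I would also remark that the same reasoning, applied to the pair at $t=0$ together with the convergences \eqref{conv-rho0}, \eqref{u0-NSE-bis}, pins down $\rho(0,\cdot)=\rho_0$ and $\rho^\star(0,\cdot)=\rho^\star_0$, though this identification of initial data may be deferred to the subsequent passage to the limit in the weak formulations.
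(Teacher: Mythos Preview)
Your proposal is correct and follows essentially the same approach as the paper: invoke the uniform $L^\infty_t BV_x$ and $L^2_t H^{-1}_x$ bounds from Proposition~\ref{prop:rho-BV-NSE}, then apply the Aubin--Lions--Simon lemma with the compact embedding $BV(I)\hookrightarrow\hookrightarrow L^p(I)$ and the continuous embedding $L^p(I)\hookrightarrow H^{-1}(I)$ to obtain strong convergence in $C([0,T];L^p(I))$. The paper's proof is slightly terser (it first extracts weak-$\star$ limits in $L^\infty_{t,x}$ from \eqref{rho-bound-nse}--\eqref{rhost-bound-nse} and then cites a specific reference for the compactness lemma), but the argument is the same.
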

\begin{proof}
Firstly, due to the bounds on $\rho^\epsilon$ and $\rho^{\star,\epsilon}$ (see \eqref{rho-bound-nse}-\eqref{rhost-bound-nse}), we have that there exist $\rho \in L^\infty_{t,x}$ with $0 < \rho < 1$ and $\rho^\star \in L^\infty_{t,x}$ with $0<\rho^\star \le 1$  such that, up to a subsequence,
\begin{align} \label{rho-Linf-NSE}
    \rho^\epsilon \rightharpoonup^\star \rho 
 &\text{ weakly-*  in } L^\infty((0,T)\times I), \\[1ex]
  \rho^{\star,\epsilon} \rightharpoonup^\star \rho^\star
 &\text{ weakly-*  in } L^\infty((0,T)\times I).
 \end{align}
Recall that, from Proposition \ref{prop:rho-BV-NSE}, we have the uniform $L^\infty_t BV_x$ estimates for both $\rho^\epsilon$ and $\rho^{\star,\epsilon}$ and uniform $L^2_t H^{-1}_x$  estimates for $\partial_t \rho^\epsilon$ and $\partial_t \rho^{\star,\epsilon}$. 
Therefore, using the Aubin-Lions-Simon lemma  (see  Theorem II.5.16 p.102 \cite{boyer2012}), for $p \in [1, \infty)$, we obtain:

\begin{equation}
    \begin{aligned} \label{rho-strong}
         &\rho^\epsilon \to \rho 
 \text{ strongly  in } C([0,T]; L^p(I)), \\[1ex]
  &\rho^{\star,\epsilon} \to \rho^\star
 \text{ strongly  in } C([0,T]; L^p(I)).
    \end{aligned}
\end{equation}
\end{proof} 
We can also show that $\chi^\epsilon$ shares the same limit as $\rho^\epsilon$.
\begin{proposition} \label{prop:chi-NSE}
    $\chi^{\epsilon} \rightharpoonup^{\star} \rho$  in $L^{\infty}_{t,x}$.
\end{proposition}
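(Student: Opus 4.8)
The plan is to show that $\chi^\epsilon$ and $\rho^\epsilon$ have the same weak-$*$ limit by estimating their difference in a negative Sobolev norm (or directly testing against smooth functions) and showing it vanishes as $\epsilon\to0$. First I would recall the explicit forms: $\rho^\epsilon(t,\cdot)=\sum_{i=1}^N \rho_i(t)\mathbf 1_{[q_{i-1}(t),q_i(t))}$ with $\rho_i(t)=2\epsilon/(d_i(t)+2\epsilon)$, while $\chi^\epsilon(t,\cdot)=\sum_{i=0}^N \mathbf 1_{P_i(t)}$ with $P_i(t)=[q_i(t)-\epsilon,q_i(t)+\epsilon]$. The key geometric observation is that the "mass" each carries over an interval $[q_{i-1}(t),q_i(t))$ is essentially the same: $\rho^\epsilon$ puts mass $\rho_i(t)(q_i(t)-q_{i-1}(t))=2\epsilon$ there (since $q_i-q_{i-1}=d_i+2\epsilon$), and $\chi^\epsilon$ puts mass $2\epsilon$ there as well (one full bump of width $2\epsilon$, splitting the half-bumps at the endpoints $q_{i-1},q_i$ between neighbours). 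So on the coarse scale of size $\sim \epsilon/\delta$ the two functions have matching averages.

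The main step is then a quantitative comparison. Fix $\varphi\in C_c^\infty((0,T)\times I)$ (or $\varphi\in\mathcal D(I)$ at a.e.\ time). On each cell $[q_{i-1}(t),q_i(t))$ write
\begin{equation*}
\int_{q_{i-1}(t)}^{q_i(t)}(\rho^\epsilon-\chi^\epsilon)(t,x)\varphi(t,x)\,dx
= \int_{q_{i-1}(t)}^{q_i(t)}(\rho^\epsilon-\chi^\epsilon)(t,x)\big(\varphi(t,x)-\varphi(t,q_{i-1}(t))\big)\,dx,
\end{equation*}
which is legitimate because $\int_{q_{i-1}(t)}^{q_i(t)}(\rho^\epsilon-\chi^\epsilon)(t,x)\,dx=0$ by the mass-matching above (here one must be slightly careful near $x=0$ and $x=1$ with the half-bumps $P_0,P_N$, but $\chi^\epsilon$ restricted to $[0,\epsilon]$ has mass $\epsilon$ matching $\rho^\epsilon$ there since $d_1+2\epsilon = q_1-q_0$ and the first half-cell carries $2\epsilon/(d_1+2\epsilon)\cdot(\ldots)$ — I would just absorb these two boundary cells into an $O(\epsilon\|\varphi\|_\infty)$ error). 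Since $|\varphi(t,x)-\varphi(t,q_{i-1}(t))|\le \|\partial_x\varphi\|_{L^\infty}\,|q_i(t)-q_{i-1}(t)|\le \|\partial_x\varphi\|_{L^\infty}\cdot 2\epsilon/\delta$ on the $i$-th cell (using \eqref{qi-diff}-type bounds, i.e.\ $q_i-q_{i-1}\le C\epsilon$ from \eqref{di(t)-NSE}), and $|\rho^\epsilon-\chi^\epsilon|\le 2$ pointwise, summing over the $N\sim\epsilon^{-1}$ cells gives
\begin{equation*}
\left|\int_I(\rho^\epsilon-\chi^\epsilon)(t,x)\varphi(t,x)\,dx\right|
\le C\epsilon\,\|\partial_x\varphi(t,\cdot)\|_{L^\infty(I)} + C\epsilon\|\varphi(t,\cdot)\|_{L^\infty},
\end{equation*}
and integrating in $t$ shows $\rho^\epsilon-\chi^\epsilon\to0$ in, say, $L^\infty(0,T;W^{-1,\infty}(I))$, hence in $\mathcal D'((0,T)\times I)$. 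Combined with Proposition \ref{prop:rho-strong-NSE}, which gives $\rho^\epsilon\to\rho$ strongly (a fortiori $\rho^\epsilon\rightharpoonup^* \rho$ in $L^\infty_{t,x}$), and the uniform bound $\|\chi^\epsilon\|_{L^\infty_{t,x}}\le 1$ guaranteeing weak-$*$ compactness, we conclude that any weak-$*$ limit of $\chi^\epsilon$ coincides with $\rho$, so the whole sequence converges: $\chi^\epsilon\rightharpoonup^*\rho$ in $L^\infty_{t,x}$.

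The only mild obstacle is the bookkeeping at the two endpoints $x=0$ and $x=1$ and making precise the claim that each cell carries matching mass $2\epsilon$ — this needs the identity $q_i(t)-q_{i-1}(t)=d_i(t)+2\epsilon$ together with the fact that the half-bumps of $\chi^\epsilon$ centred at the interior nodes $q_i(t)$ are shared between the two adjacent cells, so that telescoping leaves exactly one unit bump of total mass $2\epsilon$ per cell; alternatively one can avoid cell-by-cell mass cancellation and just estimate $\int(\rho^\epsilon-\chi^\epsilon)\varphi$ directly, noting that both quantities, tested against a fixed smooth $\varphi$, converge to $\int \rho\,\varphi$ because on each cell they are $O(\epsilon)$-close in the dual of Lipschitz functions. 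Everything else is a routine density argument.
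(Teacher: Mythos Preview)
Your proposal is correct and follows essentially the same approach as the paper: decompose $\chi^\epsilon-\rho=(\chi^\epsilon-\rho^\epsilon)+(\rho^\epsilon-\rho)$, use the cellwise zero-mean identity $\int_{q_{i-1}(t)}^{q_i(t)}(\chi^\epsilon-\rho^\epsilon)\,dx=0$ together with a Lipschitz argument on the test function, and conclude via the strong convergence $\rho^\epsilon\to\rho$ from Proposition~\ref{prop:rho-strong-NSE}. The paper states exactly these ingredients and defers the Lipschitz estimate you wrote out to \cite[Lemma~3.6]{lefebvre2008micro}; incidentally, your worry about the endpoint cells is unnecessary, since with $P_0=[0,\epsilon]$ and $P_N=[1-\epsilon,1]$ the mass cancellation $\int_{q_{i-1}}^{q_i}(\chi^\epsilon-\rho^\epsilon)=0$ holds exactly for every $i=1,\dots,N$.
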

\begin{proof} 
     We decompose \begin{equation*}
        \chi^{\epsilon} - \rho = (\chi^{\epsilon} - \rho^{\epsilon}) + (\rho^{\epsilon} - \rho).
    \end{equation*} 
    It is easy to verify that $\int_{q_{i-1}(t)}^{q_{i}(t)} (\chi^\epsilon - \rho^\epsilon)\,dx = 0$ for any $i=1,...,N$. One can combine this with the convergence of $\rho^\epsilon$ from \eqref{rho-strong} to conclude. We refer to the proof of Lemma 3.6 in \cite{lefebvre2008micro} for the details.
\end{proof}
From the control  of the velocity $u^\ep$, there exists $u \in L^2(0,T; H^1_0(I)) \cap L^\infty\big((0,T) \times I\big) $ such that up to a subsequence,
\begin{equation}  \label{ueps-conv-NSE} \begin{aligned}
    &u^\epsilon \rightharpoonup u \text{ weakly in } &&L^2(0,T; H^1_0(I), \\[1ex] &u^\epsilon \rightharpoonup u  \text{ weakly-* in } &&L^\infty\big((0,T) \times I\big).
    \end{aligned}
\end{equation}
Thanks to a compensated compactness theorem, we are now able to pass to the limit in the nonlinear convective terms
\begin{proposition} \label{prop:chiv-conv}
     $\chi^\epsilon v^\epsilon \rightharpoonup \rho u$ and $\chi^\epsilon (v^\epsilon)^2 \rightharpoonup \rho u^2$ weakly in $L^2(0,T; L^\infty(I) )$. 
\end{proposition}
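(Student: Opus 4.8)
The plan is to pass to the limit in the two products via a compensated‑compactness argument that trades time‑regularity of one factor against space‑regularity of the other, exactly in the spirit of the limit passage in \cite{lefebvre2008micro}.

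\textbf{Step 1 (weak limits of the factors).} From Proposition~\ref{prop:chi-NSE} we already have $\chi^\epsilon \rightharpoonup^\star \rho$ in $L^\infty((0,T)\times I)$. Writing $v^\epsilon = u^\epsilon + (v^\epsilon - u^\epsilon)$, the weak convergence $u^\epsilon \rightharpoonup u$ in $L^2(0,T;H^1_0(I))$ from \eqref{ueps-conv-NSE}, together with $\|v^\epsilon - u^\epsilon\|_{L^2_tL^\infty_x} \le C(T)\sqrt\epsilon \to 0$ from \eqref{v-bounds} and the embedding $L^\infty(I)\hookrightarrow L^2(I)$, yields $v^\epsilon \rightharpoonup u$ weakly in $L^2((0,T)\times I)$. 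Moreover, since $0\le\chi^\epsilon\le 1$ and $v^\epsilon$ is bounded in $L^\infty_{t,x}$ by \eqref{dxv-bound-NSE}, both $\chi^\epsilon v^\epsilon$ and $\chi^\epsilon (v^\epsilon)^2$ are bounded in $L^\infty((0,T)\times I)$, hence in $L^2(0,T;L^\infty(I))$; so it suffices to identify their limits at the level of distributions and then invoke these uniform bounds.

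\textbf{Step 2 (the linear term).} I would use the classical compensated‑compactness lemma of Lions/Aubin type: if $a^\epsilon\rightharpoonup a$ and $b^\epsilon\rightharpoonup b$ in $L^2((0,T)\times I)$, with $(\partial_t a^\epsilon)_\epsilon$ bounded in $L^1(0,T;W^{-1,1}(I))$ and $(b^\epsilon)_\epsilon$ bounded in $L^2(0,T;H^1(I))$, then $a^\epsilon b^\epsilon \rightharpoonup ab$ in $\mathcal{D}'((0,T)\times I)$. I apply it with $a^\epsilon = \chi^\epsilon$ — the bound $\|\partial_t\chi^\epsilon\|_{L^\infty_tH^{-1}_x}\le C$ from Proposition~\ref{prop:chi_t-NSE} and the embedding $H^{-1}(I)\hookrightarrow W^{-1,1}(I)$ provide the time estimate — and $b^\epsilon = v^\epsilon$, which is bounded in $L^2(0,T;H^1(I))$ by \eqref{dxv-bound-NSE}. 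This gives $\chi^\epsilon v^\epsilon \rightharpoonup \rho u$ in $\mathcal{D}'((0,T)\times I)$, and since $\chi^\epsilon v^\epsilon$ is uniformly bounded in $L^\infty_{t,x}$ this upgrades to the asserted weak convergence in $L^2(0,T;L^\infty(I))$ (and in particular $\chi^\epsilon v^\epsilon \rightharpoonup \rho u$ weakly in $L^2((0,T)\times I)$).

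\textbf{Step 3 (the quadratic term).} Because $\chi^\epsilon$ is $\{0,1\}$-valued, $\chi^\epsilon (v^\epsilon)^2 = (\chi^\epsilon v^\epsilon)\, v^\epsilon$, and I would apply the same lemma with $a^\epsilon = \chi^\epsilon v^\epsilon$ and $b^\epsilon = v^\epsilon$. Here $a^\epsilon$ plays the role of the time‑regular factor: by Proposition~\ref{prop:chi_t-NSE}, $\|\partial_t(\chi^\epsilon v^\epsilon)\|_{L^2_tH^{-1}_x}\le C$, hence $(\partial_t(\chi^\epsilon v^\epsilon))_\epsilon$ is bounded in $L^1(0,T;W^{-1,1}(I))$, and by Step~2 we know $\chi^\epsilon v^\epsilon \rightharpoonup \rho u$ in $L^2((0,T)\times I)$; the factor $b^\epsilon = v^\epsilon$ remains the space‑regular one, with $v^\epsilon\rightharpoonup u$ in $L^2((0,T)\times I)$ and $v^\epsilon$ bounded in $L^2(0,T;H^1(I))$. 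The lemma then gives $\chi^\epsilon (v^\epsilon)^2 \rightharpoonup (\rho u)\, u = \rho u^2$ in $\mathcal{D}'((0,T)\times I)$, and again the uniform $L^\infty$ bound promotes this to weak convergence in $L^2(0,T;L^\infty(I))$.

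\textbf{Expected main obstacle.} The quadratic term is the delicate point: one cannot pass to the limit in $(v^\epsilon)^2$ directly, since $v^\epsilon$ converges only weakly and no time-derivative bound on $v^\epsilon$ (nor on $u^\epsilon$) alone is available, so no strong $L^2$ compactness of the velocity field is at hand. The resolution is to route the argument through the composite quantity $\chi^\epsilon v^\epsilon$, which \emph{does} inherit a time‑derivative estimate, ultimately coming from the approximate momentum equation \eqref{NSE-eps-mom}. A secondary technical nuisance is that $L^\infty$ and $L^2(0,T;L^\infty(I))$ are not reflexive, so the limits must first be identified in $\mathcal{D}'$ and then transferred to the stated topology by means of the uniform $L^\infty_{t,x}$ bounds established in Step~1.
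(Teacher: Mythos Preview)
Your proposal is correct and follows essentially the same approach as the paper: apply a compensated-compactness lemma pairing the time-derivative bounds on $\chi^\epsilon$ and $\chi^\epsilon v^\epsilon$ from Proposition~\ref{prop:chi_t-NSE} with the space-derivative bound on $v^\epsilon$ from \eqref{dxv-bound-NSE}, identify the limits in $\mathcal{D}'$, then upgrade via the uniform $L^\infty$ bounds. You are simply more explicit than the paper in first identifying $v^\epsilon \rightharpoonup u$ and in writing the factorisation $\chi^\epsilon (v^\epsilon)^2 = (\chi^\epsilon v^\epsilon)\, v^\epsilon$, but the argument is the same.
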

\begin{proof}
    We combine the estimate on $\partial_t \chi^\epsilon$ from Proposition \ref{prop:chi_t-NSE} with the estimate for $\partial_x v^\epsilon$ \eqref{dxv-bound-NSE} and apply a classical compensated compactness argument (e.g. Lemma 5.1. of \cite{mathlions}) to get $\chi^\epsilon v^\epsilon \to \rho u$ in the sense of distributions. Then using the boundedness of $\|\chi^\ep v^\ep\|_{L^2_t L^\infty_x}$ (from \eqref{dxv-bound-NSE} and $|\chi^\epsilon| \le 1$) and the uniqueness of limits in $\mathcal{D}'$, we obtain the specified convergence. For $\chi (v^\epsilon)^2$, the argument is the same but we use the estimate for $\partial_t (\chi^\epsilon v^\epsilon)$ from Proposition \ref{prop:chi_t-NSE} instead of the estimate for $\partial_t \chi^\epsilon $.
\end{proof}

\subsubsection{Convergences for other non-linear terms}
A key difficulty of the limit passage is identifying the limits for the non-linear terms $w^\epsilon$ and $G^\epsilon$. The former follows from the strong convergences of $\rho^\epsilon$ and $\rho^{\star,\epsilon}$ obtained in Proposition \ref{prop:rho-strong-NSE}.
\begin{lemma} \label{lemma-Geps-NSE}
    $G^{\epsilon} \to G$ in $L^{1}_{t,x}$ where $G = (\rho / \rho^{\star})^{\gamma}$ almost everywhere.
\end{lemma}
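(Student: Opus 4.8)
The plan is to sandwich $G^\epsilon$ between the piecewise constant function $\widetilde G^\epsilon := (\rho^\epsilon/\rho^{\star,\epsilon})^\gamma$ and its limit. Note first that, by the identity $G_i^\epsilon(t) = (\rho_i(t)/\rho_i^{\star})^\gamma$ recalled just before \eqref{G-defn-nse}, the function $\widetilde G^\epsilon$ is exactly $\sum_{i=1}^N G_i^\epsilon(t)\,\mathbf 1_{[q_{i-1}(t),q_i(t))}(x)$, i.e. the piecewise constant analogue of the continuous interpolation $G^\epsilon$. The proof then splits into two steps: $G^\epsilon-\widetilde G^\epsilon\to 0$ in $L^1_{t,x}$, and $\widetilde G^\epsilon\to (\rho/\rho^\star)^\gamma$ in $L^1_{t,x}$.

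For the first step, observe that $G^\epsilon$ and $\widetilde G^\epsilon$ both equal $G_i^\epsilon(t)$ on each bulk interval $(q_{i-1}(t)+\epsilon,q_i(t)-\epsilon)$, so they can differ only on the union $\bigcup_{i=0}^N P_i(t)$ of the length-$2\epsilon$ intervals around the particle centres. On $P_i(t)$ with $1\le i\le N-1$, both functions take values between $G_i^\epsilon(t)$ and $G_{i+1}^\epsilon(t)$, hence their difference is bounded by $|G_{i+1}^\epsilon(t)-G_i^\epsilon(t)| = 2\epsilon\,|\partial_x G^\epsilon\mathbf 1_{P_i(t)}|\le 2\epsilon\,\|\partial_x G^\epsilon\|_{L^\infty_{t,x}}$, while on the two end intervals $P_0,P_N$ it is crudely bounded by $2\|G^\epsilon\|_{L^\infty_{t,x}}$. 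Summing and using $N\epsilon\le C$ together with \eqref{Geps-Linfty-NSE} and \eqref{improved_Gx},
\[
\int_I |G^\epsilon(t,x)-\widetilde G^\epsilon(t,x)|\,dx \le 4(N-1)\epsilon^2\,\|\partial_x G^\epsilon\|_{L^\infty_{t,x}} + 4\epsilon\,\|G^\epsilon\|_{L^\infty_{t,x}} \le C(T)\,\epsilon,
\]
and integrating in time yields $\|G^\epsilon-\widetilde G^\epsilon\|_{L^1((0,T)\times I)}\le C(T)\epsilon\to 0$.

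For the second step, by \eqref{rho-bound-nse}--\eqref{rhost-bound-nse} the pair $(\rho^\epsilon(t,x),\rho^{\star,\epsilon}(t,x))$ takes values in the fixed compact set $K:=[0,1]\times[\tfrac{2}{C_0+2},1]$, and these bounds pass to the weak-$\star$ limits so that $(\rho,\rho^\star)\in K$ a.e. and $(\rho/\rho^\star)^\gamma$ is well defined. Since $\gamma\ge 1$, the map $(a,b)\mapsto (a/b)^\gamma$ is $C^1$ on $K$ (its $a$-derivative $\gamma a^{\gamma-1}b^{-\gamma}$ is continuous up to $a=0$), hence Lipschitz on $K$ with some constant $L=L(C_0,\gamma)$; therefore
\[
\|\widetilde G^\epsilon - (\rho/\rho^\star)^\gamma\|_{L^1((0,T)\times I)} \le L\bigl(\|\rho^\epsilon-\rho\|_{L^1((0,T)\times I)} + \|\rho^{\star,\epsilon}-\rho^\star\|_{L^1((0,T)\times I)}\bigr),
\]
which tends to $0$ by the strong convergences $\rho^\epsilon\to\rho$, $\rho^{\star,\epsilon}\to\rho^\star$ in $C([0,T];L^1(I))$ from Proposition \ref{prop:rho-strong-NSE}. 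Combining the two steps gives $G^\epsilon\to G$ in $L^1_{t,x}$ with $G=(\rho/\rho^\star)^\gamma$ a.e.

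The argument is essentially routine; the one point genuinely needed is the uniform-in-$\epsilon$ lower bound $\rho^{\star,\epsilon}\ge \tfrac{2}{C_0+2}>0$ of \eqref{rhost-bound-nse} (itself a consequence of $d_{0,i}^\star\le C_0\epsilon$ from Proposition \ref{prop:ID_micro}), which is precisely what keeps the nonlinearity $(a/b)^\gamma$ non-degenerate and makes the limit passage legitimate; the only other bookkeeping point is the interpolation-error estimate of Step~1, for which the bound \eqref{improved_Gx} on $\partial_x G^\epsilon$ is exactly what is required.
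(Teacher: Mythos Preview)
Your proof is correct and follows essentially the same two-step decomposition as the paper: compare $G^\epsilon$ with the piecewise constant $\widetilde G^\epsilon=(\rho^\epsilon/\rho^{\star,\epsilon})^\gamma$, then pass to the limit in $\widetilde G^\epsilon$ using the strong convergences of Proposition~\ref{prop:rho-strong-NSE}. The paper states the first step as an $L^\infty_{t,x}$ estimate $\|G^\epsilon-\widetilde G^\epsilon\|_{L^\infty}\le 2\epsilon\|\partial_x G^\epsilon\|_{L^\infty}$ (glossing over the boundary pieces $P_0,P_N$ where $G^\epsilon$ is forced to $0$), whereas you carry out the $L^1$ estimate directly and account for the end intervals separately; your version is slightly more careful on this point but otherwise identical in spirit.
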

\begin{proof}
    Defining $\tilde{G}^{\epsilon} = (\rho^{\epsilon} / \rho^{\star, \epsilon})^{\gamma}$, first note this function is bounded due to the bounds on $\rho^\epsilon$ and $\rho^{\star,\epsilon}$ in \eqref{rho-bound-nse} and \eqref{rhost-bound-nse} respectively. Then it is clear from the strong convergences \eqref{rho-strong} that $\tilde{G}^{\epsilon} \to (\rho / \rho^{\star})^{\gamma}$ in $L^1_{t,x}$. We can compare this with the definition of $G^\ep$ from \eqref{G-defn-nse}. Computing the difference $\tilde{G}^{\epsilon} - G^{\epsilon}$
    for a fixed time $t$ on each particle, one can also verify that (see the proof of Proposition \ref{prop:veps-bound} for a similar computation)
    \begin{equation}
        \|G^{\epsilon}(t,\cdot) - \tilde{G}^{\epsilon}(t,\cdot)\|_{L^{\infty}(I)} \le \max_i|G_{i+1}(t)-G_{i}(t)|,
    \end{equation} and so \begin{equation}
        \|G^{\epsilon} - \tilde{G}^{\epsilon}\|_{L^{\infty}_{t,x}} \le 2\epsilon \|\partial_{x}G^{\epsilon}\|_{L^{\infty}_{t,x}} \to 0
    \end{equation} as $\epsilon \to 0$, thanks to the boundedness of $\partial_x G^\epsilon$ shown in Lemma \ref{lem:NSE-dxG}. Of course, this implies convergence in $L^1_{t,x}$. Finally, we combine the above convergences with the decomposition $G^{\epsilon} - (\rho / \rho^{\star})^{\gamma} = (G^{\epsilon} - \tilde{G}^{\epsilon}) + (\tilde{G}^{\epsilon} -  (\rho / \rho^{\star})^{\gamma})$ to obtain the claimed result.
\end{proof}
We now identify the limit of $w^\epsilon$. 
From Proposition~\ref{prop:weps-bdd-NSE}, there exists a limit  $w$ such that, up to a subsequence, 
\begin{equation} \label{weps-conv-NSE}
    w^{\epsilon} \rightharpoonup w \text{ weakly in } L^{2}_{t,x}.
\end{equation} 
and the next lemma allows us to identify this limit in terms of $u$ and $\rho$.
\begin{lemma}  $w = \partial_x u / (1-\rho)$ a.e. in $(0,T) \times I$. \label{lemma:weps-NSE}
\end{lemma}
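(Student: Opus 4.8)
The plan is to compare $w^\epsilon$ with its piecewise-\emph{constant} counterpart
\[
\tilde w^\epsilon(t,x):=\sum_{i=1}^{N} w_i(t)\,\mathbf{1}_{[q_{i-1}(t),q_i(t))}(x),
\]
for which the identification is immediate. On $(q_{i-1}(t),q_i(t))$ one has $1-\rho^\epsilon=d_i(t)/(q_i(t)-q_{i-1}(t))$ and $\tilde w^\epsilon=(u_i(t)-u_{i-1}(t))/d_i(t)$, so that $(1-\rho^\epsilon)\,\tilde w^\epsilon=(u_i-u_{i-1})/(q_i-q_{i-1})=\partial_x u^\epsilon$ a.e.; the only thing really left to prove is then that $w^\epsilon$ and $\tilde w^\epsilon$ have the same weak limit.

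For the limit of $\tilde w^\epsilon$, note that by \eqref{rho-bound-nse} the densities $\rho^\epsilon$ stay bounded away from $1$ uniformly on $[0,T]$, hence $1/(1-\rho^\epsilon)$ is uniformly bounded, and from Proposition~\ref{prop:rho-strong-NSE} together with $\bigl|\tfrac{1}{1-\rho^\epsilon}-\tfrac{1}{1-\rho}\bigr|\le\tfrac{|\rho^\epsilon-\rho|}{(1-\rho^\epsilon)(1-\rho)}$ it converges to $1/(1-\rho)$ strongly in $L^2_{t,x}$. Combining this with $\partial_x u^\epsilon\rightharpoonup\partial_x u$ weakly in $L^2_{t,x}$ (from \eqref{ueps-conv-NSE}) through a weak--strong product, we get for every $\phi\in C^\infty_c((0,T)\times I)$
\[
\int_0^T\!\!\!\int_I \tilde w^\epsilon\,\phi\,dx\,dt=\int_0^T\!\!\!\int_I \partial_x u^\epsilon\,\frac{\phi}{1-\rho^\epsilon}\,dx\,dt\;\xrightarrow{\epsilon\to0}\;\int_0^T\!\!\!\int_I \frac{\partial_x u}{1-\rho}\,\phi\,dx\,dt .
\]

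The main obstacle is to show $w^\epsilon-\tilde w^\epsilon\to0$ in $\mathcal D'((0,T)\times I)$. The difference is supported in the transition layers $P_i(t)=(q_i(t)-\epsilon,q_i(t)+\epsilon)$, whose total measure does \emph{not} vanish as $\epsilon\to0$, so a naive $L^1$ bound is useless. The key observation is that the explicit affine formula \eqref{w-defn-nse} yields the cancellation $\int_{P_i(t)}(w^\epsilon-\tilde w^\epsilon)\,dx=0$ for each $i$, while $|w^\epsilon-\tilde w^\epsilon|\le|w_{i+1}(t)-w_i(t)|$ on $P_i(t)$. Subtracting the constant $\phi(t,q_i(t))$ on each layer and using $|P_i(t)|=2\epsilon$, one obtains
\[
\Bigl|\int_0^T\!\!\!\int_I(w^\epsilon-\tilde w^\epsilon)\,\phi\,dx\,dt\Bigr|\le 2\epsilon^{2}\,\|\partial_x\phi\|_{L^\infty_{t,x}}\int_0^T\sum_{i=1}^{N-1}|w_{i+1}(t)-w_i(t)|\,dt .
\]
It then suffices to control the remaining sum: Cauchy--Schwarz gives $\sum_{i=1}^{N-1}|w_{i+1}-w_i|\le 2\sqrt N\bigl(\sum_{i=1}^{N}|w_i|^2\bigr)^{1/2}$, while the energy bound \eqref{eps-weps-NSE} yields $\int_0^T\sum_{i=1}^{N}|w_i(t)|^2\,dt\le C(T)/\epsilon$; since $N\propto\epsilon^{-1}$ this produces $\int_0^T\sum_{i=1}^{N-1}|w_{i+1}-w_i|\,dt\le C(T)\epsilon^{-1}$, so the right-hand side above is $O(\epsilon)\to0$.

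Putting the two steps together, for every $\phi\in C^\infty_c((0,T)\times I)$,
\[
\int_0^T\!\!\!\int_I w^\epsilon\,\phi\,dx\,dt=\int_0^T\!\!\!\int_I(w^\epsilon-\tilde w^\epsilon)\,\phi\,dx\,dt+\int_0^T\!\!\!\int_I\tilde w^\epsilon\,\phi\,dx\,dt\;\xrightarrow{\epsilon\to0}\;\int_0^T\!\!\!\int_I \frac{\partial_x u}{1-\rho}\,\phi\,dx\,dt ,
\]
and since also $w^\epsilon\rightharpoonup w$ weakly in $L^2_{t,x}$ by \eqref{weps-conv-NSE}, the uniqueness of distributional limits forces $w=\partial_x u/(1-\rho)$ a.e. in $(0,T)\times I$, the division being legitimate because $0<\rho<1$ a.e. Everything apart from the cancellation/oscillation estimate over the $O(1)$-measure layers $\bigcup_i P_i(t)$ is a routine weak--strong passage to the limit; that estimate is the only genuinely delicate point.
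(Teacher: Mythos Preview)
Your proof is correct, and the overall structure---introducing the piecewise-constant $\tilde w^\epsilon$, observing that $\tilde w^\epsilon=\partial_x u^\epsilon/(1-\rho^\epsilon)$, and then identifying the limit of each piece---is the same as the paper's. The point of departure is how you dispose of $w^\epsilon-\tilde w^\epsilon$. The paper argues that this difference goes to zero \emph{strongly} in $L^1_{t,x}$: on each half-layer the difference is dominated by $|w_{i+1}-w_i|$, giving $\int_0^T\!\!\int_I|w^\epsilon-\tilde w^\epsilon|\,dx\,dt\le C\int_0^T\epsilon\sum_i|w_i|\,dt$, and then uses $\epsilon|w_i|\le C\sqrt\epsilon\,|u_i-u_{i-1}|/\sqrt{d_i}$ together with the energy estimate and Cauchy--Schwarz to force the right-hand side to zero. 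Your route is weaker but more robust: you only need convergence in $\mathcal D'$, and you obtain it from the exact zero-mean cancellation $\int_{P_i}(w^\epsilon-\tilde w^\epsilon)\,dx=0$ (which follows by symmetry of the affine transition and the jump of $\tilde w^\epsilon$ at $q_i$), subtracting $\phi(t,q_i)$ on each layer to gain the extra factor $\epsilon$ from the test function. This buys you one full power of $\epsilon$ without needing any sharp quantitative control of $\epsilon\sum_i|w_i|$; only the crude bound $\int_0^T\sum_i|w_i|^2\,dt\le C/\epsilon$ from \eqref{eps-weps-NSE} is used. The paper's approach, when it works, yields a stronger mode of convergence; yours is a clean oscillation argument that sidesteps the delicate point entirely.
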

\begin{proof}
First let us show that $\partial_x u_\ep/(1-\rho_\ep) - w_\ep$ converges strongly in $L^1$ to $0$. 
We recall that by definition $\rho^\ep$ is on constant on $(q_{i-1}, q_i)$ equal to $\rho_i$, as well as $\partial_x u^\ep$ which is equal to $\dfrac{u_i - u_{i-1}}{q_i - q_{i-1}}$. 
Hence $\dfrac{\partial_x u^\ep}{1-\rho^\ep} = \dfrac{u_i - u_{i-1}}{d_i} = w_i$, and $\dfrac{\partial_x u^\ep}{1-\rho^\ep}$ coincides with $w^\ep$ on the intervals $(q_{i-1} + \ep , q_i - \ep)$.
To demonstrate the convergence to 0, it remains then to estimate the difference between the two quantities on the rest of the interval $(q_{i-1},q_i)$, namely on $(q_{i-1}, q_{i-1} + \ep)$ and $(q_i-\ep, q_i)$. 
On the interval  $(q_{i-1}, q_{i-1} + \ep)$, we have
\begin{align*}
\left(\dfrac{\partial_x u^\ep}{1-\rho^\ep} - w^\ep\right)\mathbf{1}_{(q_{i-1}, q_{i-1} + \ep)}
& = w_i - \left(w_{i-1} + \dfrac{x - (q_{i-1} -\ep)}{2\ep} (w_i - w_{i-1})\right)\\
& = \left(1 -  \dfrac{x - (q_{i-1} -\ep)}{2\ep}\right) (w_i - w_{i-1})
\end{align*}
while on $(q_i-\ep, q_i)$:
\begin{align*}
\left(\dfrac{\partial_x u^\ep}{1-\rho^\ep} - w^\ep\right)\mathbf{1}_{(q_i-\ep, q_i)}
& = w_i - \left(w_{i} + \dfrac{x - (q_{i} -\ep)}{2\ep} (w_{i+1} - w_i)\right)\\
& = -\dfrac{x - (q_{i} -\ep)}{2\ep} (w_{i+1} - w_i).
\end{align*}
As a consequence, we have
\begin{align*}
\int_0^T \!\!\!\int_I \left|\dfrac{\partial_x u^\ep}{1-\rho^\ep} - w^\ep\right| dx dt
& \leq \int_0^T \sum_{i=1}^{N} \int_{q_{i-1}}^{q_i} \left|\dfrac{\partial_x u^\ep}{1-\rho^\ep} - w^\ep\right| dx dt \\
& \leq \int_0^T \sum_{i=1}^{N} \int_{q_{i-1}}^{q_{i-1} + \ep } \left|\dfrac{\partial_x u^\ep}{1-\rho^\ep} - w^\ep\right| dx dt 
    +  \int_0^T \sum_{i=1}^{N} \int_{q_{i}-\ep}^{q_{i}} \left|\dfrac{\partial_x u^\ep}{1-\rho^\ep} - w^\ep\right| dx dt \\
& \leq 2 \int_0^T \ep \sum_{i} |w_{i+1} - w_i| \\
& \leq 4 \int_0^T \ep \sum_{i} |w_i|.
\end{align*}
Now, since the $d_i$'s remain of order $\ep$ (cf. proposition~\ref{prop:global-nse}), we have
\[
\ep |w_i| \leq C \sqrt{\ep} \dfrac{u_i - u_{i-1}}{\sqrt{d_i}},
\]
and therefore we get by the discrete energy estimate~\eqref{energy-NSE} and a Cauchy-Schwarz inequality
\[
\int_0^T \ep \sum_{i} |w_i| 
\leq C \sqrt{\ep} (\mathcal{E}_0 + \|f\|_{L^2_t L^1_x}D_N(0))^{1/2}.
\]
Hence $\dfrac{\partial_x u^\ep}{1-\rho^\ep}-w_\ep$ converges to $0$ in $L^1_{t,x}$. 
Moreover, $\partial_x u^\ep$ converges weakly in $L^2_{t,x}$ to $\partial_x u$, $\rho^\ep$ converges strongly (in $L^2_{t,x}$) to $\rho <1$ and $w_\ep$ converges weakly in $L^2$ to $w$, so that $\dfrac{\partial_x u^\ep}{1-\rho^\ep}-w_\ep$ converges to $\dfrac{\partial_x u}{1-\rho}-w$ in the sense of distribution. By uniqueness of the limit , we conclude that $w = \dfrac{\partial_x u}{1-\rho}$ a.e.

\end{proof}
 Lastly, we show the convergence of the forcing term.
 \begin{proposition} \label{prop:feps-nse}
 We have the following convergence:
 $\sum_{i=1}^{N-1} \bar f_{i} \mathbf{1}_{P_i(t)} \rightharpoonup^{\star} \rho f
 \text{ in } L^{\infty}((0,T) \times I)$.
 \end{proposition}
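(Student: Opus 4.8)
The plan is to compare the forcing sum $f^\epsilon := \sum_{i=1}^{N-1}\bar f_i \mathbf{1}_{P_i(t)}$ with the volume fraction $\chi^\epsilon$ of \eqref{def-vol} multiplied by the (fixed) function $f$, and then to invoke the already-established convergence $\chi^\epsilon \rightharpoonup^\star \rho$ from Proposition~\ref{prop:chi-NSE}.

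First I would record a uniform bound. The intervals $P_i(t)=[q_i(t)-\epsilon,q_i(t)+\epsilon]$, $i=1,\dots,N-1$, have pairwise disjoint interiors, since $q_i(t)-q_{i-1}(t)=d_i(t)+2\epsilon\ge 2\epsilon$; and $|\bar f_i(t)|\le \|f(t,\cdot)\|_{L^\infty(I)}$ because $\bar f_i(t)$ is an average of $f(t,\cdot)$ over $P_i(t)$. Hence $\|f^\epsilon\|_{L^\infty((0,T)\times I)}\le \|f\|_{L^\infty((0,T)\times I)}$, so up to a subsequence $f^\epsilon \rightharpoonup^\star g$ in $L^\infty$, and it remains only to identify $g=\rho f$.

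Second, using that $f$ is Lipschitz in $x$ uniformly in $t$, with constant $L_f$, for $x\in P_i(t)$ one has
\[
|f^\epsilon(t,x) - f(t,x)| = \left|\frac{1}{2\epsilon}\int_{P_i(t)}\bigl(f(t,y)-f(t,x)\bigr)\,dy\right| \le 2L_f\,\epsilon,
\]
since $|y-x|\le 2\epsilon$ for $x,y\in P_i(t)$. Writing $\tilde\chi^\epsilon(t,x):=\sum_{i=1}^{N-1}\mathbf{1}_{P_i(t)}(x)$, this gives $\|f^\epsilon - \tilde\chi^\epsilon f\|_{L^\infty((0,T)\times I)} \le 2L_f\epsilon \to 0$. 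Moreover $\tilde\chi^\epsilon$ and $\chi^\epsilon$ differ only on $P_0(t)\cup P_N(t)$, which has measure at most $2\epsilon$; hence $\|(\chi^\epsilon - \tilde\chi^\epsilon)f\|_{L^1((0,T)\times I)} \le 2\epsilon\,T\,\|f\|_{L^\infty_{t,x}}\to 0$, and since $(\chi^\epsilon-\tilde\chi^\epsilon)f$ is bounded in $L^\infty$, an elementary splitting of a test function $\phi\in L^1$ into $\phi\mathbf{1}_{\{|\phi|\le M\}}$ and $\phi\mathbf{1}_{\{|\phi|> M\}}$ upgrades this to $(\chi^\epsilon-\tilde\chi^\epsilon)f\rightharpoonup^\star 0$.

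Third, from Proposition~\ref{prop:chi-NSE} we have $\chi^\epsilon \rightharpoonup^\star \rho$ in $L^\infty((0,T)\times I)$; since $f\in L^\infty$ is fixed, for any $\phi\in L^1((0,T)\times I)$ one has $f\phi\in L^1$, whence $\iint \chi^\epsilon f\phi = \iint\chi^\epsilon (f\phi)\to\iint \rho f\phi$, i.e. $\chi^\epsilon f\rightharpoonup^\star\rho f$. Combining the decomposition $f^\epsilon = (f^\epsilon-\tilde\chi^\epsilon f) + (\tilde\chi^\epsilon-\chi^\epsilon)f + \chi^\epsilon f$ with the three facts above yields $f^\epsilon\rightharpoonup^\star\rho f$, and since the limit is independent of the extracted subsequence the full sequence converges. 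There is no genuine obstacle here; the only points that deserve a little care are the treatment of the boundary intervals $P_0(t),P_N(t)$ (which do not appear in the sum defining $f^\epsilon$ but do in $\chi^\epsilon$) and the routine upgrade of $L^1$-convergence of an $L^\infty$-bounded sequence to weak-$\star$ convergence in $L^\infty$.
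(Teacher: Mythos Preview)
Your proof is correct and follows essentially the same route as the paper: decompose $f^\epsilon-\rho f$ through $\chi^\epsilon f$, use the Lipschitz regularity of $f$ to kill the averaging error in $L^\infty$, and then invoke $\chi^\epsilon\rightharpoonup^\star\rho$. Your version is in fact slightly more careful than the paper's, since you explicitly separate out the boundary intervals $P_0(t),P_N(t)$ via $\tilde\chi^\epsilon$ and justify why their contribution vanishes in the weak-$\star$ limit, whereas the paper silently absorbs them into the sum.
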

\begin{proof}
Let $f^\ep := \sum_{i=1}^{N-1} \bar f_{i} \mathbf{1}_{P_i(t)}.$ We start with the decomposition 
\begin{equation} \label{f-decomp}
    f^\epsilon - \rho f = (f^\epsilon - \chi^\ep f) + (\chi^\ep f - \rho f).
\end{equation}
For the first term, we use the definition of $\chi^\ep$ from \eqref{def-vol} to write
\begin{equation}
    \begin{aligned}
        (f^\epsilon - \chi^\ep f)(t,x) &= \sum_{i=0}^N \mathbf{1}_{P_i(t)}(x) \left[ \frac{1}{2\ep} \int_{P_i(t)} f(t,y)~dy - f(t,x) \right]
 \\[1ex] &= \sum_{i=0}^N \mathbf{1}_{P_i(t)}(x) \left[ \frac{1}{2\ep} \int_{P_i(t)}( f(t,y) - f(t,x))~dy \right].     \end{aligned}
\end{equation}
Using $f \in W^{1,\infty}((0,T)\times I)$, we have 
\begin{equation}
      |(f^\epsilon - \chi^\ep f)(t,x)|\le 2\ep \sum_{i=0}^N \mathbf{1}_{P_i(t)}(x).
\end{equation}
Therefore, we have $(f^\epsilon - \chi^\ep f) \to 0$ strongly in $L^\infty_{t,x}$. For the second term of \eqref{f-decomp}, Proposition \ref{prop:chi-NSE} tells us that $\chi^\ep \rightharpoonup^\star \rho$ weakly-* in $L^\infty_{t,x}$. This concludes the proof. 
\end{proof}

\subsubsection{Limit passage in the weak formulation}
We have shown the following convergences for the density $\rho^\epsilon$, the critical density $\rho^{\star,\epsilon}$ and velocity $u^\epsilon$:
 \begin{align*}
    &\rho^{\epsilon} \to \rho \text{ strongly in } C([0,T]; L^{p}(I)), \hspace{0.1cm} &&\text{ (Prop. \ref{prop:rho-strong-NSE})} \\[1ex]
    &\rho^{\star,\epsilon} \to \rho^\star \text{ strongly in } C([0,T]; L^{p}(I)), \hspace{.1cm} &&\text{ (Prop. \ref{prop:rho-strong-NSE})} \\[1ex]
    &u^{\epsilon} \rightharpoonup   u \text{ weakly in } L^2(0,T; H^{1}_0(I)). \hspace{.1cm} &&\text{ (Cor. \ref{u-bounds-NSE})}
\end{align*}
In Proposition \ref{prop:NSE-eqn-eps-copy}, we showed that the continuity equation \eqref{NSE-eps-cons} and the transport equation \eqref{NSE-eps-transport} hold in the sense of distributions for any fixed $\epsilon$. More precisely, for any $\epsilon > 0$ the weak formulations \eqref{WFcontinuity}-\eqref{WFtransport} hold.

Using the above convergences as well as the convergences \eqref{nse-conv-assumption} and \eqref{u0-NSE} for the initial data $(\rho_0^\epsilon, u_0^\epsilon, \rho_0^{\star,\epsilon})$, we can pass to the limit in each term. The strong convergence of the densities is used to do so in the initial/terminal conditions that appear in the continuity/transport equations. This allows us to conclude that $(\rho,\rho^\star)$ solve the continuity and transport equations respectively with velocity $u$, in the sense of \eqref{WFcontinuity} and \eqref{WFtransport}. 

We now look at the weak formulation of the momentum equation \eqref{NSE-eps-mom}. For any $\epsilon > 0$ and $\phi \in C^1_c([0,T)\times I)$, we have
\begin{equation} \begin{aligned} \label{wf-nse}
    &\int_{0}^{T} \!\!\!\int_{I} \chi^{\epsilon}v^{\epsilon} \partial_t\phi(t,x)\,dx\,dt+\int_{I}\chi^{\epsilon}v^{\epsilon}\phi(0,x) \,dx \\[1ex]  &+\int_0^T \!\!\!\int_I \chi^{\epsilon}(v^{\epsilon})^{2}\phi(
    t,x)\,dx\,dt -\int_0^T \!\!\!\int_I w^{\epsilon}\partial_{x}\phi(s,x)\,dx\,dt +\int_0^T \!\!\!\int_I G^{\epsilon}\partial_{x}\phi(t,x)\,dx\,dt\\[1ex] &= -\int_{0}^{T}\!\!\! \int_{I} \sum_{i=1}^{N-1} \overline{f_i} \mathbf{1}_{P_i(t)}\phi(t,x)\,dx\,dt.
    \end{aligned}
\end{equation}
We have previously shown that \begin{align*}
    &\chi^{\epsilon} \rightharpoonup^\star \rho \text{ weakly-* in } L^{\infty}(0,T; L^{\infty}(I)), \hspace{.1cm} &&\text{ (Prop \ref{prop:chi-NSE}) } \\[1ex]
    &\chi^{\epsilon}v^{\epsilon} \to \rho u \text{ weakly in } L^{2}(0,T; L^{\infty}(I)), \hspace{.1cm} &&\text{ (Prop \ref{prop:chiv-conv}) } \\[1ex]
    &\chi^{\epsilon}(v^{\epsilon})^{2} \to \rho u^2 \text{ weakly in } L^{2}(0,T; L^{\infty}(I)).  \hspace{.1cm} &&\text{ (Prop \ref{prop:chiv-conv}) }
    \end{align*}
This allows us to pass to the limit in the first and third terms of \eqref{wf-nse}. For the remaining terms, we recall the weak convergence of $w^\epsilon$ from \eqref{weps-conv-NSE}, the $L^1_{t,x}$ convergence of $G^\epsilon$ from Lemma \ref{lemma-Geps-NSE} and the convergence of the forcing term from Proposition \ref{prop:feps-nse}. Additionally, we have the convergence of the initial data \eqref{nse-conv-assumption} and \eqref{u0-NSE}. We can also deduce the convergence $v^\epsilon(0,\cdot) \to u_0$ in $C(I)$ from \eqref{veps-ID-bound}. 
This is enough to pass to the limit in \eqref{wf-nse}. 

To conclude the proof, let us verify that $\rho, \rho^\star$ and $\rho u$ belong to the correct functional spaces. The strong convergences (and boundedness) of $\rho^\epsilon$ and $\rho^{\star,\epsilon}$ from Proposition \ref{prop:rho-strong-NSE} imply that $\rho, \rho^{\star}  \in C([0,T]; L^\infty(I))$.

Finally, let us prove that $\rho u \in C_{weak}([0,T); L^2(I)).$ Take $\phi \in H^1(I)$ and let $F_\ep(t) := \int_I \chi^\ep v^\ep(t,x) \phi(x)\,dx$. Note that $\|F_\ep\|_{L^\infty(0,T)} \le C$ uniformly. Using the momentum equation \eqref{NSE-eps-mom},
\begin{equation}
    \begin{aligned}
        \frac{d}{dt} F_\ep(t) =  \int_I \left[ \chi (v^\ep)^2 - w^\ep + G^\ep \right] \phi'\,dx +   \int_I f^\ep \phi ~ dx 
    \end{aligned}
\end{equation} is uniformly bounded in $L^2(0,T)$, since $\chi^\ep, v^\ep, G^\ep$ are uniformly bounded and $\|w^\ep \|_{L^2_{t,x}} \le C$. Therefore $\|F_\ep\|_{H^1(0,T)} \le C$ uniformly and so there exists $F \in H^1(0,T)$ such that up to a subsequence, $F^\ep \to F$ in $C([0,T])$. On the other hand, we have $F^\ep \rightharpoonup \int_I \rho u \phi\,dx$ in $L^2(0,T)$ since $\chi^\ep v^\ep \rightharpoonup \rho u $ in $L^2_t L^\infty_x$ (Proposition \ref{prop:chi-NSE}). Therefore, we have
\begin{equation}
F(t) = \int_I \rho u(t,x) \phi(x)\,dx \in H^1(0,T) \subset C([0,T]),
\end{equation}
which gives \begin{equation}
        \rho u \in C_{weak}([0,T); L^2(I)).
    \end{equation}
This completes the proof of Theorem \ref{thm:nse}.

\section{Numerical illustrations} 
\label{sec:numerics}
The goal of this section is to provide a series of numerical illustrations to help us visualise the behaviour of solutions to the limiting system 
 \begin{equation}
\label{sim}
    \left\{
    \begin{aligned}
        &\partial_{t} \rho + \partial_{x}(\rho u ) = 0,  \\[1ex]
       &\partial_t(\rho u) + \partial_x (\rho u^2) - \partial_{x} \left( \frac{\mu}{1- \rho} \partial_{x} u \right) 
       + \partial_{x} \left( \frac{\rho}{\rho^{\star}} \right)^{\gamma} =  \rho f, \\[1ex] &\partial_{t} \rho^{\star} + u \partial_{x} \rho^{\star} = 0, 
    \end{aligned}
    \right.
\end{equation} 
where $\mu >0, \gamma \ge 1, f$ is a given source. 
We impose the no-slip boundary conditions for $u$, i.e $u=0$ for $x=0,1$. This system can be interpreted as a compressible Navier-Stokes system with a singular viscosity and a pressure that becomes stiff in the limit $\gamma \to \infty$. 
The numerical discretization of these equations is challenging due to the singular behaviour of the viscosity as $\rho$ approaches its maximal value $\rho = 1$. 
On the one hand, the blow-up of the viscosity induces severe stability constraints, leading to a restrictive diffusive CFL condition. On the other hand, numerical errors may cause the density to exceed the physical bound $\rho = 1$, which must be carefully prevented at the discrete level.
In~\cite{HCL}, the diffusion term was treated implicitly in time and the system was reformulated in terms of an effective velocity. In the present work, we propose to
carry out numerical experiments using neural networks, more specifically ``Physics-Informed Neural Networks'' (PINNs). The experiments in this section are not intended to provide an optimal solver for the limit system, but rather to illustrate its qualitative properties and to demonstrate a solver that can be naturally extended past 1D. In the remainder of this section we include only the final results, while the details of the implementation are postponed to Appendix~\ref{sec:AppB}.

We consider three cases of initial data. We take $\gamma=1$ (except for the final experiment which compares values of $\gamma$) and $f=0$ in each of our experiments.
\subsection{Case 1: constant, equal initial densities.}
We take equal, constant initial densities $\rho_0,\rho_0^\star$ and a compressive velocity: \begin{equation}
    (\rho_0, u_0, \rho_0^\star) = (0.7, 0.5\sin(2\pi x), 0.7).
\end{equation}
The compressive velocity causes the density to grow, while the singular viscosity -- arising due to the lubrication force at the microscopic level -- imposes $\rho<1$. At the same time, the pressure term $\partial_x\left( \rho/ \rho^\star \right)^\gamma$  -- arising due to the repulsive force at the microscopic level --  becomes stronger because $\rho$ becomes much greater than $\rho^\star$. Eventually, this forces the density to decrease (see $t=0.2$ to $t=0.3$ in Figure \ref{fig:case1}). Note that $\rho^\star(t,\cdot)=\rho_0^\star<1$ for positive times since it is transported. Thus the key effect of the pressure in this example is that it restricts the growth of the density. To see this more clearly, we demonstrate later on in Figure \ref{fig:CNS} the same case when the pressure term (and $\rho^\star$) are removed. In the long-time regime (see the last column of Figure \ref{fig:case1}), the density returns to the steady state $\rho=\rho^\star=0.7$. 
\begin{figure}[H]
    \centering
    \includegraphics[width=0.9\linewidth]{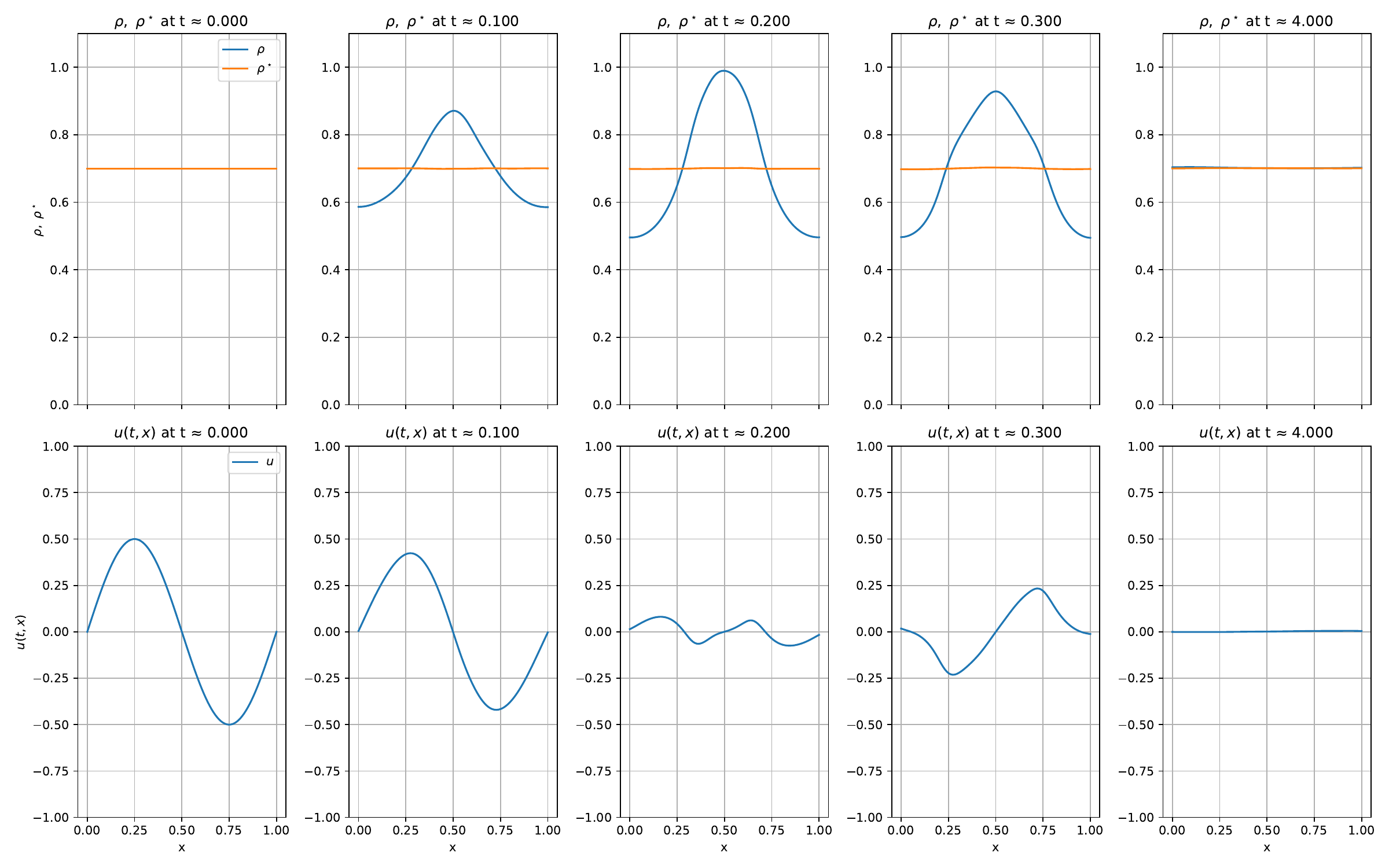}
    \caption{Case 1: the blue curve represents $\rho$ and the orange curve represents $\rho^\star$.}
    \label{fig:case1}
\end{figure}

\subsection{Case 2: congestion formation}

\subsubsection*{Case 2a: without pressure}
We first carry out an experiment for our system without the presence of $\rho^\star$ and its transport equation. In this case, the system is reduced to (setting the forcing term to $0$):
 \begin{equation}
    \left\{
    \begin{aligned}
       &\partial_t\rho + \partial_x(\rho u) = 0, \\[1ex]
&\partial_t(\rho u) + \partial_x( \rho u^2) - \partial_x \left(\frac{1}{1-\rho} \partial_x u \right) = 0.
    \end{aligned}
    \right.
\end{equation} 
This is a compressible pressureless Navier-Stokes system, which is equivalent to the dissipative Aw-Rascle system studied in \cite{HCL}. There, the authors  carried out numerical simulations for the system for initial data 
\begin{equation}
    (\rho_0, u_0) = (0.7, 0.5\sin(2\pi x)).
\end{equation}
Again, the singular viscosity prevents the density from reaching 1. 
Compared to Case 1, since the pressure term, which plays a repulsive role, has been removed, we expect high-density regions to form more easily and to persist over long times.
In order to validate our neural network solver and to additionally understand the effects of the pressure term in more detail, we display the solution obtained by our PINN for this case of initial data in Figure \ref{fig:CNS}.
\begin{figure}[H]
    \centering
    \includegraphics[width=0.9\linewidth]{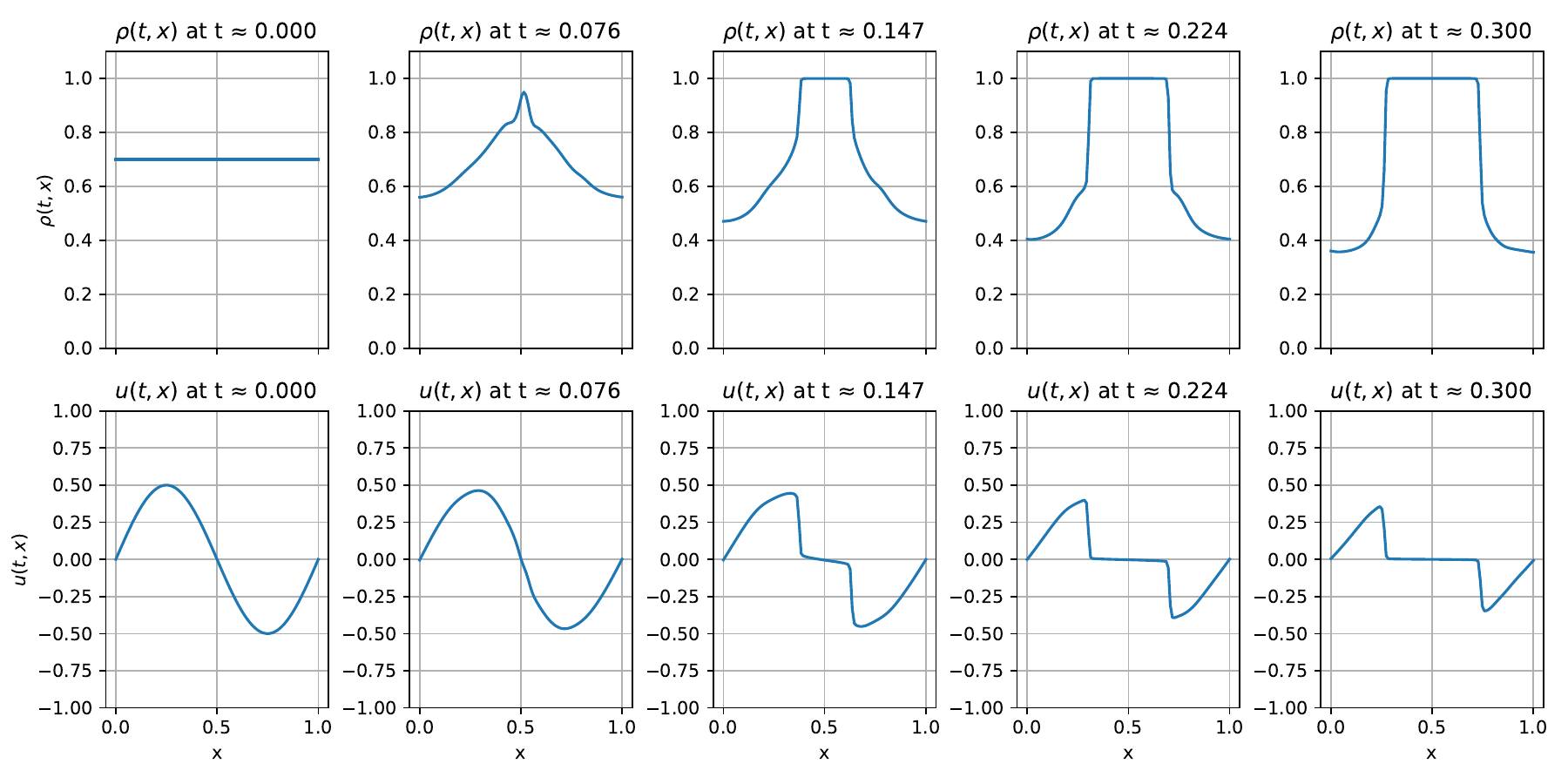}
    \caption{Case 2a: Congestion formation in the macroscopic system without the pressure.}
    \label{fig:CNS}
\end{figure}
As expected, we see in Figure~\ref{fig:CNS} that once a region becomes congested (i.e. $\rho$ close to 1), it remains congested thereafter. 
Note that the results line up with the simulations seen in \cite{HCL}, where a more traditional finite-difference numerical scheme was used.
\subsubsection*{Case 2b: with pressure}
We now increase the initial value of $\rho^\star_0$ to $1.0$, in order to highlight more clearly the effect of the pressure term $\partial_x (\rho /\rho^\star)^\gamma$ in system \eqref{sim}. We take the initial data
\begin{equation}
    (\rho_0, u_0, \rho_0^\star) = (0.7, 0.5\sin(2\pi x), 1.0).
\end{equation}
The results can be seen in Figure \ref{fig:case2}.  In this case, the pressure term is smaller than in Case 1, which results in the formation of a congestion state, as observed at $t=0.2$ in Figure~\ref{fig:case2}. This congestion is due to the lubrication force, which prevents the density from reaching 1. Unlike Case 2a, due to the pressure term, the density subsequently decreases for $t>0.2$. However, this decrease occurs more slowly than in Case 1, as the ratio $\rho / \rho^\star$ is smaller.
\begin{figure}[H]
    \centering
    \includegraphics[width=0.9\linewidth]{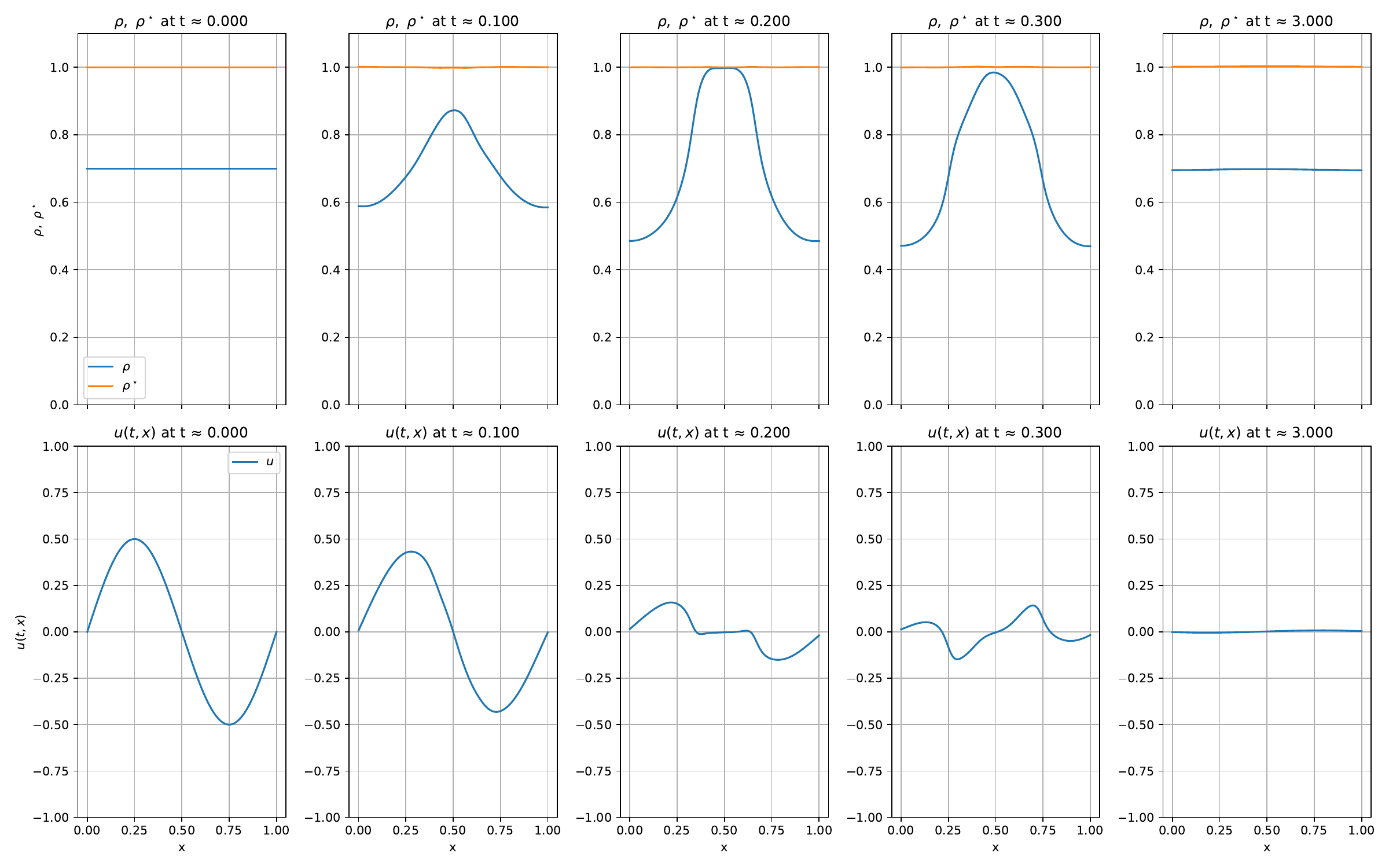}
    \caption{Case 2b: with pressure.}
    \label{fig:case2}
\end{figure}

\subsection{\texorpdfstring{Case 3: non-constant constraint $\rho^\star$}{Case 3: non-constant constraint rho*}}
We now consider initial data where the densities are not constant. More specifically, we take the initial densities to be Gaussian pulses such that $\rho_0$ is above $\rho_0^\star$ in some region of the domain, and $u_0$ is again compressive:
\begin{equation}
     (\rho_{0}, u_{0}, \rho_{0}^\star) = \left(0.6+0.2 \exp \left( \frac{(x-0.5)^2}{2(0.1)^2}\right), 0.5\sin(2\pi x), 0.6-0.2 \exp \left( \frac{(x-0.5)^2}{2(0.1)^2}\right)\right).
\end{equation}
We see that the density is divided into two peaks which travel apart due to the velocity. An interesting observation here is that in the long-time regime, we obtain states where $\rho > \rho^\star$, for which the lubrication and pressure effects compensate.
\begin{figure}[H]
    \centering
    \includegraphics[width=\linewidth]{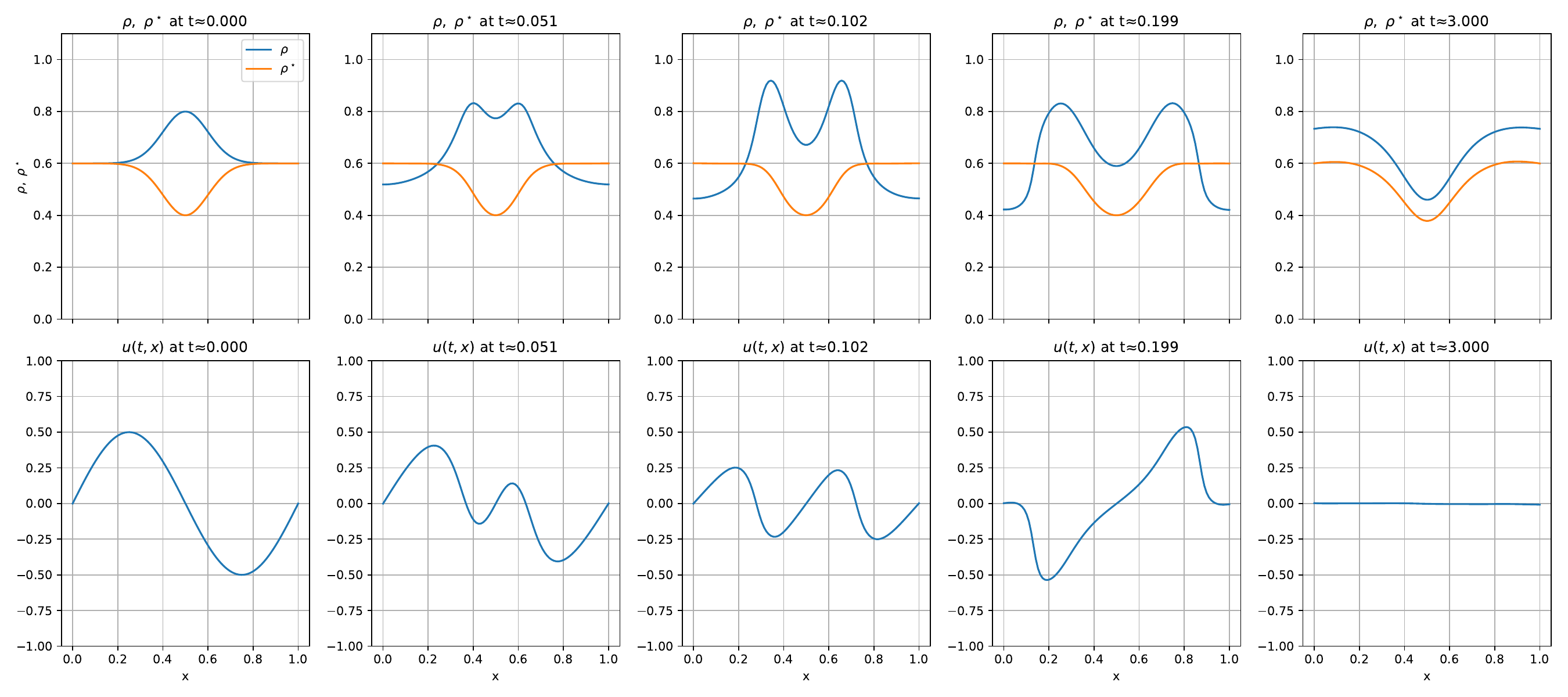}
    \caption{Case 3: the blue curve represents $\rho$ and the orange curve represents $\rho^\star$.}
    \label{fig:case3}
\end{figure}

\subsection{\texorpdfstring{Effects of variable $\gamma$}{Effects of  gamma}}

Finally, we carry out a small experiment using the initial data of Case 1 with varying values of $\gamma=2,5,10$ to better understand its effects on the dynamics. We see in Figure \ref{fig:case1-gamma} that the bigger $\gamma$ is, the stronger the force imposing the expected maximal density becomes. The maximum value of $\rho$ decreases as $\gamma$ increases. This is due to the fact that the penalisation of $\rho > \rho^\star$ becomes stronger as $\gamma$ increases, since the pressure term is exponential in $\gamma$. At the limit $\gamma \to \infty$, we expect to converge to a hard constraint $\rho \le \rho^\star$. 

\begin{figure}[H]
    \centering
    \includegraphics[width=1\linewidth]{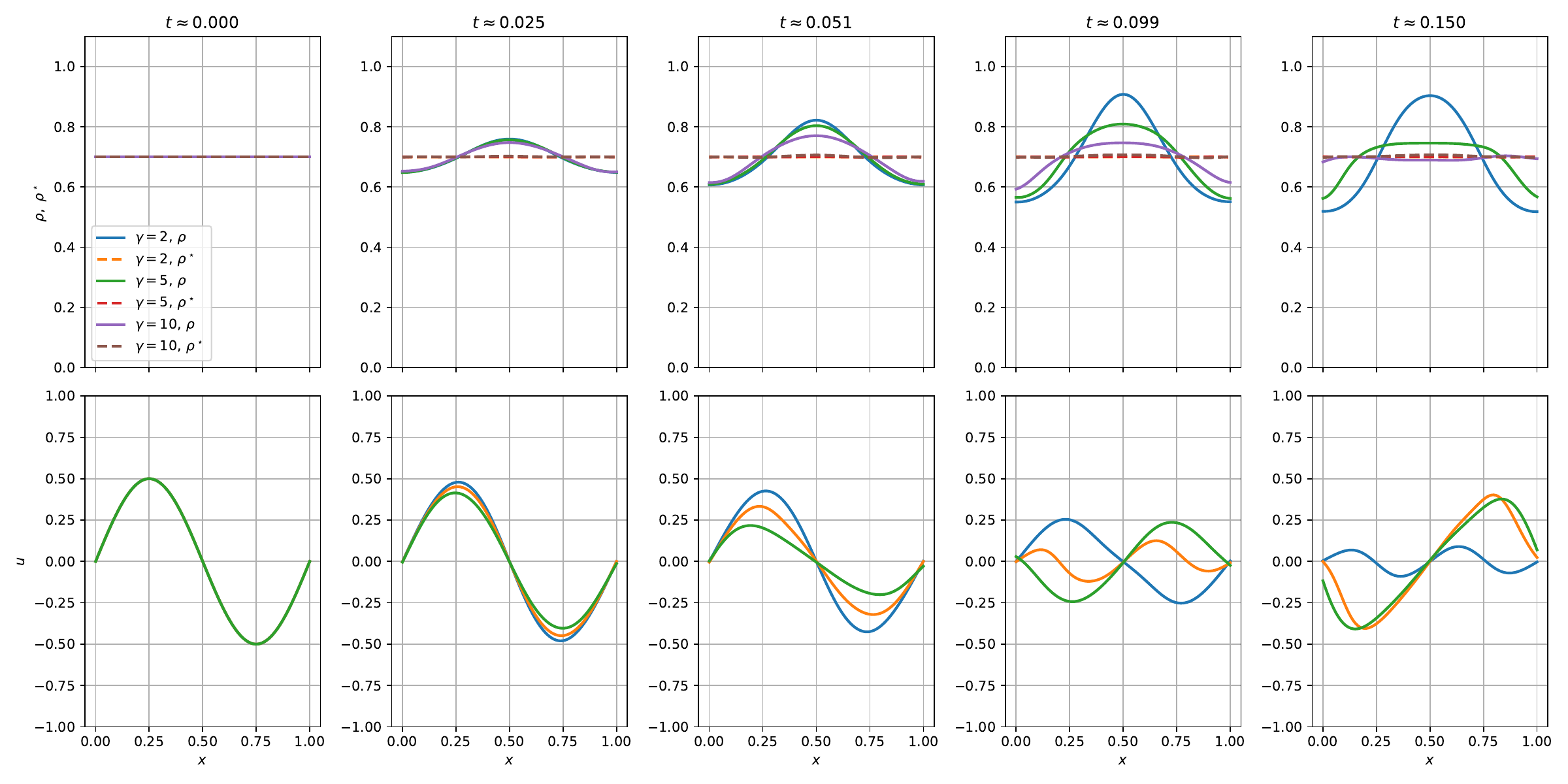}
    \caption{The effect of varying $\gamma$ for the data of Case 1. For the velocity, the blue curve corresponds to $\gamma=2$, the orange curve to $\gamma=5$ and the green curve to $\gamma=10$.}
    \label{fig:case1-gamma}
\end{figure}

\begin{appendices}	
    \section{Extension to a congested model} \label{sec:extension} 
In this section we explain how Theorem \ref{thm:nse} can be extended to the case where the limit density may reach $1$, i.e. $\overline{\rho_0} = 1$ on a subset of $[0,1]$. At time $t=0$ we now allow for a configuration $(\mathbf{q}_{0}^{\epsilon}, \mathbf{u}_0^\epsilon, \mathbf{d}^{\star, \epsilon})_{\epsilon}$ where two or more particles may be in contact creating a cluster. By a cluster we understand cluster to any group of particles that are stuck together at initial time. If a cluster exists  initially then the particles that created it move stuck together with constant velocity for all positive times. Therefore, we may essentially treat a cluster  of $k-$particles as a single particle of radius $k\epsilon$. A possible configuration is depicted a the Figure \ref{fig:u-cluster}.

 \begin{figure}[H]
	\centering
	\includegraphics[scale= 1]{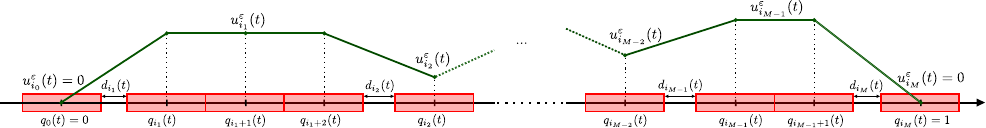}
	\caption{A particle configuration with clusters}
	\label{fig:u-cluster}
\end{figure}
 Following \cite{lefebvre2008micro}, to describe the balance of forces in this situation, we introduce some new notation. 
 
 For any $\epsilon$ we denote $M^{\epsilon} +1$ to be the number of clusters (note that a single particle not in contact with any others is also a cluster), where $M^\epsilon \le N^\epsilon$ (equality holds when no two particles are in contact). We denote by $i_{1}, ..., i_{M^{\epsilon}}$ the first particle in each cluster. We additionally let $n_{k} +1 \ge 1$ be the number of particles in cluster $i_{k}$, so that cluster $k$ consists of the particles $i_{k}, i_{k}+1, ..., i_{k}+ n_{k}$. The distance between clusters $k-1$ and $k$ is $d_{i_{k}}$, so the balance of forces for cluster $k$ is given by
    \begin{equation} \label{bof-clusters-ext}
       2n_k\epsilon \ddot{q}_{i_k} = \frac{u_{i_{k+1}} - u_{i_{k}+n_{k}}}{d_{i_{k+1}}}   - \frac{u_{i_{k}} - u_{i_{k-1}}}{d_{i_{k}}} + (G_{i_{k}} - G_{i_{k+1}})    + 2 n_k \epsilon\bar f_{i_{k}},
    \end{equation}
where \begin{equation}
    u_{i_{k}} = u_{i_{k}+1} = ... = u_{i_{k}+n_k}, 
\end{equation} and \begin{equation}
    \bar{ f }_{i_{k}}  = \frac{1}{2n_k\epsilon} \int_{q_{i_{k}} - \epsilon}^{q_{i_{k}+n_k}+ \epsilon} f(s,x)\,dx.
\end{equation}

 With this analogue of the balance of forces \eqref{balance-intro}, we may basically repeat the construction of the approximate initial data, proof of local and global existence of solutions from Section \ref{sec:global-NSE}, as well as the uniform estimates and passage to the limit in the PDE formulation from Section \ref{sec:limit} with three minor changes.
    \begin{itemize} 
        \item The distance $d_i$ between particles $q_{i-1}$ and $q_i$ is replaced by $d_{i_k}$, the distance between cluster $i_k-1$ and cluster $i_k$. However, because the clusters are neither created nor destroyed in time, the estimates for the lower and upper bounds of $d_{i_k}$ follow the same way as in Proposition \ref{prop:global-nse}.
        \item Since the limit density $\rho$ might now be equal to $1$, identification of the nonlinear viscosity term $w=\frac{\partial_x u}{1-\rho}$, needs to be revised. Indeed, because division by $(1-\rho)$ is not allowed, one first needs to identify $\partial_x u$ with $(1-\rho)w $. This is slightly more subtle as $w^\ep$ and $\rho^\ep$ are defined on ``different grids''. Similar identification has been performed in  Lemma 4.9 in \cite{lefebvre2008micro}.
    \item Finally, the weak formulation of the momentum equation from Definition \ref{def:nse-soln} needs to be re-interpreted since the function $1/ (1-\rho^{\epsilon})$ is no longer well-defined in the pointwise sense.  To this end, we define the space
\begin{equation} \label{def:Hw-space}
    H_{\omega} := \left\{  v \in H^{1}_{0}(I) : \partial_{x}v = 0 \text{ in } D(\omega)^{c} \text{ and } \int_{D(\omega)}\omega(x)|\partial_{x}v|^{2} < +\infty        \right\},
\end{equation} 
where $D(\omega) = \{ x \in I : \omega(x) < +\infty \}$. 

We can now define solutions as a triple of functions $(\rho,u,\rho^*)$ such that $\rho,\rho^*\in (0,1]$ and $u\in L^2(0,T; H_{\frac{1}{1-\rho}}(I))$ such that the momentum equation:
 \begin{equation} \label{WFmom-ext} \begin{aligned}
    &\int_0^T \!\!\!\int_I \rho  u \partial_t \phi(t,x)\,dx\,dt  + \int_I  \rho_0 u_0(x)\phi(0,x)\,dx + \int_0^T \!\!\!\int_I \rho u^2 \partial_x \phi (t,x)\,dx\,dt  \\[1ex] 
    &-  \int_0^T\!\!\!\int_{I} \frac{\mu}{1-\rho} \partial_{x}u ~\partial_{x}\phi (t,x)\,dx\,dt + \int_0^T\!\!\!\int_{I} \left(\frac{\rho}{\rho^{\star}}  \right)^{\gamma} \partial_{x}\phi (t,x)\,dx\,dt = -\int_0^T\!\!\!\int_{I}\rho f \phi (t,x)\,dx\,dt, \end{aligned}
    \end{equation}
 holds for all $t \in (0,T)$ and $\phi \in C^1_c([0,T); H_{\frac{1}{1-\rho}})$; all the rest of Definition \ref{def:nse-soln}
 remains unchanged.        
    \end{itemize}

 \begin{remark} The requirement that $u, \phi \in H_{\frac{1}{1-\rho}}$ in space ensures that the viscous term of \eqref{WFmom-ext} is bounded. Indeed, using Young's inequality,
\begin{equation*}
    \left|  \int_{I} \frac{\mu}{1-\rho} \partial_{x}u ~\partial_{x}\phi\,dx  \right| \le \frac{\mu}{2} \int_{I} \frac{|\partial_{x}u|^{2}}{1-\rho} \,dx + \frac{\mu}{2} \int_{I} \frac{|\partial_{x}\phi|^{2}}{1-\rho} \,dx,
\end{equation*} 
which is bounded since in space we have $u, \phi \in H_{\frac{1}{1-\rho}}$.
\end{remark}

\section{Details of numerical illustrations}
\label{sec:AppB}
In this section we give an overview of Physics-Informed Neural Networks and provide full details of the neural network solver used to generate the simulations in Section \ref{sec:numerics}.

\subsection{Physics-Informed Neural Networks} 
Deep learning methods represent a new approach for the numerical analysis of PDEs. The most popular paradigm in this area is that of ‘Physics-Informed Neural Networks' (PINNs), which was introduced by Raissi et al in \cite{raissi2019physics}, as a mesh-free solver for PDEs. Physics-Informed Neural Networks may be defined as fully-connected neural networks with a physics-informed loss function. 
\subsubsection*{Fully-connected neural networks}
 A fully-connected neural network is a function
whose output is computed as a composition of affine layers and non-linear activation functions. We now give a precise definition of a fully-connected neural network.
Let $\mathbf{x} \in \mathbb{R}^d$ be the input, $g^{(0)}(\bf{x}) = \bf{x}$ and $d_0:=d$. Suppose $\mathbf{v}_\theta$ is a neural network with $L$ layers and widths $(d_1, ..., d_L)$. We define $\mathbf{v}_\theta$ recursively using each layer $l$ of the network. The output of layer $l$ is given by
\begin{equation}
    \mathbf{v}_\theta^{(l)}(\mathbf{x}) = \mathbf{W}^{(l)} \cdot \mathbf{g}^{(l-1)}(\mathbf{x})+\mathbf{b}^{(l)}, ~~\mathbf{g}^{(l)}(\mathbf{x})= \sigma ( \mathbf{v}_\theta^{(l)}(\mathbf{x})), ~~ l =1,2,...,L.
\end{equation}
Then the neural network $\mathbf{v}_\theta$ is defined as the output of the final layer: 
\begin{equation}
    \mathbf{v}_\theta(\mathbf{x}) = \mathbf{W}^{(L+1)} \cdot \mathbf{g}^{(L)}(\mathbf{x}) + \mathbf{b}^{(L+1)},
\end{equation}
where $\mathbf{W}^{(l)} \in \mathbb{R}^{d_{l} \times d_{l -1} }, \mathbf{b}^{(l)} \in \mathbb{R}^{d_l}$ are the weight matrix / bias vector respectively in the $l$-th layer, and $\sigma$ is an activation function. The full set of trainable parameters is given by \begin{equation*}
    \theta = (\mathbf{W}^{(1)}, \mathbf{b}^{(1)}, ..., \mathbf{W}^{(L+1)}, \mathbf{b}^{(L+1)}).
\end{equation*}
When defining a neural network in practice, we have control over the depth of the network ($L$ above), the width of the network ($d_l$) and the activation function $\sigma$ (which could be chosen independently in each layer). 
\subsubsection*{Training PINNs}
    When defining a neural network in practice, the parameters $\theta$ need to be initialised. A default approach is to randomly sample the weights and biases from a fixed distribution (usually uniform or Normal). For PINNs, more sophisticated methods such as Xavier / He initialisation \cite{glorot2010understanding, he2015delving} are commonly used in the literature.  

After initialisation, we are left with an arbitrary $\mathbf{v}_\theta$. In order for our neural network $\mathbf{v}_\theta$ to approximate a solution $v$ of a PDE $\mathcal{L}[v]=0$ well, the parameters $\theta$ must be chosen appropriately. With PINNs, this is done by defining a loss functional
\begin{equation}
    \mathbf{J}[\mathbf{v}_\theta] := \|\mathcal{L}[\mathbf{v}_\theta] \|_{L^2(\Omega)}^2 + \|\mathcal{B}[\mathbf{v}_\theta]\|^2_{L^2(\partial \Omega)},
\end{equation}
where the operator $\mathcal{B}$ encodes the boundary/initial conditions. In this sense, we say the neural network is ``physics-informed''. In practice, these norms are approximated using a finite set of collocation points. We randomly generate points $\{y_i\}_{i=1}^{M_p}$ in the interior domain $(0,T) \times I$, as well as points $\{ z_i\}_{i=1}^{M_b}$ on the slices $\{t=0\} \times I$ and $(0,T) \times \partial I$ on which the initial data / boundary conditions are defined, respectively.  Then the empirical loss functional is given by
\begin{equation}
    \mathcal{J}[\mathbf{v}_\theta] = \frac{1}{M_p} \sum_{i=1}^{M_p} |\mathcal{L}[\mathbf{v}_\theta](y_i)|^2 + \frac{1}{M_b} \sum_{i=1}^{M_b} |\mathcal{B}[\mathbf{v}_\theta](z_i)|^2.
\end{equation}
The training phase then proceeds as follows. With the initialised network, we perform a forward pass to evaluate $\mathbf{v}_\theta$ at the collocation points. The derivatives of the output $\mathbf{v}_\theta$ are computed  precisely using the automatic differentiation functionalities of the Tensorflow/PyTorch packages for Python. This is used to evaluate the loss function $\mathcal{J}[\mathbf{v}_\theta]$. 

We then update the parameters $\theta$ in an effort to minimise the loss $\mathcal{J}[\mathbf{v}_\theta]$. This is essentially a finite-dimensional non-linear optimisation problem:
\begin{equation}
    \theta^\ast \in \arg \min_{ \theta \in \mathbb{R}^p} \mathcal{J}[\mathbf{v}_\theta].
\end{equation}
We typically employ a gradient-based optimisation algorithm, such as stochastic gradient descent or a variant (e.g. Adams) to update the parameters. Stochastic gradient descent works by randomly sampling points in $[0,T] \times I$, computing the gradient of the loss with respect to $\theta$ at those points and using the iterative update rule:
\begin{equation}
    \theta_{t+1} = \theta_t - \eta \cdot \nabla_{\theta_t} \mathcal{J}[\mathbf{v}_\theta].
\end{equation}
Here, $\eta$ represents the learning rate which controls the step-size of each update. The gradient $\nabla_\theta \mathcal{J}[\mathbf{v}_\theta]$ is computed via backpropagation, which works by applying the chain rule sequentially layer-by-layer to find the gradient of the loss with respect to each of the trainable parameters. Once the parameters $\theta$ are updated,  one epoch (iteration) of the training cycle is complete. The training stage typically consists of thousands of epochs. The end result is a neural network $\mathbf{v}_\theta$ which approximates $\mathbf{v}$.

\subsection{Application of PINNs to the limit system}
We now describe how we use PINNs to carry out numerical experiments for the limit system. We define a neural network $\mathbf{v}_\theta : (t,x) \to (\rho_\theta(t,x), u_\theta(t,x), \rho^\star_\theta(t,x))$. The inputs are the time-space coordinates $\mathbf{x}:=(t,x) \in [0,T] \times [0,1]$, and so the input dimension of the network is $2$. The output dimension is $3$, corresponding to the solution $(\rho, u, \rho^\star)$. The neural network $\mathbf{v}_\theta$ we consider is chosen to be a fully-connected neural network with $L$ hidden layers of equal width $m$. This means that $\mathbf{v}_\theta$ has the general form
\begin{equation}
    \mathbf{v}_\theta(\mathbf{x}) = \mathbf{W}^{(L+1)} \cdot \mathbf{g}^{(L)}(\mathbf{x}) + \mathbf{b}^{(L+1)}, \qquad \mathbf{x} = (t,x)
\end{equation}
where for $\ell \in \{1,2,...,L\}$, 
\begin{equation}
    \mathbf{g}^{(\ell)}(\mathbf{x}) = \sigma \left( \mathbf{W}^{(\ell-1)}\mathbf{g}^{(\ell-1)}(\mathbf{x}) + \mathbf{b}^{(\ell-1)} \right), \qquad \mathbf{g}^0(\mathbf{x}) = \mathbf{x}.
\end{equation}
Here, $(\mathbf{W}^{(\ell)}, \mathbf{b}^{(\ell)})$ corresponds to the weight matrix / bias vector in the $\ell$'th layer; $\mathbf{W}^{(1)} \in \mathbb{R}^{m\times 2}, \mathbf{b}^{(1)} \in \mathbb{R}^{m \times 1}$, $\mathbf{W}^{(\ell)} \in \mathbb{R}^{m\times m}$ and $\mathbf{b}^{(\ell)} \in \mathbb{R}^{m\times1}$ for $\ell = 2,..., L$. For the output layer, $\mathbf{W}^{(L+1)} \in \mathbb{R}^{3 \times m}, \mathbf{b}^{(L+1)} \in \mathbb{R}^{3 \times 1}$.  Furthermore, $\sigma$ is a fixed non-linear activation function applied component-wise. Generally, common choices for activation functions include $\text{ReLU}(x) := \max(0,x)$, sigmoid and $\tanh$ among many others. For PINNs, $\tanh$ is often used in the literature. The effect of different types of activation functions on the performance of PINNs is an ongoing area of research (see \cite{maczuga2023influence, khademi2025physics, jagtap2020adaptive} for recent works in this area). The set of trainable parameters of the network is given by \[ \theta := (\mathbf{W}^{(1)}, \mathbf{b}^{(1)}, ..., \mathbf{W}^{(L+1)}, \mathbf{b}^{(L)}). \]  Initialising a neural network $\mathbf{v}_\theta$ can be viewed as defining a finite-dimensional trial space 
\begin{equation}
    \mathcal{V}_P = \{ (t,x) \mapsto \mathbf{v}_\theta(t,x) : \theta \in \mathbb{R}^P \},
\end{equation}
where $P$ is the total number of trainable parameters. The strategy of PINNs is to select $\theta$ so that the resulting function $\mathbf{v}_\theta$ approximately satisfies the governing PDE, initial and boundary conditions in a least-squares sense. This is the purpose of the training phase.

\subsubsection*{Network architecture}
We choose $L=4$ hidden layers of equal width $m=256$, and the activation function $\sigma(z) = \tanh(z)$. The parameters $\theta$ of the network are initialised using the Xavier (Glorot) initialisation scheme \cite{glorot2010understanding}. This scheme samples weights from a uniform distribution, which is scaled appropriately to ensure variance of activations/gradients are roughly constant across layers. This prevents vanishing/exploding gradient issues from arising during training, which can lead to unstable parameter updates during the optimisation stage.  

The output of our neural network is somewhat non-standard. We attach a final transformation onto the network output which ensures the initial data are always satisfied. The raw network output is denoted by $(d_\rho, d_u, d_{\rho^\star})$. This is combined with an extra transformation to produce the final output:
\begin{equation*}
    \begin{aligned}
        &\rho_\theta(t,x) = \rho_0(x) \exp\left(\frac{t}{T} d_\rho(t,x) \right), \\[1ex]
        &u_\theta(t,x) = u_0(x) + \frac{t}{T}d_u(t,x), \\[1ex]
        &\rho^\star_\theta(t,x) = \rho_0^\star(x) \exp(A(t) d_{\rho^\star}(t,x)).
    \end{aligned}
\end{equation*}
This choice is primarily made to hard-code the initial data into the network. Indeed, we have $(\rho_\theta, u_\theta, \rho^\star_\theta)(0,\cdot) = (\rho_0, u_0, \rho^\star_0)$ by construction. The use of the exponential in $\rho_\theta$ and $\rho_\theta^\star$ ensures the positivity of these functions, while it is omitted for $u$ since $u$ can be negative.

\subsubsection*{Physics-informed loss functional}
 After initialisation, the next step is to sample a set of collocation points, perform a forward pass and evaluate the loss function. We uniformly sample points $\{t_p^i, x_p^i\}_{i=1}^{M_{p}}$ in the interior domain $(0,T) \times I$ as well as $\{t_{bc}^i\}_{i=1}^{M_b}$ on the slice $(0,T) \times \{0,1\}$ on which the boundary conditions are defined. We choose $M_p=7500$ and $M_b=2000$. Once these points have been sampled, we evaluate the network at each of these points and compute the loss function. The loss function we use is given by
\begin{equation}
    \begin{aligned}
        \mathcal{J}[\mathbf{v}_\theta] :=  \mathcal{J}_{\text{PDE}}[\mathbf{v}_\theta] +  \mathcal{J}_{\text{BC}}[\mathbf{v}_\theta].
    \end{aligned}
\end{equation} The PDE residual loss is given by
\begin{equation}
    \begin{aligned}
        \mathcal{J}_{\text{PDE}}[\mathbf{v}_\theta] &:= \frac{\lambda_{P_1}}{M_p} \sum_{i=1}^{M_p} | \partial_t \rho_\theta + \partial_x(\rho_\theta u_\theta)|^2 (t_p^i, x_p^i) +  \frac{\lambda_{P_2}}{M_p} \sum_{i=1}^{M_p} | \partial_t \rho^\star_\theta + u_\theta \partial_x \rho^\star_\theta |^2 (t_p^i, x_p^i)\\[1ex]
 &+   \frac{\lambda_{P_3}}{M_p} \sum_{i=1}^{M_p} | \partial_t (\rho_\theta u_\theta) + \partial_x(\rho_\theta u_\theta^2) - \mu \partial_x ( \partial_x u_\theta / (1-\rho_\theta)) - \rho_\theta f|^2 (t_p^i, x_p^i),
 \end{aligned}
\end{equation} 
where the weights $\lambda_{P_{i}}$ are fixed scalars. We choose $\lambda_{P_1} = \lambda_{P_2} = 1$ and $\lambda_{P_3} = 2$. We find that, in practice, a higher weight on the transport equation for $\rho^\star$ helps the network to learn the correct dynamics. The boundary condition loss is given by
 \begin{equation}
     \begin{aligned}
                 \mathcal{J}_{\text{BC}}[\mathbf{v}_\theta] &:=  \frac{1}{M_b} \sum_{i=1}^{M_b} |u_\theta(t_{bc}^i, 1) - u_\theta(t_{bc}^i, 0) |^2.
     \end{aligned}
 \end{equation}
In classical PINN architectures (e.g. \cite{raissi2019physics}), one usually includes a term for the initial condition loss evaluated on sampled points $\{x_{ic}^i\}_{i=1}^{M} \subset \{t=0\} \times [0,1]$, given by
 \begin{equation}
     \begin{aligned}
                 \mathcal{J}_{\text{IC}}[\mathbf{v}_\theta] &:=  \frac{1}{M} \sum_{i=1}^M |\rho_\theta(0, x_{ic}^i) - \rho_0(x_{ic}^i) |^2 + \frac{1}{M} \sum_{i=1}^M |u_\theta(0, x_{ic}^i) - u_0(x_{ic}^i) |^2 \\[1ex]
        &+\frac{1}{M} \sum_{i=1}^M |\rho^\star_\theta(0, x_{ic}^i) - \rho^\star_0(x_{ic}^i) |^2.
     \end{aligned}
 \end{equation}
Since our initial condition is hard-coded into the output, we do not incorporate this into the loss $\mathcal{J}$. Let us now note that for cases 1 and 2 of Section \ref{sec:numerics}, a constant initial $\rho^\star_0$ was used. This implies that the solution $\rho^\star$ will be equal to $\rho_0^\star$ for positive times, due to the transport equation for  $\rho^\star$. Therefore, to encourage the network to learn the correct solution, we penalise variations in $\rho^\star$ for cases 1 and 2 by adding the following regularisation term to the loss function:
\begin{equation}
    \mathcal{J}_{\text{PEN}}[\mathbf{v}_\theta] := \frac{1}{M_p} \sum_{i=1}^{M_p} | \partial_t \rho_\theta |^2 (t_p^i, x_p^i) + \frac{1}{M_p} \sum_{i=1}^{M_p} | \partial_x \rho_\theta |^2 (t_p^i, x_p^i).
\end{equation}
In order to evaluate the loss function and compute gradients with respect to the parameters of the network, the automatic differentiation functionality of PyTorch is used. The parameters are then iteratively updated using an optimisation method. We train the network for $7500$ epochs. We use the Adams optimiser \cite{KingmaB14} for the first $5500$ epochs and finish using the L-BFGS optimiser \cite{nocedal1980updating,nocedal2006numerical} for the last $2000$ epochs. 

\subsubsection*{Numerical evaluation}
Once training is completed, the learned approximation $(t,x) \mapsto (\rho_\theta(t,x), u_\theta(t,x), \rho^\star_\theta(t,x))$ is evaluated on a fixed, deterministic time-space grid that is distinct from the collocation points used during training. We sample $N_x=100$ points in time and $N_t=100$ points in space, and produce a grid of $N_t \times N_x$ points covering the domain $[0,T] \times [0,1]$. The trained network is evaluated pointwise on this grid to produce arrays $[\rho_\theta(t_i, x_j)], [u_\theta(t_i,x_j)], [\rho_\theta^\star(t_i, x_j)]$ which are then used to produce the plots. 
\end{appendices}

\printbibliography
\end{document}